\renewcommand{\a}{\alpha}
\renewcommand{\b}{\beta}
\newcommand{\e}{\varepsilon}
\renewcommand{\l}{\lambda}
\renewcommand{\O}{\Omega}
\newcommand{\normeq}{\trianglelefteqslant}
\newcommand{\Sym}{\mathrm{Sym}}
\newcommand{\la}{\langle}
\newcommand{\ra}{\rangle}
\renewcommand{\to}{\rightarrow}
\newcommand{\leqs}{\leqslant}
\newcommand{\geqs}{\geqslant}
\newcommand{\vs}{\vspace{2mm}}
\newcommand{\imod}[1]{\allowbreak\mkern4mu({\operator@font mod}\,\,#1)}
\newtheorem{theorem}{Theorem}
\newtheorem*{conj*}{Conjecture}
\newtheorem{corol}[theorem]{Corollary}
\newtheorem{thm}{Theorem}[section]
\newtheorem{prop}[thm]{Proposition}
\newtheorem{lem}[thm]{Lemma}
\theoremstyle{definition}
\newtheorem{rem}[thm]{Remark}
\begin{document}

\author{Timothy C. Burness}
\address{T.C. Burness, School of Mathematics, University of Bristol, Bristol BS8 1UG, UK}
\email{t.burness@bristol.ac.uk}

  \author{Aner Shalev}
  \thanks{AS acknowledges the support of ISF grant 686/17 and the Vinik Chair of Mathematics which he holds. Both authors thank Joanna Fawcett and Michael Giudici for helpful comments on twisted wreath products. They also thank an anonymous referee for many insightful comments, corrections and suggestions on an earlier version of this paper.}
  \address{A. Shalev, Institute of Mathematics, Hebrew University, Jerusalem 91904, Israel}
\email{shalev@math.huji.ac.il}

\title[Permutation groups with restricted stabilizers]{Permutation groups with restricted stabilizers}
\dedicatory{Dedicated to the memory of our friend and colleague Jan Saxl}

\begin{abstract}
Fix a positive integer $d$ and let $\Gamma_d$ be the class of finite groups without sections isomorphic to the alternating group $A_d$. The groups in $\Gamma_d$ were studied by Babai, Cameron and P\'{a}lfy in the 1980s and they determined bounds on the order of a primitive permutation group with this property, which have found a wide range of applications. Subsequently, results on the base sizes of such groups were also obtained. In this paper we replace the structural conditions on the group by restrictions on its point stabilizers, and we obtain similar, and sometimes stronger conclusions. For example, we prove that there is a linear function $f$ such that the base size of any finite primitive group with point stabilizers in $\Gamma_d$ is at most $f(d)$. This generalizes a recent result of the first author on primitive groups with solvable point stabilizers. For non-affine primitive groups we obtain stronger results, assuming only that stabilizers of $c$ points lie in $\Gamma_d$.  We also show that if $G$ is any permutation group of degree $n$ whose $c$-point stabilizers lie in $\Gamma_d$, then $|G| \leqs ((1+o_c(1))d/e)^{n-1}$. This asymptotically extends and improves a $d^{n-1}$ upper bound on $|G|$ obtained by Babai, Cameron and P\'{a}lfy assuming $G \in \Gamma_d$.
\end{abstract}

\date{\today}

\maketitle
 \section{Introduction}\label{s:intro}

For $d \geqs 5$, let $\Gamma_d$ be the class of finite groups that have no section isomorphic to the alternating group $A_{d}$. A celebrated result of Babai, Cameron and P\' {a}lfy \cite{BCP} shows that there exists a function $f_1$ with the property that $|G| \leqs n^{f_1(d)}$ for every primitive permutation group $G \in \Gamma_d$
of degree $n$. This result has been useful in a wide range of applications. For example, the relevant condition on sections arises naturally in the study of subgroup growth and bounded generation of profinite groups (see \cite{Py,Sh97} for instance) and we refer the reader to \cite{Bab15, PPSS} for applications in algebraic graph theory.

Recall that the \emph{base size} $b(G)$ of a permutation group $G \leqs {\rm Sym}(\O)$ is the minimal cardinality of a subset of $\O$ whose pointwise stabilizer is trivial. Clearly, if $G$ has degree $n$, then $|G| \leqs n^{b(G)}$. By combining this observation with the above bound of Babai et al., it is natural to ask if there is a suitable function $f_2$ such that $b(G) \leqs f_2(d)$ for every primitive group $G \in \Gamma_d$. The existence of such a function was conjectured by Babai and proved by Gluck, Seress and Shalev \cite{GSSh}, with $f_2$ a quadratic function of $d$. This was subsequently improved by Liebeck and Shalev \cite{LSh} who showed that the result holds with a linear function $f_2$.

In this paper we seek analogous results for finite permutation groups, where we replace the structural conditions on the whole group by similar restrictions on its point stabilizers, thus investigating a larger class of groups. Our first result deals with the base size of primitive groups $G$ whose point stabilizers lie in $\Gamma_d$. In our second result we impose the same restriction on smaller subgroups, namely on point stabilizers of $c$ distinct points, and we do not assume that $G$ is primitive, or even transitive.

\begin{theorem}\label{t:main}
Let $d \geqs 5$ be an integer. Then there exists a linear function $f(d)$ such that $b(G) \leqs f(d)$ for every finite primitive permutation group $G$ with point stabilizers in $\Gamma_d$.
\end{theorem}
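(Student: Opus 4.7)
The plan is to apply the O'Nan--Scott theorem and treat each type of primitive group separately, combining the structural constraint on $H:=G_\alpha \in \Gamma_d$ with probabilistic base-size methods based on fixed-point ratios. Throughout, I follow the broad strategy used in the first author's recent work on primitive groups with soluble stabilizers, with the key difference that absolute bounds must be replaced by linear functions of $d$ to reflect the weaker hypothesis. The aim in each O'Nan--Scott family is to show that $b(G)$ is bounded by some linear function of $d$ depending only on the family; taking the maximum will then yield the uniform linear bound $f(d)$.

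In the affine case $G = V \rtimes H$ with $H \leqs GL(V)$ irreducible and $H \in \Gamma_d$, a standard Seress-type reduction gives $b(G) \leqs b(H,V) + 1$, so the task becomes bounding the base size of an irreducible linear group in $\Gamma_d$ linearly in $d$. This is obtainable by combining the polynomial bound on $|H|$ in terms of $|V|$ provided by the Babai--Cameron--P\'alfy theorem \cite{BCP} with the Liebeck--Shalev fixed-point ratio machinery from \cite{LSh}: for sufficiently many independent vectors, the expected number of elements of $H$ fixing them tends to zero at a rate controlled linearly by $d$.

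The almost simple case is the main workload. Write $T = \mathrm{soc}(G)$. If $T = A_n$ and the action is standard (intransitive or imprimitive), the stabilizer contains an $A_{n-k}$ or an $A_k$ section, and the $\Gamma_d$ hypothesis forces $n$ to be bounded linearly in $d$; the base size is then small by direct calculation. If the action is on partitions, or non-standard, the stabilizer is small relative to $|T|$ and probabilistic arguments of Cameron--Kantor type yield $b(G) \leqs c$. For $T$ classical, I walk through Aschbacher's classes: for subspace stabilizers (class $\mathcal{C}_1$), $H$ contains $\mathrm{SL}$ or $\mathrm{SU}$ of large subrank, and $\Gamma_d$ bounds the rank of $T$ linearly in $d$; other geometric classes are analogous; for the class $\mathcal{S}$ of near-simple maximal subgroups, $H$ is small in $|T|$ and fixed-point ratio estimates give a constant base size. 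Exceptional and sporadic socles are treated by standard base-size results, which are already bounded linearly in the rank.

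The product, diagonal, and twisted wreath cases all reduce to the almost simple analysis: in each, either the simple factor $T$ of the socle or the point stabilizer of the associated almost simple factor $L$ appears as a section of $H$, and hence lies in $\Gamma_d$; the top symmetric group action and any diagonal contribution then add only a mild correction to $b(G)$, for instance via the elementary estimate $b(L \wr S_k) \leqs \lceil \log_{|\Delta|} k \rceil + b(L)$ with $L$ acting on $\Delta$. The hardest step, as expected, is the almost simple classical case: the real work lies in converting the abstract ``no $A_d$ section'' condition on each Aschbacher class into a concrete \emph{linear}-in-$d$ bound on the natural dimension of $T$, which must be carried out class by class using detailed knowledge of the maximal subgroups of classical groups, together with careful bookkeeping of the alternating sections present in each.
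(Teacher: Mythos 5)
Your outline aligns roughly with the paper in the almost simple case, and the observation that the $\Gamma_d$ hypothesis on $H$ forces linear bounds on the rank of a classical socle is the right idea there. However, you miss the key simplification the paper uses for the affine and diagonal cases: since $\Gamma_d$ is closed under subgroups, quotients and extensions, the stabilizer condition $H \in \Gamma_d$ already forces $G \in \Gamma_d$ in those two types (the socle is abelian in the affine case, and $H \cong T.(G/N)$ gives $T, G/N \in \Gamma_d$ in the diagonal case), after which one simply invokes the Liebeck--Shalev linear bound for primitive groups in $\Gamma_d$ as a black box. Re-running the fixed-point ratio machinery on $(H,V)$ is plausible but needlessly heavy, and, more importantly, you never make the observation that the $\Gamma_d$ condition passes up to $G$.

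The serious gaps are in the product and twisted wreath types, where the proposal as written would not close. For product type $G \leqs L \wr P$ with $P \leqs S_k$ transitive, your estimate $b(G) \leqs \lceil \log_{|\Delta|} k \rceil + b(L)$ is not bounded by any function of $d$: the condition $P \in \Gamma_d$ does not bound $k$, since $P$ could be, say, a solvable transitive group of arbitrarily large prime degree. The correct replacement of $k$ is the distinguishing number $d(Q)$ of a residual stabilizer $Q \leqs P$ on $\{1,\ldots,k\}$, giving $b(G) \leqs \lceil \log_{|\Delta|} d(Q) \rceil + b(L)$; the theorem of Halasi--Podoski then supplies $d(Q) \leqs d$ since $Q \in \Gamma_d$. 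Without this ingredient the contribution from the top group is uncontrolled. For twisted wreath products, the socle $T^k$ is regular and the point stabilizer is a transitive $H \leqs S_k$, so there is no almost simple component $L$ and no product action; your proposed reduction simply does not apply. The paper instead combines the Halasi--Liebeck--Mar\'{o}ti bound $b(G) \leqs 2\log|G|/\log n + 24$ with the estimate $|H| \leqs d^{k-1}$ for a transitive $H \leqs S_k$ in $\Gamma_d$, yielding a bound of the form $2\log d/\log 60 + 26$, which is even sub-linear in $d$.
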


Notice that a linear upper bound is best possible. For example, if we view $G= {\rm PGL}_{m}(q)$ as a primitive permutation group on the set of $1$-dimensional subspaces of the natural module, then $b(G) = m+1$ and each point stabilizer is contained in $\Gamma_{cm}$ for some absolute constant $c$.

Next, we bound the order of an arbitrary permutation group $G$ on a set $\O$ of size $n$ whose point stabilizers lie in $\Gamma_d$.
In fact we prove a more general result, fixing any nonnegative integer $c$ and assuming that all pointwise stabilizers of subsets of size $c$ lie in $\Gamma_d$. Recall that if $\Delta$ is a subset of $\O$, then $G_{(\Delta)} = \bigcap_{\a \in \Delta}G_{\a}$ and $G_{\Delta}$ denote the pointwise and setwise stabilizer of $\Delta$, respectively. Note that if $\Delta$ is empty then we define $G_{(\Delta)} = G_{\Delta} = G$.

\begin{theorem}\label{t:3}
Let $c \geqs 0$ be an integer and let $\delta > 0$ be a real number. Then there exists $N(c,\delta) \in \mathbb{N}$ such that if $d \geqs N(c,\delta)$ and $G \leqs {\rm Sym}(\O)$ has degree $n$ with $G_{(\Delta)} \in \Gamma_d$ for every subset $\Delta \subseteq \O$ of size $c$, then $|G| \leqs ((1+\delta)d/e)^{n-1}$.
\end{theorem}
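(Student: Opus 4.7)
The plan is to induct on $n$, with the base case $n=1$ trivial; the inductive step reduces to the transitive case, then to the primitive case, and finally deals with primitive groups directly. For the intransitivity reduction, write $\O = \O_1 \sqcup \cdots \sqcup \O_k$ with $k \geqs 2$: then $G$ embeds in $\prod_i G^{\O_i}$, and the pointwise stabilizer in $G^{\O_i}$ of a $c$-subset of $\O_i$ is a quotient of the corresponding stabilizer in $G$, hence lies in $\Gamma_d$. Induction yields $|G| \leqs \prod_i ((1+\delta)d/e)^{n_i-1} = ((1+\delta)d/e)^{n-k}$, which is stronger than needed.

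For the imprimitivity reduction, fix a non-trivial block system $\Sigma$ with $k=|\Sigma|$ and blocks of size $m=n/k$, and use the wreath product bound $|G| \leqs |G^{\Sigma}|\cdot|G^{B}|^{k}$ for a fixed block $B$. The hypothesis passes to $G^{B}$ with the same $c$ since $G_{(\Delta)} \leqs G_{B}$ for non-empty $\Delta \subseteq B$, making $(G^{B})_{(\Delta)}$ a quotient of $G_{(\Delta)} \in \Gamma_d$. For the action on $\Sigma$, the setwise stabilizer $G_{\Delta'} = \bigcap_{B \in \Delta'} G_B$ of a $c$-subset $\Delta' \subseteq \Sigma$ contains a $\Gamma_d$-subgroup $G_{(\{\alpha_B\})}$ of index at most $m^c$; since $A_d$ has no proper subgroup of index less than $d$, a standard sections argument yields $G_{\Delta'} \in \Gamma_d$ whenever $d > m^c$, a condition that can be arranged by choosing the block system finely enough (and enlarging $N(c,\delta)$ to handle residual cases via the direct bound $|G^{\Sigma}| \leqs k!$). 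The two inductive bounds multiply to $((1+\delta)d/e)^{k-1+k(m-1)} = ((1+\delta)d/e)^{n-1}$.

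The primitive case is the crux. The crude estimate $|G| \leqs n^{c}d^{n-c-1}$, coming from Babai-Cameron-P\'alfy applied to $G_{(\Delta)} \in \Gamma_d$ acting on $\O \setminus \Delta$, falls short of the target by a factor of roughly $(e/(1+\delta))^{n-1}$. Reclaiming this factor amounts to sharpening the Babai-Cameron-P\'alfy bound from $d^{n-1}$ to $((1+\delta)d/e)^{n-1}$ in the primitive setting. I would obtain this by combining Theorem \ref{t:main}, which gives a linear base-size bound $b(G) \leqs f(d)$ (so $|G| \leqs n!/(n-b(G))!$), with an O'Nan-Scott case analysis: in each type (affine, almost simple, diagonal, product, twisted wreath) the structural bounds on $|G|$ together with Stirling's formula produce an estimate of shape $(C\cdot d/e)^{n-1}$, with $C$ forced down to $1+\delta$ by taking $N(c,\delta)$ large enough to absorb sub-exponential corrections.

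The main obstacle will be the critical regime $n \approx d$, where the target bound is essentially saturated: for instance, $G = S_d$ acting naturally on $d$ points has $|G|=d! \sim \sqrt{2\pi d}(d/e)^{d}$ with $G_\alpha = S_{d-1} \in \Gamma_d$, so the target $((1+\delta)d/e)^{d-1}$ only beats it by polynomial Stirling factors and the $(1+\delta)^{d-1}$ slack. Every O'Nan-Scott type must therefore be analysed with optimal constants, and $N(c,\delta)$ must grow fast enough with $\delta^{-1}$ and $c$ to absorb both the Stirling correction and the linear base-size factor $f(d)$ coming from Theorem \ref{t:main}.
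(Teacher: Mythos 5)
Your intransitive reduction (split into orbits, observe $G \leqs G_1 \times G_2$, apply induction on $n$) is correct and is exactly what the paper does. The other two reductions both have genuine problems.

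Your imprimitive reduction requires $d > m^c$ to transfer the $\Gamma_d$ hypothesis to $G^\Sigma$, but you cannot ``choose the block system finely enough'': the smallest available block size $m$ is an intrinsic feature of $G$ (it corresponds to the smallest subgroup of $G$ strictly containing a point stabilizer), and it can be arbitrarily large relative to $d$. The parenthetical remark about ``residual cases via $|G^\Sigma| \leqs k!$'' does not close this gap. The other half of the block reduction also has a minor issue: you need $m \geqs c$ to choose a $c$-subset inside a block $B$ when verifying the hypothesis for $G^B$.

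Your primitive-case plan is the more serious gap. First, Theorem \ref{t:main} assumes all $1$-point stabilizers are in $\Gamma_d$; for $c \geqs 2$ you only know that $c$-point stabilizers lie in $\Gamma_d$, which is strictly weaker, so Theorem \ref{t:main} does not apply. Second, even where it does apply, the bound $|G| \leqs n!/(n-b(G))!$ with $b(G)$ linear in $d$ does not obviously yield the sharp constant $(1+\delta)/e$ in the critical regime $n \approx d$; you have correctly identified the difficulty but sketched no resolution, and an O'Nan--Scott case analysis with ``optimal constants'' is a substantial undertaking.

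The idea you are missing is a \emph{second induction, on $c$}, which completely changes the shape of the argument and renders both problematic reductions unnecessary. For $c \geqs 1$, if $G$ is transitive, fix $\a \in \O$ and regard $H = G_\a$ as a permutation group on $\O \setminus \{\a\}$ of degree $n-1$: every $(c-1)$-point stabilizer of $H$ in this action is a $c$-point stabilizer of $G$, hence lies in $\Gamma_d$. The inductive hypothesis (on $c$, with a slightly smaller auxiliary parameter $\e$ satisfying $(1+\e)^{5/3}=1+\delta$) then bounds $|H|$, and $|G| = n|H|$ is controlled by a Stirling-type inequality $n^{3/2} \leqs (1+\e)^{n/e-1}$, valid once $d$ (and hence $n \geqs d$) is large. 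If $G$ is intransitive, induct on $n$ via orbits, as you did. The imprimitive and primitive cases are then reached only at $c = 0$, where $G \in \Gamma_d$ itself, so the hypothesis passes cleanly to both $G^\Sigma$ and $G^B$ (they are quotients of subgroups of $G$), and the primitive case with $n \geqs d$ is disposed of by the Praeger--Saxl bound: primitivity and $G \in \Gamma_d$ force $A_n \not\leqs G$, whence $|G| < 4^n < 5^{n-1} \leqs ((1+\e)d/e)^{n-1}$. Note this avoids $b(G)$ bounds and even CFSG.
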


It is easy to see that the number $N(c,\delta)$ in Theorem \ref{t:3} must genuinely depend on both $c$ and $\delta$. For example, let $n=c=2d$ and $G=S_n$. Then $|\Delta| = c$ implies $\Delta = \O$ and $G_{(\Delta)} = 1 \in \Gamma_d$.
But $|G| = n! = (2d)! > ((1+\delta)d/e)^{n-1}$ for all $\delta < 1$. In general, $N(c,\delta) \to \infty$ as $c \to \infty$ or as $\delta \to 1$. It is also worth noting that our proof of Theorem \ref{t:3} is independent of the Classification of Finite Simple Groups (see Remark \ref{r:cfsg}).

As an immediate corollary of Theorem \ref{t:3}, we get the following result. Indeed, parts (i) and (ii) are Theorem \ref{t:3} with $c=0$ and $c=1$, respectively. Part (iii) also follows from Theorem \ref{t:3} since $G_{(\Delta)} \leqs G_{\Delta}$.

\begin{corol}\label{t:2}
Let $\delta>0$ be a real number.
\begin{itemize}\addtolength{\itemsep}{0.2\baselineskip}
\item[{\rm (i)}] There exists $N_1(\delta) \in \mathbb{N}$ such that if $d \geqs N_1(\delta)$ and $G \in \Gamma_d$ is a permutation group of degree $n$, then $|G| \leqs ((1+\delta)d/e)^{n-1}$.

\item[{\rm (ii)}] There exists $N_2(\delta) \in \mathbb{N}$ such that if $d \geqs N_2(\delta)$ and $G \leqs {\rm Sym}(\O)$ has degree $n$ with $G_{\a} \in \Gamma_d$ for all $\a \in \O$, then $|G| \leqs ((1+\delta)d/e)^{n-1}$.

\item[{\rm (iii)}] For every integer $c \geqs 0$, there exists $N(c,\delta) \in \mathbb{N}$ such that if $d \geqs N(c,\delta)$ and $G \leqs {\rm Sym}(\O)$ has degree $n$ with $G_{\Delta} \in \Gamma_d$ for every subset $\Delta \subseteq \O$ of size $c$, then $|G| \leqs ((1+\delta)d/e)^{n-1}$.
\end{itemize}
\end{corol}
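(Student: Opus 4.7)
The plan is to deduce all three parts directly from Theorem \ref{t:3} by choosing the parameter $c$ appropriately and exploiting the closure of $\Gamma_d$ under taking subgroups. Specifically, I would first observe that $\Gamma_d$ is closed under subgroups: if $H \leqs G$ then every section of $H$ is a section of $G$, so $G \in \Gamma_d$ forces $H \in \Gamma_d$. This tiny observation is the only real content beyond Theorem \ref{t:3} itself.

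For part (i) I take $c=0$ and set $N_1(\delta) = N(0,\delta)$. The unique subset of $\O$ of size $0$ is $\emptyset$, and by the convention recalled just before Theorem \ref{t:3} we have $G_{(\emptyset)} = G$, so the hypothesis $G \in \Gamma_d$ matches the $c=0$ hypothesis of Theorem \ref{t:3} verbatim. For part (ii) I take $c=1$ and set $N_2(\delta) = N(1,\delta)$: the singletons $\Delta = \{\a\}$ satisfy $G_{(\Delta)} = G_{\a}$, so again the hypothesis translates directly. For part (iii), given $G_{\Delta} \in \Gamma_d$ for all $\Delta$ of size $c$, I use the inclusion $G_{(\Delta)} \leqs G_{\Delta}$ together with the subgroup-closure of $\Gamma_d$ noted above to conclude $G_{(\Delta)} \in \Gamma_d$, after which Theorem \ref{t:3} applies with the same constant $N(c,\delta)$.

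No genuine obstacle arises; the only thing to check carefully is the convention $G_{(\emptyset)} = G$ in (i) and the subgroup-closure used in (iii), both of which are immediate.
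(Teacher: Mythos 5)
Your proposal is correct and follows exactly the paper's own derivation: parts (i) and (ii) are Theorem \ref{t:3} with $c=0$ and $c=1$ (using the convention $G_{(\emptyset)}=G$), and part (iii) follows from the inclusion $G_{(\Delta)} \leqs G_{\Delta}$ together with the subgroup-closure of $\Gamma_d$.
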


Theorem \ref{t:3} and Corollary \ref{t:2} asymptotically extend and improve a useful bound of Babai, Cameron and P\' {a}lfy \cite[Lemma 2.2]{BCP}, which states that if $d \geqs 6$ then $|G| \leqs d^{n-1}$ for every permutation group $G \in \Gamma_d$ of degree $n$ (see also Lemma \ref{l:2} below).
Indeed, our assumption on $G$ in Theorem \ref{t:3} is weaker, while the conclusion is asymptotically stronger. Note that the upper bounds on $|G|$
in Theorem \ref{t:3} and Corollary \ref{t:2} are essentially best possible, as shown by the example $n=d$ and $G = S_d$.

\vs

A well known theorem of Seress \cite{Seress} states that $b(G) \leqs 4$ for every  finite primitive solvable group $G$ and this bound is best possible. This has very recently been extended by Burness \cite{Bur20}, who has proved that $b(G) \leqs 5$ for every finite primitive group with solvable point stabilizers (this bound is also optimal). Therefore, one can view Theorem \ref{t:main} as a natural generalization of the main result of \cite{Bur20}.

Our next result can be viewed as a further extension of \cite{Bur20} by imposing a condition on $2$-point stabilizers.

\begin{theorem}\label{t:4}
Let $G$ be a finite primitive non-affine permutation group with point stabilizer $H$ and assume every $2$-point stabilizer in $G$ is solvable. Then $b(G) \leqs 6$, and this bound is best possible. Moreover, if $G$ is almost simple, then $b(G) = 6$ if and only if $G = {\rm Sp}_{6}(2)$ and
$H = {\rm O}_{6}^{+}(2)$.
\end{theorem}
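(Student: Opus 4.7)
My plan is to proceed by the O'Nan--Scott classification of the primitive non-affine type of $G$. The socle of $G$ is of the form $T^k$ with $T$ a non-abelian simple group, so there are four cases to treat: almost simple, simple diagonal, product action (including compound diagonal) and twisted wreath product. I expect the almost simple case to be the main step and to contain the sharp example, while the remaining types should reduce to it.

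In the almost simple case, if the point stabilizer $H$ is solvable, then \cite{Bur20} already gives $b(G) \leqs 5 < 6$, so I may assume $H$ is non-solvable while $H \cap H^g$ is solvable for every $g \in G \setminus H$. This is a severe structural condition: the non-solvable composition factors of $H$ cannot persist under any non-trivial conjugation. Using the Aschbacher classification for classical $G$, the Liebeck--Seitz classification for exceptional $G$, and the ATLAS for sporadic $G$, together with fusion considerations, this condition should cut $(G,H)$ down to a short explicit list. For each candidate I would apply the Liebeck--Shalev probabilistic method: writing
\[
Q(G,t) \;\leqs\; \sum_{x \in \mathcal{P}(G)} |x^G| \cdot \fpr(x, G/H)^t,
\]
where $\mathcal{P}(G)$ denotes the set of prime-order elements of $G$, a sufficient condition for $b(G) \leqs t$ is $Q(G,t) < 1$. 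The fixed-point-ratio bounds of Burness--Liebeck--Shalev should give $Q(G,6) < 1$ in every case, and by checking more carefully when $Q(G,5) < 1$ fails, I would identify the finite list of potential sharp examples. The residual small cases can be resolved by direct computation in \textsc{Magma}, and the only surviving pair should be $(G,H) = ({\rm Sp}_6(2), {\rm O}_6^+(2))$, for which $b(G) = 6$ is already recorded in the base size literature for classical groups, the anomaly being explained by the exceptional isomorphism $\Omega_6^+(2) \cong A_8$.

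For the remaining non-affine types my strategy is to propagate the $2$-point stabilizer condition down to a constituent almost simple factor and then apply the previous step. In the simple diagonal case, the point stabilizer contains a full diagonal copy of $\Aut(T)$, so every $2$-point stabilizer contains a conjugate of the centralizer in $\Aut(T)$ of some non-trivial element of $T$; solvability of all such centralizers forces $T$ to lie in a very short list of simple groups, and these remaining cases are easily dispatched. In the product action case $G \leqs L \wr S_k$ on $\Delta^k$ with $L$ almost simple primitive on $\Delta$, the hypothesis forces both the $2$-point stabilizers of $L$ on $\Delta$ to be solvable (so the almost simple conclusion applies to $L$) and the top group $S_k$ to have solvable pair-stabilizers in its natural action, which bounds $k$. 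A standard base-size argument for product actions (Bailey--Cameron, Burness--Seress) then yields $b(G) \leqs 6$, and the inequality is strict since $k \geqs 2$ already produces extra slack. Twisted wreath products are handled analogously using standard structural results on the maximality of their point stabilizers. The main obstacle I anticipate is the classification step in the almost simple case, where low-dimensional classical groups require delicate ad hoc analysis to pin down exactly when the bound $b(G) = 6$ is attained, and I expect a certain amount of computer assistance will be needed to finish the small cases.
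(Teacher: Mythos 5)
Your overall strategy -- reduce via O'Nan--Scott and handle the almost simple case first -- matches the paper's, and your almost simple plan (use the existing base size literature to reduce to a list, then probabilistic/FPR estimates plus \textsc{Magma} for small cases) is a legitimate alternative to the paper's explicit subspace constructions, though for the standard classical subspace actions the paper does something materially different: rather than proving $b(G)\leqs 6$ via $Q(G,6)<1$, it exhibits two explicit subspaces whose common stabilizer is \emph{non-solvable}, so those cases are simply excluded by hypothesis, and only a small finite residue (handled by \textsc{Magma}) remains. You should flag that for subspace actions the base size is unbounded as a function of $\dim V$, so you cannot expect $Q(G,6)<1$ until the $2$-point stabilizer hypothesis has been used to bound the dimension; that step is the real content and your ``fusion considerations'' gloss over it.

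The concrete gap is in your diagonal type case. You propose to use the fact that a $2$-point stabilizer contains a copy of $C_{\Aut(T)}(s)$ for some $1\ne s\in T$, and then claim that solvability of all such centralizers ``forces $T$ to lie in a very short list of simple groups.'' That is the wrong lever: the condition does constrain $T$, but it does not bound the number of factors $k$, and $b(G)$ can grow with $k$. The paper's Proposition~\ref{p:fn3} attacks $k$, not $T$: if the top group $P_G$ is not $A_k$ or $S_k$ then $b(G)=2$ by Fawcett; if $P_G\supseteq A_k$ and $k\leqs |T|^2$ then the explicit bound from \cite{fawcett} already gives $b(G)\leqs 4$; and if $k>|T|^2$ then $N{:}A_k\leqs G$, and taking $g=(s,1,\ldots,1)$ shows $A_{k-1}\leqs H\cap H^g$, so the $2$-point stabilizer is non-solvable and this case does not occur. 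Your plan never bounds $k$ and therefore cannot close the diagonal case: even for a fixed $T$ with all centralizers solvable (e.g.\ $T=A_5$), you still have infinitely many diagonal groups as $k$ varies and no argument controlling them.

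Two smaller points. In the product action case, your claim that ``the top group $S_k$ [has] solvable pair-stabilizers in its natural action, which bounds $k$'' is not what the hypothesis gives you and is not how the paper argues; what you actually obtain is that the kernel-type subgroup $Q\leqs P$ (the pointwise stabilizer of $b(L)$ diagonal points of $\O$) is solvable, hence $d(Q)\leqs 5$, and this feeds into the product-action base size formula without bounding $k$. Moreover when $b(L)=6$ (which happens exactly for $L={\rm Sp}_6(2)$ on cosets of ${\rm O}_6^+(2)$) that formula only gives $b(G)\leqs 7$, and the paper needs an additional regular-orbit count (as in \cite{Bur20}) to bring it down to $6$; your remark that $k\geqs 2$ ``already produces extra slack'' does not address this.
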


We refer the reader to Remark \ref{r:fn} for comments on the affine groups excluded in Theorem \ref{t:4} (we are not aware of any exceptions to the bound $b(G) \leqs 6$).

Finally, we conclude by establishing the following extension of Theorem \ref{t:main} to $c$-point stabilizers in non-affine primitive groups.

\begin{theorem}\label{t:5}
Let $c,d$ be positive integers with $d \geqs 5$. Then there exists a function $g(c,d)$, linear in $c$ and $d$, such that $b(G) \leqs g(c,d)$ for every finite primitive non-affine permutation group $G$ with $G_{(\Delta)} \in \Gamma_d$ for all subsets $\Delta \subseteq \O$ of size $c$.
\end{theorem}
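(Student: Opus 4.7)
The plan is to deduce Theorem~\ref{t:5} from Theorem~\ref{t:main} through the following structural reduction lemma: there is an absolute constant $C$ such that whenever $G$ is a finite non-affine primitive permutation group on $\O$ satisfying $G_{(\Delta)} \in \Gamma_d$ for every $c$-subset $\Delta \subseteq \O$, every point stabilizer $H = G_\a$ lies in $\Gamma_{d+Cc}$. Granting this, Theorem~\ref{t:main} applied to $G$ with parameter $d' = d + Cc$ yields $b(G) \leqs f(d + Cc)$, so the linear function $g(c,d) = f(d+Cc)$ does the job.

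To prove the reduction, suppose for contradiction that $A_k$ is a section of $H$, witnessed by $L \trianglelefteq M \leqs H$ with $M/L \cong A_k$; the goal is to force $k \leqs d + Cc$. Since $G$ is primitive, $H$ (and hence $M$) acts faithfully on $\O \setminus \{\a\}$, giving an embedding $M \hookrightarrow \prod_i \Sym(O_i)$ over the $M$-orbits. The non-abelian simplicity of $A_k$ together with a standard composition-factor argument for direct products of symmetric groups forces $A_k$ to appear as a composition factor of $\Sym(O_i)$ for some $i$; in particular $|O_i| \geqs k$, and modding out by the kernel on $O_i$ one obtains a faithful transitive $A_k$-action on this orbit.

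Next I pick $c-1$ distinct points $\b_1, \dots, \b_{c-1} \in O_i$; by hypothesis $H^{*} := G_{(\{\a, \b_1, \dots, \b_{c-1}\})} \in \Gamma_d$. Setting $M^{*} = M \cap H^{*}$, the image $M^{*}L/L \leqs A_k$ is a subgroup whose index is controlled by the $A_k$-action on $O_i$, and the crux is to show that this subgroup still contains $A_{k-Cc}$. This then exhibits $A_{k-Cc}$ as a section of $H^{*} \in \Gamma_d$, forcing $k - Cc < d$ and hence $k < d + Cc$, as required.

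The main obstacle is precisely this last combinatorial step. The orbit $O_i$ may realise $A_k$ via any one of the faithful transitive actions (on $k$ points, on $r$-subsets with degree $\binom{k}{r}$, on tuples, etc.), and in the ``large'' cases a naive choice of $\b_j$ can leave only an alternating subgroup of size $\ll k - Cc$; for instance, generically-chosen $r$-subsets lead to a pointwise stabilizer that a priori only contains $A_{k/2^{c-1}}$, an exponential rather than linear loss. Overcoming this requires an adaptive choice of the $\b_j$, supported by the Liebeck--Praeger--Saxl classification of large subgroups of alternating groups: for $k$ sufficiently large every faithful transitive $A_k$-action is essentially one on (ordered or unordered) $r$-subsets, and by choosing the $\b_j$ so that the corresponding $r$-subsets share as many elements as possible one can force their joint pointwise stabilizer to be a Young subgroup $(S_{r_1} \times \cdots \times S_{r_s} \times S_{k - \sum r_j}) \cap A_k$ with $\sum r_j \leqs Cc$, producing the required $A_{k-Cc}$. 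The non-affine hypothesis is inherited from the application of Theorem~\ref{t:main} and from bypassing the separate reduction that affine groups would require via the linear representation of the point stabilizer.
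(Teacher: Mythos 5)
Your proposal hinges on a reduction lemma that is nowhere established: that $G_{(\Delta)} \in \Gamma_d$ for all $c$-sets $\Delta$ forces the point stabilizer $H$ to lie in $\Gamma_{d+Cc}$ for an absolute constant $C$. The sketch you give of this lemma has a genuine gap at exactly the point you flag as ``the main obstacle''. From $A_k$ being a composition factor of the image $M_i$ of $M$ on some orbit $O_i$ you do not obtain ``a faithful transitive $A_k$-action on this orbit'': $M_i$ is merely some transitive group having $A_k$ as a section, so the Liebeck--Praeger--Saxl description of transitive $A_k$-actions (or of small-index subgroups of $A_k$) does not apply to $O_i$, and there is no classification available of which transitive groups of unbounded degree involve $A_k$. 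Worse, for a point $\b \in O_i$ the image $M_\b L/L$ in $M/L \cong A_k$ is only constrained to have index at most $|O_i| \leqs n-1$, which is unbounded; nothing forces the stabilizer of your chosen $c-1$ points to meet the section in a Young-type subgroup, or in anything containing $A_{k-Cc}$. So the ``adaptive choice of the $\b_j$'' is not a technical refinement but the entire content of the lemma, and it is unproved.

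Beyond the sketch, the lemma itself is doubtful outside the almost simple case, and the paper's proof of Theorem \ref{t:5} is structured precisely to avoid any such claim. For product type groups, $H$ surjects onto the top group $P \leqs S_k$, so $H$ inherits all alternating sections of $P$; but the top part of a $c$-point stabilizer of $G$ is in general only a ``colouring stabilizer'' of the coordinate set (and for proper subgroups $G < L \wr P$ not even that is guaranteed to be large), so it is far from clear that the hypothesis bounds the alternating sections of $P$ linearly in $c,d$. Similarly, in the twisted wreath case $H \leqs S_k$ can be transitive of huge degree while the $c$-point stabilizers are stabilizers of points of $T^k$, and the paper (Proposition \ref{p:fn1}) never claims $H \in \Gamma_{d'}$; it only bounds $|L| < d^{k-1}$ for a $c$-point stabilizer $L$ via Lemma \ref{l:2} and feeds this into the bound \eqref{e:faw} on $b(G)$ in terms of $\log|G|/\log n$. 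The actual proof proceeds case by case through the O'Nan--Scott types: explicit geometric choices of $c$ points in the standard almost simple actions forcing inequalities such as $m-k \leqs c+d-2$ or $k \leqs d$ (Proposition \ref{p:fn2}), explicit elements giving an $A_{k-c+1}$ section in a $c$-point stabilizer for diagonal type (Proposition \ref{p:fn5}), and diagonal points plus distinguishing-number bounds (Theorem \ref{t:dist}, \eqref{e:prod2}, \eqref{e:R}) for product type (Proposition \ref{p:fn6}) --- not a reduction to Theorem \ref{t:main}. Note also that your argument never uses the non-affine hypothesis, whereas the authors explicitly leave the affine case open; if a clean reduction to Theorem \ref{t:main} of the kind you propose were available, that distinction would be hard to explain. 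As it stands, the proposal is a plausible-looking plan whose key lemma is both unproved and likely to require essentially the same case analysis as the paper, so it cannot be accepted as a proof.
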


We plan to study related problems, including the affine case of Theorems \ref{t:4} and \ref{t:5} in a subsequent paper.

\vs

In Section \ref{s:proof} we present proofs of Theorems \ref{t:main} and \ref{t:3}. The proofs of Theorems \ref{t:4} and \ref{t:5} are given in Sections \ref{s:proof2} and \ref{s:fn}, respectively. Our notation is standard. In particular, we adopt the notation from \cite{KL} for simple groups and we note that all logarithms in this paper are base two.

\section{Proof of Theorems \ref{t:main} and \ref{t:3}}\label{s:proof}

In this section, we prove Theorems \ref{t:main} and \ref{t:3}. We begin by recording some preliminary results.

\subsection{Preliminaries}\label{ss:prel}

\begin{lem}\label{l:1}
Let $d \geqs 5$ be an integer. The class $\Gamma_d$ is closed under subgroups, quotients and extensions.
\end{lem}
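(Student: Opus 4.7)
The plan is to unpack the definition: $G \in \Gamma_d$ means no subgroup $K \leqs G$ has a quotient $K/L$ isomorphic to $A_d$ (i.e., no \emph{section} of $G$ is $A_d$). With this reformulation, subgroups and quotients are essentially book-keeping exercises, and the extension case is the only one that uses any real group-theoretic content, namely the simplicity of $A_d$ (valid since $d \geqs 5$).

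For subgroups, if $H \leqs G$, then any section of $H$ is already a section of $G$, so $G \in \Gamma_d$ forces $H \in \Gamma_d$. For quotients, given $N \normeq G$, the correspondence theorem identifies subgroups and normal subgroups of $G/N$ with those of $G$ containing $N$, and any section $K/L$ of $G/N$ has the form $(\wtilde K/N)/(\wtilde L/N) \cong \wtilde K/\wtilde L$ with $N \leqs \wtilde L \normeq \wtilde K \leqs G$, hence is also a section of $G$.

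The substantive step is closure under extensions. Suppose $N \normeq G$ with $N,\, G/N \in \Gamma_d$, and assume for contradiction that $A_d \cong K/L$ for some $L \normeq K \leqs G$. Consider the normal subgroup $M := L(K \cap N)/L$ of $K/L \cong A_d$. Since $A_d$ is simple, either $M = 1$ or $M = K/L$. If $M = 1$, then $K \cap N \leqs L$, so the natural map $K \too KN/N \leqs G/N$ has kernel $K \cap N \leqs L$ and therefore descends to a surjection $K/L \twoheadrightarrow KN/(LN)/\!\!\sim$, exhibiting $A_d \cong K/L$ as a quotient of the subgroup $KN/N$ of $G/N$, contradicting $G/N \in \Gamma_d$. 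If instead $M = K/L$, then $L(K \cap N) = K$, and the second isomorphism theorem gives
\[
A_d \;\cong\; K/L \;=\; L(K \cap N)/L \;\cong\; (K \cap N)/\bigl(L \cap (K \cap N)\bigr),
\]
so $A_d$ is a section of $K \cap N \leqs N$, contradicting $N \in \Gamma_d$.

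The only delicate point is in the first case, where one needs to observe that the section of $G/N$ realizing $A_d$ is obtained by pushing $K$ into $KN/N$ and factoring out the image of $L$; the simplicity of $A_d$ is what forces the dichotomy on $M$ and makes the proof go through cleanly. No other obstacle is anticipated, since everything else reduces to standard isomorphism theorems.
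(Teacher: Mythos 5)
Your proof is correct and takes essentially the same approach as the paper: for extensions, you split into two cases via the normal subgroup $M = L(K\cap N)/L$ of the simple section $K/L \cong A_d$, which is precisely the kernel of the natural map to $KN/LN$ considered in the paper, and the two cases are resolved by the same applications of the isomorphism theorems. The subgroup and quotient cases are handled just as in the paper, with a bit more explicit bookkeeping.
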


\begin{proof}
The fact that $\Gamma_d$ is closed under subgroups and quotients is immediate from the definition. Now assume $G$ is a finite group with a normal subgroup $N$ such that both $N$ and $G/N$ are in $\Gamma_d$. Seeking a contradiction, suppose $L/K \cong A_d$ is a section of $G$. Consider the natural map $\varphi:L/K \to LN/KN$ and note that $\varphi$ is either trivial or injective. If $\varphi$ is trivial, then $L  = K(L\cap N)$ and $L/K \cong (L \cap N) / (K \cap N)$, which is a contradiction since $N \in \Gamma_d$. Similarly, if $\varphi$ is injective then $L/K \cong LN/KN$, which contradicts the fact that $G/N \in \Gamma_d$. The result follows.
\end{proof}

\begin{lem}\label{l:2}
Let $G$ be a permutation group of degree $n$ such that $G \in \Gamma_d$ with $d \geqs 2$. Then $|G| < d^{n-1}$.
\end{lem}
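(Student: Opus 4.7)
The plan is to prove Lemma \ref{l:2} by induction on the degree $n$, handling three cases according to the transitivity structure of $G$. The base case $n=2$ is immediate: $|G| \leqs 2 < d$ whenever $d \geqs 3$, and the hypothesis is vacuous for $d=2$ since $\Gamma_2 = \emptyset$ (every group admitting a section isomorphic to the trivial group $A_2$).

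For the intransitive case, if $G$ has orbits $\Omega_1, \ldots, \Omega_r$ with $r \geqs 2$, then $G$ embeds in $\prod_{i=1}^r G^{\Omega_i}$, and Lemma \ref{l:1} guarantees each quotient $G^{\Omega_i}$ lies in $\Gamma_d$. Applying the inductive hypothesis to each factor yields
\[
|G| \leqs \prod_{i=1}^r |G^{\Omega_i}| < \prod_{i=1}^r d^{|\Omega_i|-1} = d^{n-r} \leqs d^{n-2} < d^{n-1}.
\]

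For the imprimitive transitive case, I would fix a non-trivial block system $\mathcal{B}$ with $m \geqs 2$ blocks of size $k \geqs 2$. The action on $\mathcal{B}$ produces a quotient $\bar{G} \in \Gamma_d$ of degree $m$ with $|\bar{G}| < d^{m-1}$ by induction, while the kernel $K$ of this action embeds in $\prod_{B \in \mathcal{B}} K^B$, where each $K^B \in \Gamma_d$ has degree $k$ and hence $|K^B| < d^{k-1}$. Multiplying these bounds gives
\[
|G| = |\bar{G}|\cdot |K| < d^{m-1}\cdot d^{m(k-1)} = d^{mk-1} = d^{n-1}.
\]

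The main obstacle is the primitive case. The elementary bound $|G| = n|G_\alpha|$, with $G_\alpha$ acting faithfully on $\Omega\setminus\{\alpha\}$ and lying in $\Gamma_d$ by Lemma \ref{l:1}, only yields $|G| < n\cdot d^{n-2}$ via induction; this closes the argument when $n \leqs d$ but fails once $n > d$. To handle the remaining range I would invoke the structural theory of primitive permutation groups via the O'Nan--Scott theorem, exploiting the fact that every non-abelian composition factor of $G$ is a simple group in $\Gamma_d$ and thus has order bounded purely in terms of $d$. Combined with a recursive application of Lemma \ref{l:1} to control $|\mathrm{soc}(G)|$ and the quotient on top, together with the primary $n^{f_1(d)}$ estimate of \cite{BCP} to dominate the asymptotics for large $n$, this should close the remaining case, leaving only a finite explicit range of intermediate $n$ to verify directly.
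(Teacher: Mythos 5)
Your reductions for the intransitive and imprimitive cases are exactly the ones the paper uses, and your observation that the elementary bound $|G| = n|G_\a|$ closes the primitive case when $n \leqs d$ is consistent with the paper's handling of small degrees (there via $n! \leqs n^{n-1} < d^{n-1}$ for $n < d$). The problem is the primitive case with $n > d$, which is the actual content of the lemma, and your sketch for it does not work as stated. First, the assertion that every nonabelian composition factor of a group in $\Gamma_d$ "has order bounded purely in terms of $d$" is false: for example ${\rm L}_2(q)$ lies in $\Gamma_d$ for a fixed small $d$ and all $q$, and likewise many other simple groups of unbounded order avoid large alternating sections, so no O'Nan--Scott recursion based on bounded socle order can be set up this way. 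Second, the BCP bound $|G| \leqs n^{f_1(d)}$ is polynomial in $n$ with a $d$-dependent exponent, so it only beats $d^{n-1}$ once $n$ is large compared with $f_1(d)$ (roughly $n \log d \gtrsim f_1(d)\log n$); for $n$ just above $d$ it gives nothing, and the leftover range $d < n \leqs N(d)$ is not something you can "verify directly", since it grows with $d$ and contains infinitely many groups as $d$ varies. So the proposal has a genuine gap precisely where the lemma is nontrivial.

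The paper closes this case with a different input: once $n \geqs d$, the hypothesis $G \in \Gamma_d$ forces $G \ne A_n, S_n$, and then Mar\'oti's theorem gives an \emph{absolute} exponential bound for primitive groups not containing $A_n$, namely $|G| < 3^n \leqs 4^{n-1} \leqs d^{n-1}$ for $d \geqs 4$ and $n > 4$ (with the degree-$4$ case done by hand). The case $d = 3$ needs extra care, since $3^n$ does not beat $3^{n-1}$: there the paper uses that $|G|$ is coprime to $3$ and invokes Mar\'oti's Corollary 1.4 to get $|G| \leqs 2^{n-1}$ apart from one small exception. If you want to avoid CFSG you could instead use the Praeger--Saxl bound $|G| < 4^n$ (as the paper does in Proposition \ref{p:new}), but note this only suffices for large $d$, not for the statement of Lemma \ref{l:2} with $d = 3$ or $4$. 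The key idea your argument is missing is this replacement of the $d$-dependent polynomial bound by an absolute exponential bound for primitive groups excluding $A_n$.
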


\begin{proof}
This is essentially \cite[Lemma 2.2]{BCP}, except that we assume $d \geqs 2$ instead of $d \geqs 6$. In addition, it is worth noting that our proof uses a result from \cite{Maroti}, which relies on the Classification of Finite Simple Groups, whereas the proof of \cite[Lemma 2.2]{BCP} is CFSG-free.

First note that $\Gamma_2$ is empty, so there is nothing to prove for $d=2$. Now assume $d \geqs 3$. If $n < d$ then $|G| \leqs n! \leqs n^{n-1} < d^{n-1}$, so we may assume $n \geqs d$. This implies that $G \ne A_n, S_n$.

We prove the upper bound on $|G|$ by induction on $n$.
If $G$ has an orbit of length $k<n$ then the induction hypothesis gives $|G| < d^{k-1}\cdot d^{n-k-1} < d^{n-1}$. Similarly, if $G$ is transitive with blocks of imprimitivity of size $1<k<n$, then $|G| < (d^{k-1})^{n/k}\cdot d^{n/k-1} = d^{n-1}$.
It remains to deal with the case where $G$ is primitive.

Suppose $d=3$, so $|G|$ is indivisible by $3$. Since $G \ne A_n, S_n$,
\cite[Corollary 1.4]{Maroti} implies that either $|G| \leqs 2^{n-1}$, or $n=5$ and $G = {\rm AGL}_{1}(5)$. In both cases we have $|G| < 3^{n-1}$, as required. Finally, suppose $d \geqs 4$, so $n \geqs 4$.  If $n=4$ then $|G| \leqs |S_4| < d^3$. For $n > 4$, \cite[Corollary 1.2]{Maroti} gives $|G| < 3^n \leqs 4^{n-1} \leqs d^{n-1}$ and the proof is complete.
\end{proof}

\begin{lem}\label{l:3}
Suppose $G$ is a finite simple classical group with natural module $V$. If $G \in \Gamma_d$ with $d \geqs 5$, then $\dim V < kd$, where $k = 1$ if $G$ is linear or unitary, otherwise $k=2$.
\end{lem}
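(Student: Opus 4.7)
The plan is to prove the contrapositive: assuming $\dim V \geqs kd$, exhibit a subgroup of $G$ isomorphic to $A_d$, which is then a section, contradicting $G \in \Gamma_d$. For each family of classical groups I identify a decomposition of the natural module $V$ whose setwise stabiliser in $G$ contains a symmetric group $S_r$ of rank roughly $\dim V / k$, and then verify that $A_r$ survives reduction modulo the scalar centre.

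For the linear case $G = \mathrm{PSL}_n(q)$, the even permutation matrices lie in $\mathrm{SL}_n(q)$, and since $A_n$ is simple for $n \geqs 5$ and meets the scalar centre trivially, they project injectively into $G$. The unitary case $G = \mathrm{PSU}_n(q)$ is handled identically, as permutation matrices preserve the standard Hermitian form on $\mathbb{F}_{q^2}^{\,n}$. Hence $n \geqs d$ forces $A_d \leqs A_n \leqs G$, delivering the bound $k=1$.

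For the symplectic and orthogonal cases I would take an orthogonal decomposition of $V$ into pairwise isometric two-dimensional blocks (with a possible one-dimensional anisotropic remainder when $\dim V$ is odd); the setwise stabiliser is a wreath product whose top group $S_r$ injects into $G$ as block-permutation matrices, and modulo $\{\pm I\}$ the subgroup $A_r$ survives for $r \geqs 5$. This produces $A_n \leqs G$ in each of the following subcases: $\mathrm{PSp}_{2n}(q)$ and $P\Omega_{2n}^{+}(q)$ (using $n$ hyperbolic planes), $\Omega_{2n+1}(q)$ (using $n$ hyperbolic planes plus a one-dimensional anisotropic block), and $P\Omega_{2n}^{-}(q)$ with $n$ odd (using $n$ pairwise isometric anisotropic two-spaces, whose combined discriminant $(-1)^n = -1$ matches the minus type). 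In each of these subcases $n \geqs d$ immediately gives $A_d \leqs G$.

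The main obstacle is the residual subcase $G = P\Omega_{2n}^{-}(q)$ with $n$ even, where the above block-decomposition argument fails to produce $A_n$: a mixed decomposition of $n-1$ anisotropic blocks plus one hyperbolic plane exhibits only $A_{n-1}$, which is enough whenever $n \geqs d+1$, leaving just the edge case $n = d$ with $d$ even. For this I would embed $A_d$ into $\Omega_{2d}^{-}(q)$ via its $(d-1)$-dimensional reduced permutation representation together with a $(d+1)$-dimensional trivial orthogonal summand whose discriminant (or Dickson invariant when $q$ is even) is chosen so that the total quadratic form has minus type. The delicate step, which is really the heart of the argument, is to check that the resulting copy of $A_d$ sits inside the simple group $\Omega_{2d}^{-}(q)$ rather than in a non-trivial coset modulo $\Omega$; this comes down to computing the spinor norms (respectively Dickson invariants) of the images of transpositions and requires a finite case analysis according to the parity of $d$ and the residue of $q$ modulo $4$.
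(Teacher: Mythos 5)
Your overall plan coincides with the paper's: in each family, find a decomposition of $V$ whose stabiliser exhibits a large alternating group, then apply the bound $\dim V \geqs kd$ to contradict $G \in \Gamma_d$. The essential difference is that the paper works with \emph{sections}, not subgroups. The class $\Gamma_d$ is defined via sections, so it suffices to observe (citing \cite[Prop.~4.2.9]{KL} and its analogues for the other $\mathcal{C}_2$-subgroups) that the stabiliser $H$ of a suitable direct-sum or orthogonal decomposition has $S_r$ (or at least $A_r$) as a quotient, where $r$ is the number of isometric summands; one never needs to decide whether a genuine copy of $A_r$ sits inside $\Omega$ rather than $\mathrm{O}$. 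In your version this last concern is in fact vacuous for the same reason the paper can ignore it: for $r \geqs 5$ the group $A_r$ is simple, so any embedding $A_r \hookrightarrow \mathrm{O}(V)$ automatically has image in $\Omega(V)$ and projects faithfully to the simple group, because $\mathrm{O}/\Omega$ and the scalar centre are both abelian of small order. Thus the ``delicate step \dots\ computing spinor norms (respectively Dickson invariants)'' that you flag as the heart of the argument is not actually needed at all; and the paper gets a strictly stronger bound ($\dim V < d$) for the odd-dimensional orthogonal groups with $q$ odd by decomposing into $1$-spaces rather than $2$-spaces.

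That said, you have correctly put your finger on a genuine subtlety that the paper's two-sentence treatment glosses over: when $G = {\rm P\O}_{2n}^{-}(q)$ with $n$ even, $V$ admits no orthogonal decomposition into $n$ isometric nondegenerate $2$-spaces (the Witt types multiply to $+$), and for $q$ even there is no decomposition into nondegenerate $1$-spaces either, so the block-permutation argument caps out at $A_{n-1}$ and the borderline case $n=d$ must be handled separately. The paper's word ``appropriate'' is silently doing a lot of work here. However, your proposed repair for this residual case is not yet a proof. Two concrete problems: (i) the dimension of the irreducible constituent of the permutation module of $A_d$ over $\mathbb{F}_q$ is $d-1$ only when $p \nmid d$; when $p \mid d$ the fully deleted module has dimension $d-2$, so the complementary trivial summand changes dimension and the count $2d$ must be re-examined; (ii) the phrase ``Dickson invariant'' of the complementary summand is a category error (the Dickson invariant is a homomorphism $\mathrm{O} \to \mathbb{F}_2$, not an invariant of a form) -- what is really at stake is the Arf invariant of the quadratic form you place on the complement, and since $d$ is even both $d-1$ and $d+1$ are odd, which in characteristic $2$ forces degenerate bilinear radicals and does not naturally yield a minus-type $2d$-space without a further fix. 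So the sketch is a sensible idea, but as written it does not close the case it is designed to handle.
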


\begin{proof}
First assume $G = {\rm L}_{m}(q)$ and let $H$ be the stabilizer in $G$ of a direct sum decomposition of $V$ into $1$-spaces. Then  $A_m$ is a section of $H$ (see \cite[Proposition 4.2.9]{KL}, for example) and thus $m<d$ as required. A very similar argument applies if $G = {\rm U}_{m}(q)$ or $\O_m(q)$ (with $mq$ odd), working with an orthogonal decomposition of $V$ into nondegenerate $1$-spaces. Finally, if $G = {\rm PSp}_{m}(q)$ or ${\rm P\O}_{m}^{\pm}(q)$ then $m$ is even and by considering the stabilizer of an appropriate orthogonal decomposition of $V$ into nondegenerate $2$-spaces we deduce that $A_{m/2}$ is a section of $G$. This gives $m/2<d$ and the result follows.
\end{proof}

Let $R \leqs {\rm Sym}(\Delta)$ be a permutation group on a finite set $\Delta$ and recall that the \emph{distinguishing number} of $R$, denoted $d(R)$, is the minimal number of colours needed to colour the points in $\Delta$ in such a way that no nontrivial element of $R$ preserves the colouring. We will need the following theorem to handle product type groups.

\begin{thm}\label{t:dist}
Let $R \leqs {\rm Sym}(\Delta)$ be a permutation group on a finite set $\Delta$.
\begin{itemize}\addtolength{\itemsep}{0.2\baselineskip}
\item[{\rm (i)}] If $R$ is solvable, then $d(R) \leqs 5$.
\item[{\rm (ii)}] If $R \in \Gamma_d$ for some positive integer $d$, then $d(R) \leqs d$.
\end{itemize}
\end{thm}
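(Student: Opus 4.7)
The plan is to establish both parts by induction on $n = |\Delta|$, with target bound $k = 5$ for (i) and $k = d$ for (ii). The base case $n \leqs k$ is handled by assigning distinct colours to all points, which forces any colour-preserving element to fix $\Delta$ pointwise. For the inductive step with $n > k$, I would first reduce to the transitive case: if $R$ has orbits $\Delta_1, \ldots, \Delta_m$ of size less than $n$, the induced group $R_i = R|_{\Delta_i}$ is a quotient of $R$ and hence inherits the hypothesis by Lemma \ref{l:1}; by induction each $\Delta_i$ admits a $k$-colouring with trivial $R_i$-stabilizer, and assembling these yields a $k$-colouring of $\Delta$ with trivial $R$-stabilizer, since $R$ embeds faithfully in $\prod_i R_i$.

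To reduce the transitive case to the primitive case I would pass to a minimal nontrivial block system $\mathcal{B}$: the action $\bar R$ on $\mathcal{B}$ inherits the hypothesis and has smaller degree, so by induction admits a $k$-colouring with trivial stabilizer, which lifts to a block-constant $k$-colouring of $\Delta$. Any colour-preserving element then lies in the kernel $K$ of $R \to \bar R$, and I would refine the colouring within a single block $B$ by applying induction to the restriction $K|_B$, exploiting the normality of $K$ in $R$ and the transitivity of $R$ on blocks to propagate the refinement consistently using no extra colours. For the primitive case of part (i), Seress's theorem gives $b(R) \leqs 4$ for every finite solvable primitive $R$, and the elementary bound $d(R) \leqs b(R) + 1$ (colour each base point uniquely and give the remaining points one additional colour) yields $d(R) \leqs 5$ at once.

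For the primitive case of part (ii), the same elementary bound combined with the Liebeck--Shalev linear bound on base sizes of primitive groups in $\Gamma_d$ only delivers $d(R) \leqs Cd + 1$, which is weaker than the target $d(R) \leqs d$. To sharpen this I would pursue a counting argument: the number of $d$-colourings of $\Delta$ is $d^n$, while the number fixed by a nontrivial $g \in R$ is $d^{c(g)}$, where $c(g)$ is the number of cycles of $g$. Using Lemma \ref{l:2} to control $|R|$ and invoking sharp lower bounds on the minimal degree of primitive groups not containing $A_n$ to force $c(g) \leqs n - \mu(R)/2$, one aims to establish $\sum_{g \neq 1} d^{c(g)} < d^n$, whence some $d$-colouring has trivial stabilizer. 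I expect this primitive case of (ii) to be the main obstacle, since the direct counting is insufficient in the regime where $n$ is moderate relative to $d$, and one must supplement with an O'Nan--Scott case analysis of primitive groups in $\Gamma_d$ within the Babai--Cameron--P\'{a}lfy framework.
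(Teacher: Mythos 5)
The paper does not prove this theorem: both parts are quoted directly from the literature (part (i) is \cite[Theorem 1.2]{Seress} and part (ii) is \cite[Theorem 2.3]{HP}). Your proposal is therefore an attempt at a self-contained proof, and it is worth saying where it works and where it breaks down.

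Your intransitive reduction is sound: each orbit image $R_i$ inherits the hypothesis, $R$ embeds in $\prod_i R_i$, and assembling per-orbit distinguishing $k$-colourings gives a distinguishing $k$-colouring of $\Delta$. Your primitive case of (i) is also fine: $b(R)\leqs 4$ for primitive solvable $R$ by Seress, and $d(R)\leqs b(R)+1$ by colouring a base injectively and the complement with a fifth colour. The genuine gap is the transitive-to-primitive step. You claim to ``propagate the refinement consistently using no extra colours,'' but this is not justified and, as written, does not work. The standard way to combine a distinguishing colouring $c_1$ of the block system with a within-block colouring $c_2$ is to use the product colouring $\delta\mapsto(c_1(B),c_2^B(\delta))$, which requires $k^2$ colours, not $k$. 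If instead you try to overwrite the block-constant colouring on a single block with a refined colouring drawn from the same palette, an element of $R$ that preserves the new colouring need not preserve the induced colouring of $\mathcal B$ (the coarse and fine colourings interact), so you lose the conclusion that such an element lies in the kernel $K$. Making this work with no extra colours is exactly the technical content of the references cited, and your argument omits the key idea.

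You are also right to flag the primitive case of (ii) as an obstacle: the counting estimate $\sum_{1\neq g\in R}d^{c(g)}<d^{|\Delta|}$ is the right shape, but with only $|R|<d^{n-1}$ from Lemma \ref{l:2} and generic minimal-degree lower bounds it does not close in the regime where $n$ is comparable to $d$, and the needed $d(R)\leqs d$ is sharp (take $R=S_d$ on $d$ points). That case genuinely requires the finer analysis carried out by Halasi and Podoski, and your proposal does not supply it. So the proof has two gaps: the imprimitive reduction and the primitive case of (ii).
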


\begin{proof}
Part (i) is \cite[Theorem 1.2]{Seress} and part (ii) is \cite[Theorem 2.3]{HP}.
\end{proof}

\begin{thm}\label{t:gss}
There exists a linear function $g(d)$ such that $b(G) \leqs g(d)$ for every finite primitive permutation group $G$ in $\Gamma_d$.
\end{thm}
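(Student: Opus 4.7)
My plan is to apply the O'Nan-Scott theorem to reduce Theorem~\ref{t:gss} to the basic types of finite primitive permutation groups (almost simple, product type, diagonal, twisted wreath, affine) and to exhibit in each case a base of size linear in $d$.

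For an almost simple group $G$ with socle $T$, Lemma~\ref{l:1} gives $T \in \Gamma_d$, which severely restricts $T$: if $T = A_m$ then $m < d$; by Lemma~\ref{l:3}, classical socles have natural module of dimension strictly less than $2d$; exceptional Lie type socles have bounded rank; and only finitely many sporadic socles arise. Invoking the extensive existing literature on base sizes of almost simple primitive groups, one knows that $b(G)$ is either absolutely bounded (for the so-called non-standard actions) or grows linearly with the natural module dimension or the alternating degree $m$. Either way, the resulting bound is linear in $d$.

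In the product-type case we have $G \leqs L \wr P$ with $P \leqs S_k$ transitive and $L \leqs \Sym(\Delta)$ primitive, acting componentwise on $\Delta^k$. Since $P$ is a quotient of $G$, Lemma~\ref{l:1} yields $P \in \Gamma_d$, and Theorem~\ref{t:dist}(ii) furnishes a $d$-colouring of $\{1,\dots,k\}$ whose setwise stabilizer in $P$ is trivial. Inducting on $|G|$ gives a base for $L$ on $\Delta$ of linear size $b_0 = b_0(d)$. A standard concatenation argument combining this colouring with the $L$-base then yields a base for $G$ of size linear in $d$; the diagonal and twisted wreath subcases admit analogous wreath-type treatments exploiting the top group structure.

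The affine case is the main obstacle. Here $G = V \rtimes H$ with $V = \mathbb{F}_p^m$ and $H \leqs \mathrm{GL}(V)$ irreducible, and $b(G) \leqs b(H, V) + 1$, so it suffices to bound $b(H, V)$ linearly in $d$. Applying Aschbacher's theorem to classify the maximal overgroups of $H$ in $\mathrm{GL}(V)$, together with Lemma~\ref{l:3} applied to the composition factors of $H$, one bounds $m$ by a linear (or at worst polynomial) function of $d$. The final ingredient is a probabilistic argument in the spirit of Liebeck-Shalev: uniform fixed-point-ratio estimates of the form $\fpr(h, V) \leqs |V|^{-\alpha/d}$ for some absolute $\alpha > 0$ imply that a random tuple of length linear in $d$ is a base for $H$ on $V$ with positive probability. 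The technical difficulty lies in establishing such uniform fpr estimates across every Aschbacher class, particularly for tensor-induced representations and extraspecial-normalizer subgroups, where the interplay between $m$, $|H|$ and the irreducibility constraint becomes delicate.
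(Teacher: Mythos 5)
The paper's proof of this theorem is a two-sentence citation: the quadratic bound is the main theorem of \cite{GSSh}, and the linear refinement is \cite[Theorem 1.4]{LSh}. Your proposal instead attempts to re-derive the result from scratch via the O'Nan-Scott theorem, which is a genuinely different and far more ambitious route; the non-affine cases are sketched in a way that broadly parallels both those references and the present paper's proof of Theorem~\ref{t:main}, so the real question is the affine case, which you rightly flag as the main obstacle.

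There is a genuine error in that affine case. You assert that Aschbacher's theorem together with Lemma~\ref{l:3} applied to the composition factors of $H$ bounds $m = \dim V$ by a linear (or polynomial) function of $d$. This is false: $m$ is not bounded in terms of $d$ at all. For instance, a Singer cycle $H = \mathbb{F}_{p^m}^{\times}$ acting irreducibly on $V = \mathbb{F}_p^m$ is cyclic, hence lies in $\Gamma_5$, for every $m$ and every prime $p$. Lemma~\ref{l:3} controls the dimension of the natural module of a classical \emph{section} of $H$, which has little to do with the dimension of $V$ as an $H$-module; the two differ through field extensions, tensor constructions, extraspecial normalizers, and so on. Note also that $b(G) \leqs m+1$ trivially for any affine group, so if your claimed bound on $m$ were true the affine case would be nearly immediate; the whole point of the delicate work in \cite{GSSh} and \cite{LSh} is precisely that $m$ is unbounded and one must control $b(H,V)$ directly via counting and fixed-point-ratio arguments for each Aschbacher class. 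You gesture at this but explicitly defer it as a ``technical difficulty,'' so as written the affine case rests on a false structural claim and is a research programme rather than a proof.
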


\begin{proof}
The existence of a quadratic function $g(d)$ with this property is the main theorem of \cite{GSSh}.
In later work, the existence of a linear function was established in \cite[Theorem 1.4]{LSh}.
\end{proof}

\subsection{Proof of Theorem \ref{t:main}} We are now ready to prove Theorem \ref{t:main}. Fix a positive integer $d \geqs 5$ and let $G \leqs {\rm Sym}(\O)$ be a finite primitive permutation group of degree $n$ with point stabilizer $H \in \Gamma_d$. We proceed by considering the possibilities for $G$ given by the Aschbacher-O'Nan-Scott Theorem (see Table \ref{tab:prim}).

\begin{table}
\[
\begin{array}{ll} \hline
\mbox{Type} & \mbox{Description} \\ \hline
\mbox{I} & \mbox{Affine: $G = V{:}H \leqs {\rm AGL}(V)$, $H \leqs{\rm GL}(V)$ irreducible} \\
\mbox{II} & \mbox{Almost simple: $T \leqs G \leqs {\rm Aut}(T)$} \\
\mbox{III(a)(i)} & \mbox{Diagonal type: $T^k \leqs G \leqs T^k.({\rm Out}(T) \times P)$, $P \leqs S_k$ primitive} \\
\mbox{III(a)(ii)} & \mbox{Diagonal type: $T^2 \leqs G \leqs T^2.{\rm Out}(T)$} \\
\mbox{III(b)(i)} & \mbox{Product type: $G \leqs L \wr P$, $L$ primitive of type II, $P \leqs S_k$ transitive} \\
\mbox{III(b)(ii)} & \mbox{Product type: $G \leqs L \wr P$, $L$ primitive of type III(a), $P \leqs S_k$ transitive} \\
\mbox{III(c)} & \mbox{Twisted wreath product} \\ \hline
\end{array}
\]
\caption{The finite primitive permutation groups}
\label{tab:prim}
\end{table}

\begin{prop}\label{p:1}
Suppose $G$ is either an affine or diagonal type group. Then $G \in \Gamma_d$ and the conclusion to Theorem \ref{t:main} holds.
\end{prop}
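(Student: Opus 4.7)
The overall approach is to strengthen the hypothesis $H \in \Gamma_d$ into the much more useful statement $G \in \Gamma_d$; once this is in hand, Theorem \ref{t:gss} provides a linear function $g$ with $b(G) \leqs g(d)$, which is precisely the conclusion of Theorem \ref{t:main}. The engine for this reduction will be Lemma \ref{l:1}: since $\Gamma_d$ is closed under subgroups, quotients, and extensions, it suffices to assemble $G$ as an iterated extension of groups that are already known to lie in $\Gamma_d$.

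In detail, I would treat the two types separately. For the affine case, write $G = V{:}H$ with $V$ elementary abelian and $H \leqs {\rm GL}(V)$ irreducible. Since $V$ is abelian and $d \geqs 5$, every section of $V$ is abelian and so cannot contain $A_d$, giving $V \in \Gamma_d$ for free; combined with the hypothesis $H \in \Gamma_d$ and the extension clause of Lemma \ref{l:1}, this forces $G \in \Gamma_d$. For the diagonal case, let $N = {\rm soc}(G) = T^k$ with $T$ nonabelian simple. Reading off the structure of the point stabilizer from the O'Nan--Scott description, $H \cap N$ is a diagonal subgroup of $N$ isomorphic to $T$, while $H/(H \cap N)$ embeds into ${\rm Out}(T) \times P$ (with $P$ as in Table \ref{tab:prim}, and $P$ trivial or $S_2$ in case III(a)(ii)). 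Since $H \in \Gamma_d$, Lemma \ref{l:1} immediately yields $T, P \in \Gamma_d$; iterating the extension clause $k-1$ times gives $N = T^k \in \Gamma_d$; and because ${\rm Out}(T)$ is solvable by the Schreier conjecture (a consequence of CFSG), it lies in $\Gamma_d$, whence ${\rm Out}(T) \times P \in \Gamma_d$ and therefore $G/N \in \Gamma_d$. A final application of Lemma \ref{l:1} to the extension $1 \to N \to G \to G/N \to 1$ delivers $G \in \Gamma_d$, and Theorem \ref{t:gss} then supplies the linear base-size bound.

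The only mildly delicate step is the structural assertion about the point stabilizer in the diagonal case: one needs the O'Nan--Scott theorem to guarantee that $T$ and $P$ appear as sections of $H$ in the precise form required above, but this is a standard and well-documented calculation. Everything else is formal: the affine case is essentially immediate, and once $G \in \Gamma_d$ is confirmed there is no new base-size analysis to perform, as Theorem \ref{t:gss} does all the work. Note also that the appeal to Schreier for the solvability of ${\rm Out}(T)$ uses CFSG, which is consistent with the paper's comment that only the proof of Theorem \ref{t:3} is intended to be CFSG-free.
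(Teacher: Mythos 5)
Your proposal is correct, and the overall strategy is the same as the paper's: use Lemma \ref{l:1} to upgrade $H \in \Gamma_d$ to $G \in \Gamma_d$ and then invoke Theorem \ref{t:gss}. The affine case matches the paper's argument. In the diagonal case, however, you take a slightly longer route than necessary. After noting $H \cap N \cong T$ and $H/(H \cap N) \into \Out(T) \times P$, you argue that $\Out(T) \times P \in \Gamma_d$ by combining $P \in \Gamma_d$ with $\Out(T)$ solvable (via the Schreier conjecture, hence CFSG), and only then deduce $G/N \in \Gamma_d$. This appeal to Schreier is superfluous: since $G$ is primitive and $N = {\rm soc}(G)$ is transitive, $G = NH$, so $G/N \cong H/(H \cap N)$ is already a \emph{quotient} of $H$, and quotient-closure of $\Gamma_d$ gives $G/N \in \Gamma_d$ directly. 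This is exactly what the paper does, using the observation $H \cong T.(G/N)$ to read off $T \in \Gamma_d$ (as a normal subgroup of $H$) and $G/N \in \Gamma_d$ (as a quotient of $H$) simultaneously. Both proofs then finish identically, building $N = T^k \in \Gamma_d$ by iterated extensions and assembling $G$ from $N$ and $G/N$. So: correct, same approach, but you can drop the Schreier step entirely by using $G/N \cong H/(H\cap N)$.
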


\begin{proof}
In view of Theorem \ref{t:gss}, it suffices to show that $G \in \Gamma_d$. Let $N=T^k$ denote the socle of $G$, where $T$ is a simple group. If $G$ is affine then $G$ is an extension of $N$ by $H$ and the result follows from Lemma \ref{l:1} since $N$ is abelian. Now assume $G$ is a diagonal type group, so  $T$ is nonabelian and $k \geqs 2$. Here $N \normeq G \leqs N.({\rm Out}(T) \times S_k)$, $n = |T|^{k-1}$ and $H \cong T.(G/N)$. Since $H \in \Gamma_d$, it follows that $T \in \Gamma_d$ and $G/N \in \Gamma_d$. Therefore, both $N$ and $G/N$ are in $\Gamma_d$ and we conclude that $G \in \Gamma_d$ as required.
\end{proof}

\begin{prop}\label{p:tw}
The conclusion to Theorem \ref{t:main} holds if $G$ is a twisted wreath product.
\end{prop}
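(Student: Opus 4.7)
The plan is to exploit the regular action of the socle together with the combined constraints on $H$, the simple factor $T$, and the permutation action on coordinates. Let $N = T^k$ denote the socle of $G$, with $T$ nonabelian simple and $k \geqs 2$, and identify $\O$ with $N$ via the regular action, with $1 \in N$ corresponding to a fixed base point. A short calculation in $G = N \rtimes H$ shows that the pointwise stabilizer of $\{1, n_1, \ldots, n_r\} \subseteq N$ equals $\{h \in H : n_i^h = n_i \text{ for all } i\}$, where $H$ acts on $N$ by conjugation. Hence it suffices to exhibit $n_1, \ldots, n_r \in N$, with $r$ linear in $d$, whose joint conjugation stabilizer in $H$ is trivial.

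The conjugation action of $H$ on $N$ permutes the simple direct factors $T_1, \ldots, T_k$ transitively via a surjection $\pi \colon H \to P \leqs S_k$; set $K = \ker \pi$, and note that this action is faithful (since $N$ is self-centralizing in $G$). By Lemma \ref{l:1}, $P \in \Gamma_d$, and Theorem \ref{t:dist}(ii) yields a coloring $c \colon \{1,\ldots,k\} \to \{1,\ldots,d\}$ preserved by no nontrivial element of $P$. Picking $d$ distinct elements $t_1, \ldots, t_d \in T$ and letting $n_1$ be the element of $N$ with $i$-th coordinate $t_{c(i)}$, the $H$-stabilizer of $n_1$ is forced into $K$. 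Moreover, it is standard that in a primitive twisted wreath product the defining twisting homomorphism $\phi \colon H_0 \to \operatorname{Aut}(T)$ satisfies $\phi(H_0) \supseteq \operatorname{Inn}(T)$, so $T$ appears as a section of $H$ and hence $T \in \Gamma_d$; in particular, by Lemma \ref{l:3}, $T$ has Lie rank linearly bounded in $d$ in the classical case, with analogous constraints in the other cases.

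It remains to kill $K$ by adding linearly many more elements of $N$, and this is where the main obstacle lies. Since $K$ embeds into $\prod_{i=1}^k \operatorname{Aut}(T_i)$ and acts diagonally on $N$, one would ideally find a subset $S \subseteq T$ of linear-in-$d$ size that simultaneously serves as a base for the action on $T_i$ of each projection $\pi_i(K) \leqs \operatorname{Aut}(T_i)$; then the constant tuples $(s,s,\ldots,s)$ with $s \in S$ would form a base for $K$ on $N$. Finding such a uniform $S$ is delicate because a naive union across coordinates yields a $k$-dependent bound. The strategy is to exploit the $P$-transitivity of the coordinates together with the fact that each $\pi_i(K) \in \Gamma_d$ (as a quotient of a subgroup of $H$), applying Theorem \ref{t:gss} to an appropriate primitive action of $\pi_1(K)$ to bound its base size on $T_1$ linearly in $d$, and then transporting this base across the coordinates via conjugation by elements of $H$ realising the $P$-action. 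Combining the base point $1$, the distinguishing-coloring point $n_1$, and these additional points yields a base for $G$ of total size linear in $d$.
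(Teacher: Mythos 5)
Your central step---forcing the $H$-stabilizer of $n_1$ into $K$ via a distinguishing colouring---does not work, and the reason is the \emph{twisting} which is the defining feature of these groups. Writing the conjugation action of $h \in H$ on $N = T^k$, the $\pi(h)(i)$-th coordinate of $n_1^h$ equals $\alpha_{h,i}(t_{c(i)})$ for certain (generally nontrivial) automorphisms $\alpha_{h,i} \in \operatorname{Aut}(T)$ determined by the twist $\phi$. Thus $h$ can fix $n_1$ with $\pi(h) \ne 1$ provided each $\alpha_{h,i}$ sends $t_{c(i)}$ to $t_{c(\pi(h)(i))}$; the distinguishing colouring on $\{1,\ldots,k\}$ alone does not prevent this. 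To rule it out you would need to pick $t_1,\ldots,t_d$ lying in $d$ \emph{distinct $\operatorname{Aut}(T)$-orbits}, but no such choice need exist: for instance $T = A_5$ has only four $\operatorname{Aut}(T)$-orbits, so the construction already fails for $d \geqs 5$. There is also a structural warning sign: since $H \leqs S_k$ acts faithfully on the coordinates (as the paper sets up twisted wreath products), your $\pi$ is injective and $K = 1$, so if the claim held you would get $b(G) \leqs 2$ for every such $G$, which is false---primitive twisted wreath products with $H$ imprimitive can have base size much larger than $2$ (this is exactly why $K=1$ makes your third paragraph vacuous, yet you still cannot conclude). The final paragraph acknowledges an unresolved ``main obstacle'' and gestures at a strategy rather than carrying it out, so even setting aside the $n_1$ issue the argument is incomplete.

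The paper's proof is much shorter and sidesteps all of this. It invokes the general bound $b(G) \leqs 2\log|G|/\log n + 24$ from \cite[Section 4.4]{HLM} for twisted wreath products, together with $n = |T|^k$ and $|G| = |T|^k |H|$, and then applies Lemma \ref{l:2} to the transitive permutation group $H \leqs S_k$ (which lies in $\Gamma_d$ by hypothesis) to get $|H| \leqs d^{k-1}$. Plugging in gives $b(G) < 2\log d / \log 60 + 26$, a constant bound for each $d$, with no explicit base construction required. You may find it instructive that no distinguishing-colouring argument on the coordinate set is needed here at all; bounding $|H|$ and citing the logarithmic base-size bound does the entire job.
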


\begin{proof}
Let $N=T^k$ be the socle of $G$, where $T$ is a nonabelian simple group. Then $G = NH$ is a split extension, $N$ is regular and $H \leqs S_k$ is transitive. Therefore, $n=|T|^k$ and as explained in \cite[Section 4.4]{HLM}, we have
\[
b(G) \leqs 2\frac{\log |G|}{\log n} + 24 = \frac{2\log |H|}{k\log |T|}+26.
\]
Now Lemma \ref{l:2} gives $|H| \leqs d^{k-1}$ and thus
\[
b(G) \leqs \frac{2(k-1)\log d}{k \log |T|}+26 < 2\frac{\log d}{\log |T|}+26 \leqs 2\frac{\log d}{\log 60}+26.
\]
The result follows.
\end{proof}

\begin{rem}\label{r:tw}
In the proof of Proposition \ref{p:tw}, we work with the bound
\[
b(G) \leqs 2\frac{\log |G|}{\log n} + 24
\]
from \cite[Section 4.4]{HLM}. We thank Joanna Fawcett (personal communication) for pointing out to us that the stronger bound
\begin{equation}\label{e:faw}
b(G) \leqs \left\lceil \frac{\log |G|}{\log n} \right\rceil+ 3 < \frac{\log |G|}{\log n} + 4
\end{equation}
holds, which will be important in the proof of Proposition \ref{p:tw2}. To see this, we simply combine the bounds
\[
b(G) \leqs \left\lceil \frac{\log d(H)}{\log |T|}\right\rceil+3
\]
and $d(H) \leqs 48\sqrt[k]{|H|}$ from \cite[Theorem 1.3]{fawcett_tw} and \cite[Theorem 1.2]{DHM}, with the observation that $|T| \geqs 60$.
\end{rem}

\begin{prop}\label{p:as}
The conclusion to Theorem \ref{t:main} holds if $G$ is almost simple.
\end{prop}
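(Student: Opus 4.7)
The strategy is a case analysis on the socle $T$ of $G$, reducing to known base-size bounds in each case. The guiding dichotomy is: either $G \in \Gamma_d$, in which case Theorem \ref{t:gss} immediately delivers a linear bound on $b(G)$, or $T \notin \Gamma_d$, which sharply constrains $T$.

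For the first horn I would use that ${\rm Out}(T)$ is solvable for every nonabelian finite simple group $T$ (Schreier's conjecture), hence ${\rm Out}(T) \in \Gamma_d$ for $d \geqs 5$; combined with Lemma \ref{l:1} this shows that $T \in \Gamma_d$ implies $G \in \Gamma_d$. So assume $A_d$ is a section of $T$. Since the sporadic groups and the exceptional groups of Lie type have bounded order and bounded Weyl groups respectively, there is an absolute constant $d_0$ such that every such $T$ lies in $\Gamma_d$ whenever $d \geqs d_0$, and the finitely many remaining small cases are dispatched by existing absolute base-size bounds for almost simple primitive groups.

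The substantive cases are therefore $T = A_m$ with $m \geqs d$, and $T$ classical with natural module of ``large'' dimension. If $T = A_m$, I would invoke the O'Nan--Scott classification of maximal subgroups of $S_m$. For intransitive stabilizers, the $A_{\max(k,m-k)}$ section of $(S_k \times S_{m-k}) \cap G$ forces $m < 2d$, and the known base-size bounds for the subset action (which are linear in $m$) suffice. For imprimitive stabilizers $(S_a \wr S_b) \cap G$ with $m = ab$, both $a$ and $b$ are less than $d$, and the logarithmic-in-degree base-size bounds for partition actions yield a linear-in-$d$ estimate. For primitive-subgroup stabilizers, an absolute base-size bound applies. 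For $T$ classical of dimension $m$, I would proceed through the Aschbacher classes: in each geometric class $\mathcal{C}_i$, the typical maximal subgroup $H$ contains a Levi or wreath factor of the form ${\rm GL}_k(q^s) \times {\rm GL}_{m-k}(q^s)$ or ${\rm GL}_k(q) \wr S_r$, producing alternating sections $A_k$, $A_{m-k}$ or $A_r$ inside $H$. The hypothesis $H \in \Gamma_d$ then forces the associated parameter to be less than $d$ and bounds $m$ by at most a quadratic function of $d$; the known base-size bounds for the corresponding action of $T$ (linear or logarithmic according to the class) then translate this into a linear-in-$d$ bound on $b(G)$. The almost simple class $\mathcal{S}$ is dispatched with absolute base-size bounds.

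The main obstacle, in my view, is the $\mathcal{C}_2$-type stabilizers for classical socles and the imprimitive case for alternating socles, where the naive bound on the ``size'' of $T$ is quadratic rather than linear in $d$; maintaining a linear dependence on $d$ in the final estimate genuinely requires the logarithmic-in-degree base-size results available for wreath-type actions, rather than a crude linear-in-$m$ bound. A uniform treatment amounts to matching each Aschbacher class of $H$ with the sharpest available base-size theorem in the literature.
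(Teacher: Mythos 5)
Your strategy is correct in outline and would yield a valid proof, but it takes a genuinely different route from the paper. The paper's organizing principle is the standard/nonstandard dichotomy from \cite{BLSh}: every nonstandard almost simple primitive group satisfies $b(G) \leqs 7$, which at a stroke dispatches the sporadic and exceptional socles, the alternating socles with primitive point stabilizer, and every classical socle whose stabilizer lies in one of the Aschbacher classes $\mathcal{C}_2, \ldots, \mathcal{C}_8$ or $\mathcal{S}$. You instead begin with the dichotomy ``$G \in \Gamma_d$ or $T \notin \Gamma_d$'' via Schreier and Lemma \ref{l:1} — a legitimate shortcut the paper does not take, which kills the first horn via Theorem \ref{t:gss} — and then, in the second horn, run through the Aschbacher classification of $H$ case by case. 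That works, but it rediscovers what the standard/nonstandard split packages cleanly, and your stated worry about the $\mathcal{C}_2$ case is actually unfounded: for classical socles these are nonstandard actions, so $b(G) \leqs 7$ holds independently of $m$, and the quadratic constraint $m < d^2$ is never used. The only place a logarithmic-in-degree base-size result is genuinely required is the partition action of $A_m$ (where $k$ and $m/k$ are each bounded by $d$), and that is exactly where the paper invokes \cite{BCN}. In the end both routes funnel into the same two substantive estimates: a linear-in-$d$ bound on $\dim V$ for classical subspace actions (Lemma \ref{l:3}, giving $m < 4d$, fed into the $b(G) \leqs m + O(1)$ bounds of \cite{HLM}), and the \cite{Halasi}, \cite{BCN} bounds for the subset and partition actions of $A_m$.
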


\begin{proof}
Let $G_0$ denote the socle of $G$, so $G_0 \normeq G \leqs {\rm Aut}(G_0)$ and $G_0$ is a nonabelian simple group. Recall that $G$ is \emph{standard} if $G_0 = A_m$ and $\O$ is a set of subsets or partitions of $\{1, \ldots, m\}$, or $G_0$ is a classical group acting on an orbit of subspaces (or pairs of subspaces of complementary dimension) of the natural module $V$ for $G_0$. In all other cases, $G$ is said to be \emph{nonstandard}. By the main theorem of \cite{BLSh} we have $b(G) \leqs 7$ for all nonstandard groups, so for the remainder we may assume $G$ is standard.

First assume $G_0 = A_m$ is an alternating group with $m \geqs 7$ and $G$ is standard, so either
\begin{itemize}\addtolength{\itemsep}{0.2\baselineskip}
\item[{\rm (a)}] $H = (S_k \times S_{m-k}) \cap G$ is intransitive, where $1 \leqs k < m/2$; or
\item[{\rm (b)}] $H = (S_k \wr S_{m/k}) \cap G$ is imprimitive, where $1< k \leqs m/2$ and $k$ divides $m$.
\end{itemize}
In (a), a result of Halasi \cite[Corollary 4.3]{Halasi} gives
\[
b(G) \leqs \left\lceil \log_{\lceil m/k \rceil}m\right\rceil\left(\left\lceil m/k \right\rceil - 1\right) \leqs m-1
\]
and the result follows since the hypothesis $H \in \Gamma_d$ implies that $m \leqs 2d$. Now consider case (b). Here $k$ and $m/k$ are both at most $d$ and \cite[Theorem 4]{BCN} gives
\[
b(G) \leqs \max\left\{6, \left\lceil \log_{m/k} k \right\rceil+3\right\} \leqs \log d + 4
\]
for $k \geqs 3$. On the other hand, if $k=2$ and $m \geqs 8$ then $b(G) \leqs 3$ (see \cite[Remark 1.6]{BGS}). In both cases, the desired result follows.

To complete the proof of the proposition, we may assume $G_0$ is a finite simple classical group over $\mathbb{F}_q$ with natural module $V$ of dimension $m$. Since we are assuming $G$ is standard, either
\begin{itemize}\addtolength{\itemsep}{0.2\baselineskip}
\item[{\rm (a)}] $H$ is the stabilizer in $G$ of a subspace (or pair of subspaces) of $V$; or
\item[{\rm (b)}] $G_0 = {\rm Sp}_{m}(q)$, $q$ is even and $H \cap G_0 = {\rm O}_{m}^{\pm}(q)$.
\end{itemize}
First consider the cases arising in (a). Here the structure of $H$ is given in \cite[Section 4.1]{KL} and by applying Lemma \ref{l:3} we deduce that $m<4d$. By combining Theorem 3.3 and Proposition 3.5 in \cite{HLM}, and appealing to the proof of \cite[Theorem 3.1]{HLM}, we deduce that $b(G) \leqs m+14< 4d+14$ and the result follows. Similarly, in (b) we have $b(G) \leqs m+4 < 2d+4$ (see \cite[Section 3.3]{HLM}).
\end{proof}

In order to complete the proof of Theorem \ref{t:main}, we may assume that $G$ is a primitive group of product type.

\begin{prop}\label{p:prod}
The conclusion to Theorem \ref{t:main} holds if $G$ is a product type group.
\end{prop}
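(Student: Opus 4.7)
The plan is to write $G \leqs L \wr P$ acting in product action on $\Omega = \Delta^k$, where $L \leqs \mathrm{Sym}(\Delta)$ is primitive of type II or III(a) and $P \leqs S_k$ is transitive. The proof then combines three ingredients: a linear upper bound on $b(L)$ from the already-handled cases, a bound on the distinguishing number $d(P)$ from Theorem \ref{t:dist}, and a standard base-size bound for product-action groups in terms of $b(L)$ and $d(P)$. Fix $\delta \in \Delta$ and set $\omega = (\delta, \ldots, \delta) \in \Omega$, so that $H = G_\omega \leqs L_\delta \wr P$.

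First I would deduce from Lemma \ref{l:1}, together with a short structural argument using that $H$ projects onto $P$ and onto $L_\delta$ in each coordinate (as is forced by the product-type structure of a primitive $G$), that both $L_\delta$ and $P$ lie in $\Gamma_d$. Theorem \ref{t:dist}(ii) then gives $d(P) \leqs d$. Since $L$ is primitive of type II or III(a) with point stabilizer $L_\delta \in \Gamma_d$, Propositions \ref{p:1} and \ref{p:as} yield $b(L) \leqs f_0(d)$ for some linear function $f_0$.

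To conclude I would invoke a standard bound for primitive groups of product type of the form
\[
b(G) \leqs b(L) + \lceil \log_{|\Delta|} d(P) \rceil + C,
\]
where $C$ is an absolute constant. The underlying idea is that a choice of $c$ points in $\Delta^k$ corresponds to a colouring of $\{1, \ldots, k\}$ by $c$-tuples from $\Delta$, and the pointwise stabilizer in $L \wr P$ is trivial precisely when each coordinate "column" is a base for $L$ on $\Delta$ and the induced colouring of $\{1, \ldots, k\}$ is distinguishing for $P$. Since $|\Delta| \geqs 5$ in both type II and III(a), we have $\log_{|\Delta|} d(P) \leqs \log_2 d$, so the displayed bound is linear in $d$, which is exactly what Theorem \ref{t:main} requires.

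The main obstacle will be pinning down the product-action base-size bound in a form that applies to arbitrary subgroups $G \leqs L \wr P$, rather than merely to the full wreath product, with the constant $C$ uniform in $L$ and $P$; and, in tandem, verifying cleanly that the top group $P$ really inherits the $\Gamma_d$ condition from $H$ (not just from the artificially larger stabilizer $L_\delta \wr P$). Once these two points are settled, the proposition follows by assembling the ingredients above.
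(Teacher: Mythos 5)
Your outline is correct and follows essentially the same route as the paper's own proof: the paper establishes $L_\delta \in \Gamma_d$ by citing Kov\'acs \cite[2.2]{Kov} to arrange that the coordinate stabilizer $G_1$ induces $L$ on $\Delta_1$, and then uses transitivity of the socle factor $T$ to show $(G_1)_{\a}$ induces the full $L_\delta$ (so $L_\delta$ is a section of $H$), while $P \in \Gamma_d$ follows because $N$ is transitive, so $G = NH$ and $P$ is a quotient of $H$. The two obstacles you flag are settled exactly by these observations together with the product-action bound $b(G) \leqs \left\lceil \log_{|\Delta|} d(Q) \right\rceil + b(L)$ from \cite[Lemma 3.8]{BS} and \cite[Lemma 2.1]{DHM}, applied to the subgroup $Q \leqs P$ fixing the diagonal points built from a base of $L$; combined with Propositions \ref{p:1} and \ref{p:as} and Theorem \ref{t:dist}(ii), this gives a bound linear in $d$, as you propose.
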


\begin{proof}
Let $N = T^k$ be the socle of $G$ and write $N \leqs G \leqs L \wr P$, where $L \leqs {\rm Sym}(\Delta)$ is a primitive group with socle $T$ (either almost simple or diagonal type), $P$ is the transitive subgroup of $S_k$ induced by the conjugation action of $G$ on the $k$ factors of $N$ and
\[
\O = \Delta_1 \times \cdots \times \Delta_k = \Delta^k.
\]
We will denote an arbitrary element $x$ in $G$ by writing $x=(x_1, \ldots, x_k)\pi$ with $x_i \in L$ and $\pi \in P$.

Fix $\delta \in \Delta$ and let $\a = (\delta, \ldots, \delta) \in \O$, so $G_{\a} = \{(x_1, \ldots, x_k)\pi \in G \,:\, \mbox{$x_i \in L_{\delta}$ for all $i$}\}$. Set
\[
G_1 = \{(x_1, \ldots, x_k)\pi \in G \,:\, 1^{\pi}=1\}.
\]
By \cite[2.2]{Kov}, we may assume that $G_1$ induces $L$ on $\Delta_1$. As a consequence, we claim that $(G_1)_{\a}$ induces $L_{\delta}$ on $\Delta_1$.
To see this, suppose $z_1 \in L_{\delta}$ and write $z = (z_1, \ldots, z_k)\pi \in G$ with $1^{\pi}=1$. Since $T = {\rm soc}(L)$ and $L$ is primitive, it follows that $T$ acts transitively on $\Delta$ and so there exist $t_i \in T$ such that $t_iz_i \in L_{\delta}$ for $i=2, \ldots, k$. Now $G$ contains $t = (1,t_2, \ldots, t_k) \in N$, so $tz = (z_1, t_2z_2, \ldots, t_kz_k)\pi \in (G_1)_{\a}$ and this justifies the claim. It follows that every point stabilizer $L_{\delta}$ is in $\Gamma_d$ and thus $b(L) \leqs g(d)$ for some linear function of $d$ by Propositions \ref{p:1} and \ref{p:as}.

Let $\{\delta_1, \ldots, \delta_b\} \subseteq \Delta$ be a base for $L$ with $b=b(L)$ and set $\a_i = (\delta_i, \ldots, \delta_i) \in \O$ for $i=1, \ldots, b$. Then
\[
Q := \bigcap_{i=1}^{b}G_{\a_i} = \{ (x_1, \ldots, x_k)\pi  \in G \,:\, \mbox{$x_j=1$ for all $j$} \}
\]
and we may view $Q$ as a subgroup of $P$. By combining \cite[Lemma 3.8]{BS} with \cite[Lemma 2.1]{DHM} we deduce that
\begin{equation}\label{e:prod}
b(G) \leqs \left\lceil \log_{|\Delta|}d(Q)\right\rceil+b(L),
\end{equation}
where $d(Q)$ is the distinguishing number of $Q$ in its natural action on $\{1, \ldots, k\}$.
Since $N$ is transitive, we have $G = NH$ and thus $H$ induces $P$ on the set of factors of $T^k$. In particular, $P$ is contained in $\Gamma_d$, so $Q \in \Gamma_d$ and thus Theorem \ref{t:dist}(ii) gives $d(Q) \leqs d$. The result now follows from \eqref{e:prod}.
\end{proof}

This completes the proof of Theorem \ref{t:main}.

\subsection{Proof of Theorem \ref{t:3}}

We begin by establishing the following result, which improves the bound on $|G|$ in Lemma \ref{l:2} for large $d$. Notice that the statement coincides with part (i) of Corollary \ref{t:2}.

\begin{prop}\label{p:new}
Let $\e > 0$ be a real number. Then there exists $M(\e) \in \mathbb{N}$ such that if $d \geqs M(\e)$, $G \leqs {\rm Sym}(\O)$ has degree $n$ and $G \in \Gamma_d$,
then $|G| \leqs ((1+\e)d/e)^{n-1}$.
\end{prop}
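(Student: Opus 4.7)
The plan is to refine the inductive argument from the proof of Lemma~\ref{l:2}, keeping the same three-case split (intransitive, transitive imprimitive, primitive) but replacing the crude bound $|G| \leqs n!$ used for the primitive base case by a sharper estimate that exploits Stirling's formula together with Mar\'oti's classification of primitive groups. The constant $M(\e)$ will emerge as the maximum of finitely many thresholds produced during the subcase analysis.

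The two reducible cases go through essentially verbatim. If $G$ is intransitive with an orbit of length $k$ where $1 \leqs k < n$, both induced actions lie in $\Gamma_d$ by Lemma~\ref{l:1}, so the induction hypothesis yields
\[
|G| \leqs ((1+\e)d/e)^{k-1}\cdot ((1+\e)d/e)^{n-k-1} = ((1+\e)d/e)^{n-2},
\]
which is at most $((1+\e)d/e)^{n-1}$ once $(1+\e)d/e \geqs 1$. In the transitive imprimitive case with blocks of size $k$, I would write $G \leqs K \wr H$ with $K$ of degree $k$ and $H$ of degree $n/k$ (both in $\Gamma_d$) and invoke the induction hypothesis to obtain $|G| \leqs |K|^{n/k}|H| \leqs ((1+\e)d/e)^{(k-1)(n/k) + (n/k - 1)} = ((1+\e)d/e)^{n-1}$.

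The substantive work lies in the primitive case, which I would split using Mar\'oti's theorem \cite{Maroti}. If $G \supseteq A_n$, then $A_n$ is a section of $G$, so $n \leqs d-1$; here $|G| \leqs n!$ and Stirling gives $n! \leqs 3 n^{3/2}e^{-1}(n/e)^{n-1}$, so the target inequality rearranges to $3n^{3/2}/e \leqs ((1+\e)d/n)^{n-1}$, and since $n \leqs d-1$ the right-hand side is at least $(1+\e)^{n-1}$, which exponentially dominates the polynomial left-hand side for $n$ larger than some $N_0(\e)$. The product-action subcase, where $G \leqs S_m \wr S_r$ contains $(A_m)^r$ and acts on $n = \binom{m}{k}^r$ points, forces $m \leqs d-1$; since $n \geqs m^r$, the estimate $|G| \leqs (m!)^r r!$ is easily dwarfed by $((1+\e)d/e)^{n-1}$. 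The four Mathieu groups and the residual cases with $|G| < n^{1+\log n}$ (from Mar\'oti's main bound) are then handled by direct numerical comparison for $d$ large.

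The hard part will be controlling the $A_n$ subcase when $n$ is close to $d-1$: the factor $(n/d)^{n-1} \to e^{-1}$ consumes precisely the gain from replacing $d$ by $d/e$, so only the $(1+\e)^{n-1}$ slack is left to absorb the $3n^{3/2}/e$ Stirling correction. I would deal with this by splitting $n$ into a small regime $n \leqs N_0(\e)$, where the trivial bound $|G| \leqs n!$ is dominated by $((1+\e)d/e)^{n-1}$ as soon as $d$ exceeds a constant depending only on $\e$, and a large regime $n > N_0(\e)$, where $(1+\e)^{n-1}$ comfortably overcomes the polynomial Stirling correction. Collecting all the thresholds produced by the subcase analysis yields the required $M(\e)$, and the dichotomy above is exactly what forces $M(\e) \to \infty$ as $\e \to 0$.
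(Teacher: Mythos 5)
Your proposal follows the same inductive skeleton as the paper and handles the intransitive and transitive imprimitive cases in the identical way, via the wreath-product embedding and the induction hypothesis. Your treatment of the $G \supseteq A_n$ subcase (forcing $n \leqs d-1$, applying Stirling, and then splitting into $n \leqs N_0(\e)$ versus $n > N_0(\e)$) is mathematically equivalent to the paper's direct treatment of $n \leqs d-1$, which is done upfront and independently of the primitive/imprimitive structure, using the cutoff $n \leqs d/e$ versus $d/e < n \leqs d-1$ and the defining inequality $d^{3/2} \leqs (1+\e)^{d/e-1}$ baked into $M(\e)$. The genuine divergence is in the remaining primitive case: you invoke Mar\'oti's classification \cite{Maroti} and analyse the product-action, Mathieu, and residual $|G| < n^{1+\log n}$ subcases separately, whereas the paper observes that once $n \geqs d$ the hypothesis $G \in \Gamma_d$ forces $A_n \not\leqs G$ and then applies the single, elementary Praeger--Saxl bound $|G| < 4^n$ \cite{PS}, which immediately gives $|G| < 5^{n-1} < d_1^{n-1}$ since $n \geqs d \geqs 5e$. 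The paper's route is markedly cleaner, and more importantly it is CFSG-free; as Remark \ref{r:cfsg} emphasises, this makes the entire proof of Theorem \ref{t:3} independent of the Classification, a feature your Mar\'oti-based argument sacrifices. Your product-action subcase is also left sketchy: the claim that $(m!)^r r!$ is ``easily dwarfed'' by $((1+\e)d/e)^{n-1}$ with $n = \binom{m}{k}^r$ and $m \leqs d-1$ is correct, but it is not quite immediate when $m$ is near its minimum and $d = m+1$ (one needs to use $d \geqs m+1$ rather than just $d > m$, and to bound the top group $r!$ more carefully when $r$ is large, noting that the top quotient lies in $\Gamma_d$); by contrast, Praeger--Saxl avoids any such case checking.
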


\begin{proof}
Given $\e > 0$, let $M(\e)$ be the minimal integer $M \geqs 5e$
such that for all $m \geqs M$ we have
\[
m^{3/2} \leqs (1 + \e)^{m/e-1}.
\]
Let $d \geqs M(\e)$. We prove the result by induction on $n$, noting that the case $n=1$ is trivial. Let us assume $n \geqs 2$.

We apply the known bound $n! \leqs e^{1/12n}(2 \pi n)^{1/2} (n/e)^n$ which holds for all $n \geqs 1$
(see for instance \cite{F}, Section 2.9). Since $n \geqs 2$ and $e^{1/24} (2 \pi)^{1/2} < e$ it follows that
\[
n! < e n^{1/2} (n/e)^n.
\]

Suppose first that $n \leqs d-1$. If $n \leqs d/e$ then $n! \leqs n^{n-1} \leqs (d/e)^{n-1} < ((1+\e)d/e)^{n-1}$,
as required. Otherwise we have $d/e < n \leqs d-1$. This yields
\[
|G| \leqs n! <  e n^{1/2} (n/e)^n < e d^{1/2} (d/e)^n \leqs ((1+\e)d/e)^{n-1},
\]
where the last inequality follows from the fact that $d \geqs M(\e)$ so that $d^{3/2} \leqs (1 + \e)^{d/e-1}$.

It remains to prove the result for $n \geqs d$.
The argument is now similar to the proof of Lemma \ref{l:2}. Set $d_1 := (1+\e)d/e$.

If $G$ has an orbit of length $k<n$ then induction yields $|G| \leqs d_1^{k-1}\cdot d_1^{n-k-1} < d_1^{n-1}$.
If $G$ is transitive with blocks of imprimitivity of size $1<k<n$, then by the induction hypothesis we obtain
$|G| \leqs (d_1^{k-1})^{n/k}\cdot d_1^{n/k-1} = d_1^{n-1}$.
Finally, if $G$ is primitive, then $G \in \Gamma_d$ implies that $A_n \not\leqs G$ and the main theorem of \cite{PS} gives $|G|<4^{n}$. Since $n \geqs d \geqs 5e$, it follows that
$|G| < 5^{n-1} < d_1^{n-1}$, completing the proof.
\end{proof}

\begin{rem}\label{r:cfsg}
It is worth noting that the proof of Proposition \ref{p:new} is independent of Lemma \ref{l:2}. In particular, in the final step we use \cite{PS} to bound the order of $G$, which does not rely on the Classification of Finite Simple Groups. This is in contrast to the proof of Lemma \ref{l:2}, where we applied a theorem of Mar\'{o}ti \cite{Maroti} that does require CFSG. This small adjustment in the proof of Proposition \ref{p:new} allows us to present a CFSG-free proof of Theorem \ref{t:3}.
\end{rem}

\begin{proof}[Proof of Theorem \ref{t:3}]
Let $\delta > 0$ be a real number and let $c \geqs 0$ be an integer. We proceed by induction on $c$, noting that the case $c=0$ is Proposition \ref{p:new}, where we define $N(0,\delta) := M(\delta)$.

Now assume $c \geqs 1$. Set
\[
\e = (1+\delta)^{3/5}-1,
\]
so $0 < \e < \delta$, and define
\[
N(c,\delta) := \max(N(c-1, \e), M(\e), c),
\]
where the integer $M(\e)$ is defined as in the proof of Proposition \ref{p:new}.

Let $d \geqs N(c,\delta)$ be an integer and set $d_1 = (1+\delta)d/e$. Let $G \leqs {\rm Sym}(\O)$ be a permutation group of degree $n$ such that $G_{(\Delta)} \in \Gamma_d$ for every subset $\Delta \subseteq \O$ of size $c$. Our goal is to establish the bound $|G| \leqs d_1^{n-1}$.

We will first handle the case $n < d$.  If $n \leqs d/e$ then
\[
|G| \leqs n! \leqs n^{n-1} \leqs (d/e)^{n-1} < ((1+\delta)d/e)^{n-1} = d_1^{n-1},
\]
as required. Otherwise we have $d/e < n < d$. This yields
\[
|G| \leqs n! <  e n^{1/2} (n/e)^n < e d^{1/2} (d/e)^n < ((1+\delta)d/e)^{n-1},
\]
where the last inequality follows from the fact that $d \geqs M(\e)$, which yields
\[
d^{3/2} \leqs (1 + \e)^{d/e-1} < (1 + \delta)^{d/e-1}.
\]

To complete the proof, we may assume that $n \geqs d$.
First suppose $G$ is transitive and fix $\a \in \O$. Set $H = G_{\a}$, so $|G:H| = n$ and we may regard $H$ as a permutation group on
$\O \setminus \{ \a \}$ of degree $n-1$. In addition, the pointwise stabilizers in $H$ of subsets of size $c-1$ are contained in $\Gamma_d$.

Set $d_2 = (1+\e)d/e$. Recall that $d \geqs N(c, \delta) \geqs N(c-1, \e)$, so the inductive hypothesis yields $|H| \leqs d_2^{n-2}$ and
\[
|G| = n |H| \leqs n d_2^{n-2}.
\]
Since $n \geqs d \geqs N(c, \delta) \geqs M(\e)$ we have $n^{3/2} \leqs (1 + \e)^{n/e-1}$, hence
\[
n \leqs (1+\e)^{\frac{2}{3}(n/e-1)}.
\]
We conclude that
\[
|G| \leqs (1+\e)^{\frac{2}{3}(n/e-1)} d_2^{n-2} < (1+\e)^{\frac{2}{3}(n/e-1)} ((1+\e)d/e)^{n-1} < (1+\e)^{a n - 5/3}(d/e)^{n-1},
\]
where $a = 1 + 2/(3e) < 5/3$. It follows that $|G| < (1+\e)^{5(n-1)/3} (d/e)^{n-1}$, and since
$(1+\e)^{5/3} = 1+\delta$ we obtain
\[
|G| < ((1+\delta)d/e)^{n-1} = d_1^{n-1},
\]
as required.

Finally, suppose $G$ is intransitive. Let $\O_1 \subseteq \O$ be an orbit of $G$ and let $\O_2 = \O \setminus \O_1$. For $i = 1,2$, let $N_i \normeq G$ be the kernel of the action of $G$ on $\O_i$ and set $G_i = G/N_i \leqs \Sym (\O_i)$.
Note that if  $\Delta \subseteq \O_i$ is a subset of size $c$ then $G_{(\Delta)} \geqs N_i$ and
$(G_i)_{(\Delta)} = G_{(\Delta)}/N_i \in \Gamma_d$. In addition, observe that $G \leqs G_1 \times G_2$.

Let $k = |\O_1|$, so $n-k = |\O_2|$ and $k, n-k < n$. Recall that $d \geqs N(c,\delta) \geqs c$. Thus if $c>k$ then $k < d$, so as in the case $n<d$ discussed above we deduce that $|G_1| \leqs k! < d_1^{k-1}$. Similarly, $|G_2| \leqs (n-k)! <d_1^{n-k-1}$ if $c>n-k$. Now assume $c \leqs k$. As noted above, the pointwise stabilizer of every $c$-element subset of $\O_1$ is contained in $\Gamma_d$, so by applying induction on $n$ (with $c$ and $\delta$ fixed), we deduce that $|G_1|<d_1^{k-1}$. Similarly, if $c \leqs n-k$ then $|G_2|<d_1^{n-k-1}$. We conclude that
\[
|G| \leqs |G_1||G_2| < d_1^{k-1} d_1^{n-k-1} < d_1^{n-1}
\]
and the result follows.
\end{proof}

\vs

This completes the proof of Theorem \ref{t:3}. In addition, as noted in Section \ref{s:intro}, Corollary \ref{t:2} follows immediately.

\section{Proof of Theorem \ref{t:4}}\label{s:proof2}

In this section we prove Theorem \ref{t:4}. Let $G \leqs {\rm Sym}(\O)$ be a finite primitive permutation group with point stabilizer $H$ and assume $G$ is not an affine type group. In addition, let us assume every $2$-point stabilizer in $G$ is solvable.
Our goal is to show that $b(G) \leqs 6$, with equality when $G$ is almost simple if and only if $G = {\rm Sp}_{6}(2)$ and $H = {\rm O}_{6}^{+}(2)$. If $H$ itself is solvable, then the main theorem of \cite{Bur20} implies that $b(G) \leqs 5$, so we may assume for the remainder that $H$ is nonsolvable.

\subsection{Almost simple groups}\label{ss:as}

Here we prove Theorem \ref{t:4} in the case where $G$ is almost simple with socle $G_0$. We begin by handling the groups with socle a sporadic simple group.

\begin{prop}\label{p:6spor}
If $G_0$ is a sporadic simple group, then $b(G) \leqs 5$.
\end{prop}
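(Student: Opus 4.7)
The plan is to combine the extensive literature on base sizes of primitive almost simple groups with sporadic socle with the extra hypothesis that every $2$-point stabilizer in $G$ is solvable. Precise base sizes for almost all such primitive actions have been computed, notably in the work of Burness--O'Brien--Wilson and subsequent authors: in every case $b(G)\leqs 7$, and the list of actions with $b(G)\geqs 6$ is short and explicit. So the task reduces to inspecting this short list and ruling out each member under our hypothesis.

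Concretely, I would first consult these tables to isolate the finitely many pairs $(G,H)$ with $G_0$ sporadic, $H$ maximal, and $b(G)\geqs 6$. For each such pair, exactly one of two things must happen. Either $H$ is solvable, in which case \cite{Bur20} forces $b(G)\leqs 5$, contradicting $b(G)\geqs 6$; this settles these cases immediately. Or $H$ is nonsolvable, in which case I would produce an element $g\in G\setminus H$ with $H\cap H^g$ nonsolvable, violating our $2$-point hypothesis. The existence of such a $g$ can typically be read off the list of suborbit representatives or the fusion information in the \textsc{Atlas}: one searches for a conjugacy class of $G$ whose permutation character value on $\O=G/H$ is large enough to force $H\cap H^g$ to contain a nonsolvable composition factor of $H$. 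For the smaller sporadic groups this is immediate from published suborbit tables; for larger groups such as $\mathrm{Fi}_{24}'$, the Baby Monster and the Monster, the same task is carried out computationally using the character table library and the permutation representations available in the \textsf{GAP} library.

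The main obstacle I expect is the case-by-case verification for the largest sporadic groups, where explicit permutation representations are cumbersome or not fully available. In those cases I would fall back on character-theoretic fixed-point estimates in the spirit of \cite{BLSh}: since we need only establish the existence of a single nonsolvable $H\cap H^g$ for each of a handful of actions, a lower bound on $|H\cap H^g|$ via the standard inequality $|H\cap H^g|=|H|\cdot\mathrm{fix}_\O(g)/|\O|$ applied to a suitable element $g$ suffices, once we know $|H|$ is too large to have only solvable intersections. No new structural theory is required; the argument reduces to a finite, if delicate, check, after which $b(G)\leqs 5$ follows for every primitive almost simple $G$ with sporadic socle satisfying the hypotheses of Theorem \ref{t:4}.
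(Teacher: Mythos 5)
Your proposal follows essentially the same route as the paper: invoke the main theorem of \cite{BOW} to reduce to the short explicit list of sporadic actions with $b(G)\geqs 6$, then eliminate each by exhibiting a nonsolvable $2$-point stabilizer (the first three cases are $2$-transitive, so this is immediate from the ATLAS). Your anticipated difficulties with the largest sporadic groups do not in fact arise, since the list from \cite{BOW} consists only of the five pairs $({\rm M}_{24},{\rm M}_{23})$, $({\rm M}_{23},{\rm M}_{22})$, $({\rm Co}_{3},{\rm McL}.2)$, $({\rm Co}_{2},{\rm U}_{6}(2).2)$ and $({\rm Fi}_{22}.2, 2.{\rm U}_{6}(2).2)$.
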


\begin{proof}
Suppose $b(G) \geqs 6$. Then the main theorem of \cite{BOW} implies that $(G,H)$ is one of the following:
\[
({\rm M}_{24},{\rm M}_{23}),\; ({\rm M}_{23},{\rm M}_{22}),\; ({\rm Co}_{3}, {\rm McL}.2),\; ({\rm Co}_{2}, {\rm U}_{6}(2).2), \; ({\rm Fi}_{22}.2, 2.{\rm U}_{6}(2).2).
\]
In the first three cases we note that $G$ is $2$-transitive and the $2$-point stabilizers are ${\rm M}_{22}$, ${\rm L}_{3}(4)$ and ${\rm U}_{4}(3).2$, respectively. Similarly, in the latter two cases it is easy to check that every $2$-point is nonsolvable and the result follows.
\end{proof}

\begin{prop}\label{p:6alt}
If $G_0 = A_m$ is an alternating group, then $b(G) \leqs 5$.
\end{prop}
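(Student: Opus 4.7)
Building on the reduction to the case where $H$ is nonsolvable and $G$ is almost simple with socle $A_m$, I would classify the maximal subgroups of $A_m$ and $S_m$ containing (or equal to) $H$ into three types: nonstandard, where $H$ acts primitively on $\{1,\ldots,m\}$; intransitive, where $H = (S_k \times S_{m-k}) \cap G$ with $1 \leqs k < m/2$; and imprimitive, where $H = (S_k \wr S_t) \cap G$ with $m = kt$ and $1<k,t$. For the nonstandard cases I would invoke the resolution of Cameron's conjecture for alternating groups (Burness--Guralnick--Saxl), which yields $b(G)=2$ for all sufficiently large $m$, and confirm $b(G) \leqs 5$ for the finitely many small-$m$ cases using the published base-size tables; many possibilities here are eliminated outright by the nonsolvability of $H$.

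In the intransitive case, the key observation is that for any pair of disjoint $k$-subsets the 2-point stabilizer is isomorphic to $(S_k \times S_k \times S_{m-2k}) \cap G$. Solvability of this group forces $k \leqs 4$ and $m - 2k \leqs 4$, while nonsolvability of $H$ requires $m - k \geqs 5$. Together these constrain $k \in \{1,2,3,4\}$ and $m \leqs 2k+4$, leaving only a short finite list of pairs $(m,k)$. For each I would apply the Halasi-type bound \cite[Corollary 4.3]{Halasi} already used in the proof of Proposition \ref{p:as}, together with \cite[Remark 1.6]{BGS} for the $k=2$ action, refining by direct computation in the remaining borderline cases.

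The imprimitive case is handled analogously: by choosing a sufficiently transverse pair of partitions, the 2-point stabilizer contains a product of wreath-like factors whose solvability bounds both $k$ and $t$ severely, while nonsolvability of $H = (S_k \wr S_t) \cap G$ yields $\max(k, t) \geqs 5$. This again reduces to a finite list, to be handled by \cite[Theorem 4]{BCN} combined with case-by-case inspection. The main obstacle will be the accumulation of borderline small-$m$ cases that the asymptotic bounds of \cite{Halasi} and \cite{BCN} treat only in ranges where $m$ is not too small; these need to be verified individually to rule out $b(G)=6$, although the nonsolvability of $H$ together with the 2-point stabilizer constraint leaves a residual list short enough to enumerate by hand or with a brief machine computation.
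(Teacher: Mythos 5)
Your plan follows the same overall architecture as the paper's proof: reduce to nonsolvable $H$ (already done in the preamble to Section 3), split according to whether $H$ is primitive, intransitive, or imprimitive on $\{1,\ldots,m\}$, dispose of the primitive case via \cite{BGS}, and in the remaining cases use the structure of a 2-point stabilizer to cap $m$ (and $k$) and then finish with asymptotic base size bounds and machine computation. Where you diverge from the paper is in the details, and one of those divergences matters. In the intransitive case you take a pair of \emph{disjoint} $k$-sets, getting the constraints $k\leqs 4$ and $m-2k\leqs 4$, hence $m\leqs 12$; the paper instead takes $k$-sets differing in a single element, giving $(S_{k-1}\times S_{m-k-1})\cap G$ and the sharper bound $m\leqs 10$. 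Both land inside the range that the paper already handles by a single \textsc{Magma} sweep over $m\leqs 12$, so this is cosmetic. The more substantive difference is in the imprimitive case: you propose to close with \cite[Theorem 4]{BCN}, but that bound reads $b(G)\leqs\max\{6,\lceil\log_{m/k}k\rceil+3\}$, so it only ever certifies $b(G)\leqs 6$, not the required $\leqs 5$. You anticipate this by appealing to ``case-by-case inspection'', and since your transversality argument (correctly) bounds both $k$ and $t=m/k$, you do get a finite list, so there is no logical gap; but it is a noticeably heavier route. The paper instead observes that the 2-point stabilizer constraint already forces $k\leqs 4$, the assumption $m\geqs 13$ (together with the nonsolvability of $H$ to rule out $(m,k)=(16,4)$) forces $m/k\geqs 5>k$, and then \cite[Theorem 2]{BGL} applies directly to give $b(G)\leqs 4$ for \emph{all} such $t$ at once, with no residual borderline list. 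Replacing your BCN citation with BGL would make your imprimitive argument essentially coincide with the paper's and remove the need for extra computations there.
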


\begin{proof}
The cases with $m \leqs 12$ can be checked using {\sc Magma} \cite{magma}, so we will assume $m \geqs 13$. In particular, $G = A_m$ or $S_m$. If $H$ acts primitively on $\{1, \ldots, m\}$ then $b(G)=2$ by the main theorem of \cite{BGS}. Therefore we may assume that either
\begin{itemize}\addtolength{\itemsep}{0.2\baselineskip}
\item[{\rm (a)}] $H = (S_k \times S_{m-k}) \cap G$ for some $1 \leqs k < m/2$; or
\item[{\rm (b)}] $H = (S_{k} \wr S_{m/k}) \cap G$, where $1 < k \leqs m/2$ and $k$ divides $m$.
\end{itemize}

In case (a), we identify $\O$ with the set of $k$-element subsets of $\{1, \ldots, m\}$. If we take the $k$-sets $\a = \{1, \ldots, k\}$ and $\b = \{1, \ldots, k-1,k+1\}$, then $G_{\a,\b} = (S_{k-1} \times S_{m-k-1}) \cap G$ is a $2$-point stabilizer and thus $k$ and $m-k$ are at most $5$ (since we are assuming that $G_{\a,\b}$ is solvable). But this is incompatible with the bound $m \geqs 13$.

Finally, let us turn to case (b). Here we identify $\O$ with the set of partitions of $\{1,\ldots, m\}$ into $m/k$ parts of size $k$. Consider the partitions $\a,\b \in \O$, where
\begin{align*}
\a & = \left\{ \{1, \ldots, k\}, \{k+1, \ldots, 2k\}, \ldots \right\} \\
\b & = \left\{ \{1, \ldots, k-1,k+1\}, \{k,k+2, \ldots, 2k\}, \ldots \right\}
\end{align*}
only differ in the first two parts. Visibly, the $2$-point stabilizer $G_{\a,\b}$ contains $A_{k-\e}$, where $\e=0$ if $m/k \geqs 3$, otherwise $\e=1$. Therefore, since $m \geqs 13$, we may assume $k \in \{2,3,4\}$ and $m/k \geqs 5$ (note that $H$ is solvable if $m=16$ and $k=4$). In particular, $k<m/k$ and thus \cite[Theorem 2]{BGL} (also see Remark 2.8 in \cite{BGL} for $G=A_m$) gives $b(G) \leqs 4$.
\end{proof}

\begin{prop}\label{p:6ex}
If $G_0$ is an exceptional group of Lie type, then $b(G) \leqs 5$.
\end{prop}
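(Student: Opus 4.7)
The plan is to combine strong base size bounds for almost simple exceptional Lie type groups already in the literature with the hypothesis that every $2$-point stabilizer is solvable. Since $H$ is assumed nonsolvable (the case of solvable $H$ being handled by \cite{Bur20}), the class of relevant actions is already restricted considerably.

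First I would invoke the general bound $b(G) \leqs 7$ from \cite{BLSh}, together with the refinements due to Burness and collaborators for exceptional groups of Lie type, to reduce to a short list of primitive actions where $b(G)$ might exceed $5$. This reduces the proposition to ruling out $b(G) = 6$ or $7$ for each remaining action, using the $2$-point stabilizer hypothesis.

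Next, using the Liebeck--Seitz classification of maximal subgroups of $G$, I would treat each family of candidates for $H$ in turn. If $H = QL$ is a maximal parabolic with unipotent radical $Q$ and Levi complement $L$, then for a generic $g$ the intersection $H \cap H^g$ contains a substantial part of (a conjugate of) $L$; nonsolvability of $H$ forces $L$ to have a nonsolvable composition factor, so $H \cap H^g$ is nonsolvable, contradicting the hypothesis on $2$-point stabilizers. A parallel analysis applies to reductive maximal subgroups of maximal rank, where a generic intersection again contains a reductive subgroup of positive rank whose derived group is nonsolvable unless $H$ itself is solvable. This leaves the almost simple maximal subgroups in Aschbacher's class $\mathcal{S}$, for which the explicit bounds in \cite{BLSh} and its refinements already yield $b(G) \leqs 5$.

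The main obstacle will be the low-rank exceptional groups $G_2(q)$, ${}^2G_2(q)$, ${}^2B_2(q)$, ${}^3D_4(q)$, $F_4(q)$ and ${}^2F_4(q)$ for small $q$, where the maximal subgroup structure is richest and several candidate nonsolvable stabilizers have small enough $2$-point stabilizers that the generic argument above may fail. For these one will likely need case-by-case verification, in part using direct computation in {\sc Magma} \cite{magma}, to confirm that solvability of every $2$-point stabilizer in $G$ forces $b(G) \leqs 5$.
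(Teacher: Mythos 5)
Your proposal matches the paper's strategy: reduce via the classification of almost simple exceptional primitive actions with $b(G) \geqs 6$ (the main theorem of \cite{Bur18}, which is the ``refinement'' you anticipate), then for a maximal parabolic $H$ with Levi decomposition $H_0 = QL$ exhibit an $x$ such that $L \cap L^x$, and hence the $2$-point stabilizer $H \cap H^x$, is nonsolvable --- the paper takes $x$ a long root element in a root subgroup commuting with a large Levi subsystem, rather than a ``generic'' $g$, but the effect is the same. The only thing you underestimate is the strength of \cite{Bur18}: it already pins the exceptions down to exactly the three maximal parabolic cases $G_0 = E_7$ with $H = P_7$ and $G_0 = E_6$ with $H = P_1$ or $P_6$, so the separate treatment of reductive maximal-rank subgroups and class $\mathcal{S}$ subgroups, the worries about low-rank groups $G_2(q)$, ${}^2B_2(q)$, ${}^3D_4(q)$, etc.\ for small $q$, and the proposed {\sc Magma} verifications are all unnecessary.
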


\begin{proof}
Suppose $b(G) \geqs 6$. By the main theorem of \cite{Bur18}, it follows that $b(G)=6$ and either
\begin{itemize}\addtolength{\itemsep}{0.2\baselineskip}
\item[{\rm (a)}] $G_0 = E_7$ and $H = P_7$; or
\item[{\rm (b)}] $G_0 = E_6$ and $H = P_1$ or $P_6$.
\end{itemize}
Here $P_k$ is the standard notation for a maximal parabolic subgroup of $G$ corresponding to the $k$-th node in the Dynkin diagram of $G_0$ (where we adopt the standard Bourbaki \cite{Bou} labelling of nodes).

Set $H_0 = H \cap G_0$ and let $H_0 = QL$ be a Levi decomposition. In each of the above cases, it is easy to see that $L \cap L^x$ is nonsolvable for some $x \in G_0$, which implies that the $2$-point stabilizer $H \cap H^x$ is nonsolvable. For example, consider case (a) and let $\{\a_1, \ldots, \a_7\}$ be a set of simple roots for the corresponding root system of $G_0$. Let $x \in G_0 \setminus H_0$ be a long root element in the root subgroup $U_{\a_7}$ of $G_0$. Then $x$ centralizes the nonsolvable subgroup $\la U_{\pm \a_i} \,:\, 1 \leqs i \leqs 5 \ra < L$ of type $D_5$ and thus $L \cap L^x$ is nonsolvable. Similar reasoning applies in case (b).
\end{proof}

\begin{prop}\label{p:6class}
If $G_0$ is a classical group, then $b(G) \leqs 6$, with equality if and only if $G = {\rm Sp}_{6}(2)$ and $H = {\rm O}_{6}^{+}(2)$.
\end{prop}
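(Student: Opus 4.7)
The plan is to apply the Aschbacher--Kleidman--Liebeck classification of maximal subgroups of classical groups (following \cite{KL}), splitting the argument between \emph{standard} actions (the subspace stabilizers, together with the exceptional $\mathrm{O}_m^{\pm}(q) < \mathrm{Sp}_m(q)$ case with $q$ even, as in the discussion preceding Proposition \ref{p:as}) and \emph{non-standard} actions. Recall that $H$ is nonsolvable, since the solvable case is already handled by \cite{Bur20}.

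For the non-standard case, the main theorem of \cite{BLSh} gives $b(G) \leqs 7$. To reduce this to $b(G) \leqs 6$, I would examine each configuration in which $b(G) = 7$ can occur and show that in every such case there is an element $g \in G_0$ for which the intersection $H \cap H^g$ contains a nonsolvable subgroup, contradicting our hypothesis. The same tactic was used successfully in Propositions \ref{p:6spor}--\ref{p:6ex}, and a suitable root-element or Levi-complement construction inside $H$ should yield the required nonsolvable intersection in each remaining case. No extremal examples should arise in the non-standard setting.

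For the standard case, $H$ stabilizes either a proper nonzero subspace $U$ of the natural module $V$, or a nondegenerate quadratic form on $V$ refining a symplectic form in characteristic two. The key observation is that for a generic $g \in G_0$ the $2$-point stabilizer $H \cap H^g$ is the joint stabilizer of $U$ and $U^g$ (respectively of two quadratic forms), which contains a classical subgroup of rank roughly $\min(\dim U,\, m - \dim U)$ acting on $U \cap U^g$ or on a complement. Requiring solvability of every such intersection forces both $\dim U$ and $m - \dim U$ to be at most $2$, and forces $q$ to be small when this rank equals $2$; hence $m$ is bounded by a small absolute constant. In each of the resulting small-dimensional configurations I would apply the explicit base-size estimates of \cite{HLM} (combined with \cite{BGS} where relevant) or check the bound directly using \cite{magma}. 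This should yield $b(G) \leqs 6$ throughout, and the computational check isolates $(G, H) = (\mathrm{Sp}_6(2), \mathrm{O}_6^{+}(2))$, a coset action of degree $36$, as the unique extremal example.

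The main obstacle is the standard case: the structure of $H \cap H^g$ depends sensitively on the relative position of $U$ and $U^g$ (equal, complementary, or with intersection of prescribed dimension) and on whether $U$ is totally isotropic, nondegenerate, or of mixed type with respect to the form preserved by $G_0$. Working through the families $\mathrm{L}_m, \mathrm{U}_m, \mathrm{Sp}_m, \mathrm{O}_m^{\pm}$ uniformly, and carefully enumerating the low-dimensional exceptions over small fields, is the technically delicate step; the unique extremal pair $(\mathrm{Sp}_6(2), \mathrm{O}_6^{+}(2))$ will then emerge as a small-$q$, small-$m$ exception in this final case analysis.
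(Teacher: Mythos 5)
Your outline follows the paper's proof in its essential shape: split along the standard/nonstandard dichotomy, invoke a black-box bound for the nonstandard case, and in the standard case force nonsolvability of $2$-point stabilizers by exhibiting classical composition factors, reducing to small $m$ and $q$ and ultimately to a finite computation isolating $(\mathrm{Sp}_6(2),\mathrm{O}_6^+(2))$. One concrete improvement over your plan: for nonstandard \emph{classical} actions the paper does not use the bound $b(G)\leqs 7$ from \cite{BLSh}; it invokes the main theorem of \cite{Bur07}, which already gives $b(G)\leqs 5$ for every nonstandard classical action, so the nonstandard case closes immediately with no analysis of $b(G)\in\{6,7\}$ configurations. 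If you insist on starting from $b(G)\leqs 7$, note that you would also have to treat the $b(G)=6$ cases (to prove the ``only if'' direction of the statement), and in practice enumerating those cases amounts to citing \cite{Bur07} anyway. Also be aware that your heuristic ``forces both $\dim U$ and $m-\dim U$ to be at most $2$'' is not literally what happens: for example in the $P_1$ action of $\mathrm{L}_m(q)$ the bound comes from an $\mathrm{L}_{m-2}(q)$ section, forcing $m\leqs 4$ rather than $m\leqs 3$, and the quadratic-form case $\mathrm{O}_m^{\pm}(q)<\mathrm{Sp}_m(q)$ admits $m=6$ (precisely where the extremal example lives), so the reduction constants vary by family and the bookkeeping is substantially heavier than the sketch suggests, including a number of small-dimensional sporadic configurations checked by hand or in \textsc{Magma}.
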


\begin{proof}
Let $G_0$ be a finite simple classical group over $\mathbb{F}_q$ with natural module $V$ of dimension $m$. Write $q=p^f$, where $p$ is a prime, and set $H_0 = H \cap G_0$. As in the proof of Proposition \ref{p:as}, we say that $G$ is \emph{standard} if
\begin{itemize}\addtolength{\itemsep}{0.2\baselineskip}
\item[{\rm (i)}] $H$ is the stabilizer in $G$ of a subspace (or pair of subspaces) of $V$; or
\item[{\rm (ii)}] $G_0 = {\rm Sp}_{m}(q)$, $q$ is even and $H_0 = {\rm O}_{m}^{\pm}(q)$.
\end{itemize}
Otherwise, $G$ is \emph{nonstandard}. By the main theorem of \cite{Bur07} we have $b(G) \leqs 5$ if $G$ is nonstandard, so for the remainder of the proof we may assume $G$ is standard. Let us also recall that we may assume $H$ is nonsolvable.

\vs

\noindent \emph{Case 1. Linear groups.}

\vs

To begin with, let us assume $G_0 = {\rm L}_{m}(q)$ and fix a basis $\{e_1, \ldots, e_m\}$ for $V$. There are three cases to consider:
\begin{itemize}\addtolength{\itemsep}{0.2\baselineskip}
\item[{\rm (a)}] $H$ is a parabolic subgroup of type $P_k$ with $1 \leqs k \leqs m/2$;
\item[{\rm (b)}] $H$ is a parabolic subgroup of type $P_{k,m-k}$ with $1 \leqs k < m/2$;
\item[{\rm (c)}] $H$ is of type ${\rm GL}_{k}(q) \times {\rm GL}_{m-k}(q)$ with $1 \leqs k < m/2$.
\end{itemize}

Suppose we are in case (a), which allows us to identify $\O$ with the set of $k$-dimensional subspaces of $V$. Set $\a = \la e_1, \ldots, e_k\ra$ and $\b = \la e_{k+1}, \ldots, e_{2k} \ra$. If $k \geqs 3$, or if $k=2$ and $q \geqs 4$, then  the $2$-point stabilizer $G_{\a,\b}$ has a nonabelian composition factor ${\rm L}_{k}(q)$ and is therefore nonsolvable. Next assume $k=2$ and $q \leqs 3$. Set $\b' = \la e_2, e_3\ra$ and observe that ${\rm L}_{m-3}(q)$ is a composition factor of $G_{\a,\b'}$ if $m \geqs 6$. If $m=5$ then a straightforward {\sc Magma} computation gives  $b(G)=4$, whereas $H$ is solvable if $m=4$. Finally, let us assume $k=1$. If $m \geqs 5$ then $G_{\a,\b}$ has a composition factor ${\rm L}_{m-2}(q)$. Similarly, if $m=4$ then we may assume $q \leqs 3$ and we calculate that $b(G) = 4+\delta_{3,q}$. For $m = 3$ we have $G \leqs {\rm P \Gamma L}_{3}(q)$ and it is easy to check that $\{ \a,\b, \la e_3 \ra, \la e_1+e_2+e_3\ra,\la e_1+e_2+\mu e_3\ra \}$ is a base for $G$, where $\mathbb{F}_{q}^{\times} = \la \mu \ra$. Finally, we note that $H$ is solvable if $m=2$.

Next we turn to case (b). Here we identify $\O$ with the set of flags of $V$ of the form $0 < U < W < V$, where $\dim U = k$ and $\dim W = m-k$. Let $\a,\b \in \O$ be the flags
\[
0 < \la e_1, \ldots, e_k \ra < \la e_1, \ldots, e_{m-k}\ra < V,\;\; 0 < \la e_{m-k+1}, \ldots, e_m \ra < \la e_{k+1}, \ldots, e_{m}\ra < V
\]
respectively. Then $G_{\a,\b}$ contains a Levi factor of the parabolic subgroup $H = G_{\a}$ and thus every $2$-point stabilizer is solvable if and only if $H$ is solvable.

To complete the proof of the proposition for $G_0 = {\rm L}_{m}(q)$, we may assume $H$ is of type ${\rm GL}_{k}(q) \times {\rm GL}_{m-k}(q)$ with $1 \leqs k < m/2$. Here we identify $\O$ with the set of direct sum decompositions $V = U \oplus W$ with $\dim U = k$. Define $\a,\b \in \O$ as follows
\[
\la e_1, \ldots, e_k \ra \oplus \la e_{k+1}, \ldots, e_m\ra,\;\; \la e_{k+1}, \ldots, e_{2k} \ra \oplus \la e_1, \ldots, e_k,e_{2k+1}, \ldots, e_m\ra.
\]
Clearly, if $k \geqs 3$, or if $k=2$ and $q \geqs 4$, then $G_{\a,\b}$ has a composition factor ${\rm L}_{k}(q)$. Next assume $k = 2$ and $q \leqs 3$. If $m=5$ then a {\sc Magma} computation shows that $b(G) = 3$, otherwise $G_{\a,\b'}$ has a composition factor ${\rm L}_{m-3}(q)$, where $\b' \in \O$ is the decomposition $\la e_1, e_3\ra \oplus \la e_2,e_4, \ldots, e_m\ra$. Finally, suppose $k=1$. If $m \geqs 5$ then $G_{\a,\b}$ has a composition factor ${\rm L}_{m-2}(q)$. Similarly, if $m=4$ then we may assume $q \leqs 3$ and we calculate that $b(G) \leqs 4$. Finally, if $m=3$ then one can check that $\{\a,\b,\gamma,\delta\}$ is a base for $G$, where $\gamma, \delta \in \O$ are the decompositions $\la e_1+e_2+e_3\ra \oplus \la e_2,e_3\ra$ and $\la e_1+e_2+\mu e_3\ra \oplus \la e_1, e_2 \ra$ with $\mathbb{F}_{q}^{\times} = \la \mu \ra$. Indeed, it is easy to verify that the pointwise stabilizer of $\{\a,\b,\gamma,\delta\}$ in ${\rm P \Gamma L}_{3}(q)$ is trivial and we observe that $\delta$ is not fixed by the inverse-transpose graph automorphism of $G_0$, which maps $\la e_1+e_2+\mu e_3\ra$ to the $2$-space $\la e_1-e_2, \mu e_1 - e_3\ra$.

\vs

\noindent \emph{Case 2. Unitary groups.}

\vs

Now assume $G_0 = {\rm U}_{m}(q)$. Following  \cite[Proposition 2.3.2]{KL}, fix a standard basis
\[
\left\{\begin{array}{ll}
\{e_1, \ldots, e_{\ell},f_1, \ldots, f_{\ell}\} & \mbox{if $m=2\ell$} \\
\{e_1, \ldots, e_{\ell},f_1, \ldots, f_{\ell},x\} & \mbox{if $m=2\ell+1$.}
\end{array}\right.
\]

First assume $H$ is a parabolic subgroup of type $P_k$ with $1 \leqs k \leqs m/2$, so we may identify $\O$ with the set of totally isotropic $k$-dimensional subspaces of $V$. Set $\a = \la e_1, \ldots, e_k\ra$ and $\b = \la f_1, \ldots, f_k\ra$ with respect to the above basis. If $k \geqs 2$ then $G_{\a,\b}$ has a composition factor ${\rm L}_{k}(q^2)$, so we may assume $k=1$. If $m=3$, or $m = 4$ and $q \leqs 3$, or $(m,q) = (5,2)$ then $H$ is solvable, whereas ${\rm U}_{m-2}(q)$ is a composition factor of $G_{\a,\b}$ in the remaining cases.

Now let us turn to the case where $H = G_{\a}$ is the stabilizer of a nondegenerate $k$-space, where $1 \leqs k < m/2$. Fix $\b \in \O$ such that $\a \perp \b$ is a  nondegenerate $2k$-space. If $k \geqs 3$ and $(k,q) \ne (3,2)$, or if $k=2$ and $q \geqs 4$, then $G_{\a,\b}$ has a composition factor ${\rm U}_{k}(q)$. Suppose $(k,q) = (3,2)$, so $m \geqs 7$. If $m \geqs 10$ then $G_{\a,\b}$ has a composition factor ${\rm U}_{m-6}(2)$. Similarly, if $m =9$ then ${\rm U}_{4}(2)$ is a composition factor of $G_{\a',\b'}$, where $\a' = \la e_1,f_1,x\ra$ and $\b' = \la e_2,f_2,x\ra$, and for $m \in \{7,8\}$ one checks that $b(G) \leqs 3$. Next assume $k=2$ and $q \leqs 3$. If $m \geqs 7$  and $(m,q) \ne (7,2)$ then ${\rm U}_{m-4}(q)$ is a composition factor of $G_{\a,\b}$ and one checks that $b(G) \leqs 4$ if $m \in \{5,6\}$ or if $(m,q) = (7,2)$.

Finally, suppose $k=1$ and note that $H$ is solvable if $m=3$. For $m \geqs 4$ we see that $G_{\a,\b}$ is solvable if and only if $(m,q) = (5,2)$ or $m=4$ and $q \leqs 3$; in each of these cases, one can use {\sc Magma} to verify the bound $b(G) \leqs 5$.

\vs

\noindent \emph{Case 3. Symplectic groups.}

\vs

Let $G_0 = {\rm PSp}_{m}(q)'$ with $m \geqs 4$ and fix a standard basis $\{e_1, \ldots, e_{m/2}, f_1, \ldots, f_{m/2}\}$ for $V$ (see \cite[Proposition 2.4.1]{KL}). In view of the isomorphisms ${\rm PSp}_{4}(2)' \cong A_6$ and ${\rm PSp}_{4}(3) \cong {\rm U}_{4}(2)$, we may assume $q \geqs 4$ if $m=4$.

First assume $H$ is a parabolic subgroup of type $P_k$, where $1 \leqs k \leqs m/2$. Set $\a = \la e_1, \ldots, e_k \ra$ and $\b = \la f_1, \ldots, f_k \ra$. If $k=1$  then $G_{\a,\b}$ has a composition factor ${\rm PSp}_{m-2}(q)'$. Similarly, if $k \geqs 3$, or if $k=2$ and $q \geqs 4$ then ${\rm L}_{k}(q)$ is a composition factor of $G_{\a,\b}$. Finally, assume $k=2$ and $q \leqs 3$. If $m \geqs 8$ then ${\rm PSp}_{m-4}(q)'$ is a composition factor of $G_{\a,\b}$, while for $m=6$ it is easy to check that $b(G) \leqs 4$.

Now assume $H$ is the stabilizer of a nondegenerate $k$-space, where $2 \leqs k < m/2$ is even. Set $\a = \la e_1, \ldots, e_{k/2}, f_1, \ldots, f_{k/2}\ra$ and $\b = \la e_{k/2+1}, \ldots, e_{k}, f_{k/2+1}, \ldots, f_{k}\ra$. If $k \geqs 4$, or if $k=2$ and $q \geqs 4$ then ${\rm PSp}_{k}(q)'$ is a composition factor of $G_{\a,\b}$. Similarly, if $k=2$ and $q \leqs 3$ then we quickly reduce to the case $m=6$, where a straightforward {\sc Magma} computation gives $b(G) \leqs 4$.

To complete the analysis of symplectic groups, we may assume $q$ is even and $H$ is of type ${\rm O}_{m}^{\e}(q)$. Note that if we regard $G_0$ as the isomorphic orthogonal group ${\rm O}_{m+1}(q)$, then we may identify $\O$ with the set of nondegenerate $m$-dimensional subspaces of type $\e$ of the natural module $W$ for ${\rm O}_{m+1}(q)$ (recall that a nondegenerate $2\ell$-dimensional subspace of an orthogonal space is of \emph{plus-type} if it contains a totally singular $\ell$-space, otherwise it is a \emph{minus-type} space). Here ${\rm O}_{m+1}(q)$ is the isometry group of a nonsingular quadratic form $Q$ on $W$ with a $1$-dimensional radical $\la v \ra$ (note that $v$ is nonsingular and we may assume $Q(v)=1$). An $m$-dimensional subspace of $W$ is nondegenerate if and only if it does not contain $v$.

Fix a basis
\[
\{e_1, \ldots, e_{m/2-1}, f_1, \ldots, f_{m/2-1},x,y,v\}
\]
for $W$, where $\la e_1, \ldots, e_{m/2-1},f_1, \ldots, f_{m/2-1}\ra$ and $\la x,y \ra$ are nondegenerate spaces of plus-type and $\e$-type, respectively. Define $\a,\b \in \O$ by setting
\begin{align*}
\a & = \la e_1, \ldots, e_{m/2-1}, f_1, \ldots, f_{m/2-1},x,y\ra \\
\b & =  \la e_1+v, e_{2}, \ldots, e_{m/2-1}, f_1, \ldots, f_{m/2-1},x,y\ra.
\end{align*}
Then the pointwise stabilizer $(G_0)_{\a,\b}$ visibly has a section isomorphic to $\O_{m-2}^{\e}(q)$ (acting trivially on $\la e_1,f_1,v\ra$), so we may assume that either $m=4$ or $(m,q,\e) = (6,2,+)$.
In the latter case, $G$ is $2$-transitive, $G_{\a,\b} = 2^4.{\rm L}_{2}(2)^2.2$ is solvable and with the aid of {\sc Magma} one checks that $b(G) = 6$ (this is the special case recorded in the statement of the proposition).

Finally, let us assume $m=4$, so $q \geqs 4$. We claim that there exists $\a,\b \in \O$ such that $(G_0)_{\a,\b}$ has a section isomorphic to ${\rm L}_{2}(q)$. To see this, first assume $\e=+$, in which case $\la x,y \ra$ is a plus-type $2$-space and we may assume that $Q(x) = Q(y) = 0$ and $Q(x+y) = 1$. Choose $\xi \in \mathbb{F}_q^{\times}$ such that the polynomial $t^2+t+\xi^2 \in \mathbb{F}_q[t]$ is reducible. Then the $4$-spaces 
\[
\a = \la e_1, f_1, x, y \ra,\;\; \b = \la e_1,f_1,x+y,x+\xi v\ra
\]
are contained in $\O$ and by considering the common $3$-space $\la e_1,f_1,x+y\ra$ we deduce that
${\rm O}_{3}(q) \leqs (G_0)_{\a,\b}$. A similar argument applies when $\e=-$. Here $\la x,y \ra$ is a minus-type $2$-space and we are free to assume that $Q(x) = 1$ and $Q(y) = Q(x+y)=\xi$, where $t^2+t+\xi \in \mathbb{F}_q[t]$ is irreducible. Choose $\l \in \mathbb{F}_q^{\times}$ so that $t^2+t+\xi+\l^2 \in \mathbb{F}_q[t]$ is also irreducible. Then 
\[
\a = \la e_1,f_1,x,y \ra,\;\; \b = \la e_1,f_1,x, y+\l v \ra
\]
are spaces in $\O$ and once again we have ${\rm O}_{3}(q) \leqs (G_0)_{\a,\b}$. The result follows.

\vs

\noindent \emph{Case 4. Odd-dimensional orthogonal groups.}

\vs

Next we assume $G_0 = \O_m(q)$, where $mq$ is odd and $m \geqs 7$. Write $m=2\ell+1$ and fix a standard basis
$\{e_1, \ldots, e_{\ell}, f_1, \ldots, f_{\ell},x\}$
for $V$ as in \cite[Proposition 2.5.3(iii)]{KL}.

First let $H$ be a parabolic subgroup of type $P_k$ with $1 \leqs k < m/2$. With respect to a standard basis for $V$, set $\a = \la e_1, \ldots, e_k\ra$ and $\b = \la f_1, \ldots, f_k \ra$. Then $G_{\a,\b}$ has a composition factor ${\rm L}_{k}(q)$ if $k \geqs 3$, and $\O_{m-2k}(q)$ if $k \leqs 2$ and $(m,k,q) \ne (7,2,3)$. Finally, we note that $H$ is solvable when $(m,k,q) = (7,2,3)$.

Now suppose $H$ is the stabilizer of a nondegenerate $k$-space of type $\e$ with $1 \leqs k < m/2$. Note that if $k$ is odd then $G$ has two orbits on the set of all nondegenerate $k$-spaces; the spaces in the two orbits are distinguished by the discriminant of the restriction of the defining quadratic form on $V$, which is either a square or nonsquare (see \cite[p.32]{KL}). The respective actions of $G$ are permutation isomorphic, so without loss of generality we may assume that if $k$ is odd then $\O$ is the set of nondegenerate $k$-spaces with square discriminant. On the other hand, if $k$ is even then we identify $\O$ with the set of all nondegenerate $k$-spaces of type $\e = \pm$.

Fix a nondegenerate $k$-space $\a \in \O$ and choose $\b \in \O$ such that $\a\perp \b$ is a nondegenerate $2k$-space. If $k \geqs 5$, then ${\rm P\O}_{k}^{\e}(q)$ is a composition factor of $G_{\a,\b}$. Similarly, if $k \in \{3,4\}$ then we may assume $q=3$, with $\e=+$ if $k=4$. Suppose $(k,q,\e) = (4,3,+)$. If $m \geqs 13$ then $G_{\a,\b}$ has a composition factor $\O_{m-8}(3)$, so we can assume $m \in \{9,11\}$. If $m=11$ then $\O_5(3)$ is a composition factor of $G_{\a,\b'}$, where $\a = \la e_1,e_2,f_1,f_2\ra$ and $\b' = \la e_1,e_3,f_1,f_3\ra$, and for $m=9$ we calculate that $b(G)=2$.

To complete the analysis we may assume $k \in \{1,2\}$ or $k=q=3$. Suppose $k=q=3$ and set $\a = \la e_1,f_1,x\ra$ and $\b' = \la e_2,f_2,x\ra$. If $m \geqs 11$ then $G_{\a,\b'}$ has a composition factor of type $\O_{m-5}^{\e'}(3)$, while a {\sc Magma} computation gives $b(G) \leqs 3$ if $m \in \{7,9\}$. Finally, if $k \in \{1,2\}$ then $G_{\a,\b}$ has a composition factor $\O_{m-2k}(q)$ if $(m,k,q) \ne (7,2,3)$, otherwise $\e=-$ (see \cite[Table 8.39]{BHR}) and $b(G)=3$.

\vs

\noindent \emph{Case 5. Even-dimensional orthogonal groups.}

\vs

To complete the proof of the proposition, we may assume $G_0 = {\rm P\O}_{m}^{\e}(q)$ with $m \geqs 8$ even. Write $m=2\ell$ and fix standard bases
\[
\left\{\begin{array}{ll}
\{e_1, \ldots, e_{\ell}, f_1, \ldots, f_{\ell}\} & \mbox{if $\e=+$} \\
\{e_1, \ldots, e_{\ell-1}, f_1, \ldots, f_{\ell-1},x,y\} & \mbox{if $\e=-$}
\end{array}\right.
\]
as in \cite[Proposition 2.5.3]{KL}.

To begin with, let us assume $H$ is a parabolic subgroup of type $P_k$. If $k<m/2$ then we may identify $\O$ with the set of all totally singular $k$-dimensional subspaces of $V$. However, if $k=m/2$ then $\e=+$ and $G_0$ has two orbits on the set of totally singular $k$-dimensional subspaces of $V$ (see \cite[Lemma 2.5.8]{KL}). Without loss of generality, in the latter case we may assume that $\O$ is the set of totally singular $k$-spaces $\b$ such that $k - \dim (\a \cap \b)$ is even (see \cite[Lemma 2.5.8]{KL}), where $\a = \la e_1, \ldots, e_k \ra$.

First assume $k=m/2$, so $\e=+$. If $k$ is odd then $G_{\a,\b}$ has a composition factor ${\rm L}_{k-1}(q)$, where $\b = \la e_1,f_2, \ldots, f_k\ra$. Similarly, if $k$ is even and we set $\b = \la f_1, \ldots, f_k\ra$ then ${\rm L}_{k}(q)$ is a composition factor of $G_{\a,\b}$. The same conclusion holds (with $\a$ and $\b$ defined in the same way) if $(\e,k) \ne (+,m/2)$ and either $k \geqs 3$, or $k=2$ and $q \geqs 4$. If $k=1$ then $G_{\a,\b}$ has a composition factor ${\rm P\O}^{\e}_{m-2}(q)$. Finally, suppose $k=2$ and $q \leqs 3$. If $m \geqs 10$ or $(m,\e)=(8,-)$ then ${\rm P\O}_{m-4}^{\e}(q)$ is a composition factor of $G_{\a,\b}$, whereas $H$ is solvable if $(m,\e) = (8,+)$.

Next suppose $H$ is the stabilizer of a nondegenerate $k$-space of type $\e'$, where $1 \leqs k \leqs m/2$. Note that if $k$ is odd then $q$ is odd and $G$ has two orbits on the set of nondegenerate $k$-dimensional subspaces of $V$; as in Case 4, we may assume $\O$ comprises the spaces with square discriminant. For $k$ even, we identify $\O$ with the set of all nondegenerate $k$-spaces of type $\e'$. In addition, observe that  $k=m/2$ only if $m \equiv 0 \imod{4}$ and $\e=-$ (see \cite[Tables 3.5.E, 3.5.F]{KL}, for example), in which case we are free to assume that $\e'=+$.

Let us begin by handling the special case where $k=m/2$, so $m \equiv 0 \imod{4}$, $\e=-$ and $\e'=+$. Fix $\a = \la e_1, \ldots, e_{m/4}, f_1, \ldots, f_{m/4}\ra \in \O$. If we set
\[
\b = \la e_1, \ldots, e_{m/4-1},e_{m/4+1}, f_1, \ldots, f_{m/4-1}, f_{m/4+1}\ra \in \O
\]
then $G_{\a,\b}$ has a composition factor ${\rm P\O}_{m/2-2}^{-}(q)$ if $m \geqs 12$. Now assume $m=8$. If $q \leqs 3$ then a calculation with {\sc Magma} shows that $b(G) \leqs 3$. On the other hand if $q \geqs 4$ and $\b'=\la e_1,e_2,f_1+x,f_2+x\ra$, then one checks that $G_{\a,\b'}$ has a section isomorphic to ${\rm L}_{2}(q)$.

For the remainder, we may assume $k < m/2$ and $\O$ is the set of nondegenerate $k$-dimensional subspaces of $V$ of type $\e'$. Fix $\a,\b \in \O$ such that $\a \perp \b$ is a nondegenerate $2k$-space (note that this space has plus-type if $k$ is even). If $k \geqs 5$ then $G_{\a,\b}$ has a composition factor ${\rm P\O}_{k}^{\e'}(q)$, so we may assume $k \leqs 4$. Similarly, if $k=4$ (with $q \geqs 4$ if $\e'=+$) then $G_{\a,\b}$ is nonsolvable, so let us assume $(k,\e')=(4,+)$ and $q \leqs 3$. Set $\a = \la e_1,e_2,f_1,f_2\ra$ and $\b' = \la e_1,e_3,f_1,f_3\ra$. Then $G_{\a,\b'}$ has a composition factor ${\rm P\O}_{m-6}^{\e}(q)$ unless $(m,\e) = (10,+)$, in which case a {\sc Magma} computation gives $b(G) \leqs 3$.

Next suppose $k=3$, so $q$ is odd. If $q \geqs 5$, then $G_{\a,\b}$ has a composition factor $\O_{3}(q)$, so we may assume $q=3$. If $m \geqs 12$ then ${\rm P\O}_{m-6}^{\e''}(q)$ is a composition factor of $G_{\a,\b}$ (the sign $\e''$ depends on the type of the nondegenerate $6$-space $\a \perp \b$). If $m \in \{8,10\}$ then with the aid of {\sc Magma} one checks that $b(G) \leqs 3$.

Finally, let us assume $k \in \{1,2\}$. If $k=1$, or if $k=2$ and $(m,\e) \ne (8,+)$, then $G_{\a,\b}$ has a composition factor ${\rm P\O}_{m-2k}^{\e''}(q)$, where $\e''=\e$ if $k=2$. Similarly, if $(m,k,\e) = (8,2,+)$ and $q \geqs 4$ then $G_{\a,\b}$ is nonsolvable (it has an ${\rm L}_{2}(q)$ composition factor). Finally, if $(m,k,\e) = (8,2,+)$ and $q \leqs 3$ then a {\sc Magma} computation gives $b(G) \leqs 4$.

To complete the proof, we may assume $q$ is even and $H$ is the stabilizer of a nonsingular $1$-space. By identifying $\O$ with the set of all such subspaces of $V$, we may choose $\a,\b \in \O$ so that $\a \oplus \b$ is a nondegenerate $2$-space of  minus-type  and thus $\O_{m-2}^{-\e}(q)$ is a composition factor of $G_{\a,\b}$.
\end{proof}

\vs

This completes the proof of Theorem \ref{t:4} for almost simple groups.

\subsection{Diagonal type groups}

\begin{prop}\label{p:fn3}
Let $G \leqs {\rm Sym}(\O)$ be a finite primitive group of diagonal type with point stabilizer $H$. If every $2$-point stabilizer in $G$ is solvable, then $b(G) \leqs 4$.
\end{prop}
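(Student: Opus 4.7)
The plan is to work explicitly with the natural coset action of the socle. Write $N = T^k = {\rm soc}(G)$ with $T$ a nonabelian simple group and $k \geqs 2$, and identify $\Omega$ with $N/D$, where $D = \{(t,\ldots,t) : t \in T\} \cong T$ is the diagonal subgroup. After translation, a pair of points can be written as $\alpha = D$ and $\beta = D \cdot (1,g_2,\ldots,g_k)$ for some nonidentity tuple $(g_2,\ldots,g_k) \in T^{k-1}$, and a direct calculation shows that $N_{\alpha,\beta}$ is the diagonal copy in $D$ of the centralizer $C_T(\la g_2,\ldots,g_k\ra)$.

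First I would extract the restriction on $T$ coming from the hypothesis. Since $N \leqs G$, the subgroup $N_{\alpha,\beta}$ lies inside the (assumed solvable) $2$-point stabilizer $G_{\alpha,\beta}$ and is therefore solvable. Specializing to $g_2 = w$ and $g_i = 1$ for $i \geqs 3$ forces $C_T(w)$ to be solvable for every nontrivial $w \in T$. This is very restrictive on $T$: by the Suzuki--Thompson analysis of CIT- and N-groups, $T$ is confined to a short list, dominated by ${\rm PSL}_2(q)$ and ${\rm Sz}(q)$ with a handful of small-order exceptions, and in each case centralizers of nontrivial elements are abelian or otherwise very small.

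Next I would construct a base of size at most $4$ explicitly. Using the $2$-generation of $T$, pick $x, y \in T$ with $\la x, y\ra = T$ and set $\beta = D \cdot (1,x,1,\ldots,1)$ and $\gamma = D \cdot (1,y,1,\ldots,1)$. Then
\[
N_{\alpha,\beta,\gamma} = C_T(\la x,y\ra) = Z(T) = 1,
\]
so the triple stabilizer $G_{\alpha,\beta,\gamma}$ embeds into the outer quotient $G/N \leqs {\rm Out}(T) \times S_k$. A fourth coset $\delta = D \cdot (1,z_2,\ldots,z_k)$ with the $z_i$ chosen in general position then suffices to kill this outer part: for $k \geqs 3$, Theorem~\ref{t:dist}(ii) applied to the quotient $P \leqs S_k$ (which lies in $\Gamma_d$ for bounded $d$ by the restrictions on $T$) provides coordinates distinguishing the $k$ factors, and a regular-orbit argument handles the residual ${\rm Out}(T)$-action.

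The principal obstacle is the simultaneous breaking of ${\rm Out}(T)$ and $P$ by the single point $\delta$: when both $|{\rm Out}(T)|$ and $k$ are small, a generic choice of coordinates may fail to separate them, and the residual finitely many exceptional configurations must be checked by direct computation in {\sc Magma}, using the tight control of $T$ obtained from the first step.
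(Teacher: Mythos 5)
Your approach is genuinely different from the paper's, and it has a real gap at its final step. The paper never classifies $T$; it splits on the top group $P_G \leqs S_k$ induced on the $k$ socle factors. If $P_G \ne A_k,S_k$ then $b(G)=2$ by \cite[Theorem 1.1]{fawcett}; if $P_G \geqs A_k$ and $k \leqs |T|^2$ then \cite[Theorem 1.2]{fawcett} already gives $b(G) \leqs 4$; and if $k > |T|^2$ with $P_G = A_k$ or $S_k$, the paper shows that the $2$-point stabilizer $H\cap H^g$ (with $g=(s,1,\ldots,1)$) contains $A_{k-1}$ --- because the $\pi$-part of such a stabilizer contains the stabilizer of $1$ in $P_G$ --- which contradicts solvability. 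The whole content of the proof is thus a bound on $k$ (or on $P_G$) coming from the hypothesis.

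Your Step~1 and Step~2 are correct: the diagonal of $C_T(\langle g_2,\ldots,g_k\rangle)$ is $N_{\alpha,\beta}$, so every element-centralizer in $T$ is solvable, and a generating pair $x,y$ of $T$ gives three points with $N_{\alpha,\beta,\gamma}=1$ so that $G_{\alpha,\beta,\gamma}$ embeds in $G/N \leqs {\rm Out}(T)\times P_G$. (Note however that classifying simple $T$ with all element-centralizers solvable is itself a nontrivial input, beyond ``CIT'' or ``N-group'' results as casually stated, and the paper does not need it.)

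The gap is in Step~3. You assert that $P_G$ ``lies in $\Gamma_d$ for bounded $d$ by the restrictions on $T$,'' and then appeal to Theorem~\ref{t:dist}(ii). But nothing in Step~1 restricts $P_G$ or $k$: you only examined the $N$-part $N_{\alpha,\beta}$ of a $2$-point stabilizer, which constrains $T$ and not the permutation group $P_G$. A priori $P_G = S_k$ with $k$ huge. In that case $d(P_G) = k$, and a single extra coset $\delta = D\cdot(1,z_2,\ldots,z_k)$ colours the $k$ coordinates by at most $|T|$ ``colours,'' so it cannot separate $P_G$ once $k > |T|$; the residual stabilizer would still contain a large alternating group, and a base of size $4$ is unreachable by your construction. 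To close this you must extract a bound on $k$ (or replace $P_G$ by a genuinely small group) from the solvability of $2$-point stabilizers, by looking at the $\pi$-part of $G_{\alpha,\beta}$ rather than just $N_{\alpha,\beta}$, which is precisely what the paper does. As written, your proof does not handle the case $P_G \geqs A_k$ with $k$ large, and this is the only case where anything needs to be proved.
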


\begin{proof}
Let $N= T^k$ be the socle of $G$ and write $N \leqs G \leqs N.({\rm Out}(T) \times S_k)$, where $T$ is a nonabelian simple group. Let $P_G$ be the subgroup of $S_k$ induced by the conjugation action of $G$ on the set of $k$ factors of $N$. The primitivity of $G$ implies that $G = NH$, so $H$ also induces $P_G$ on the factors of $N$. We may assume that
\[
H = \{ (a, \ldots, a)\pi \in G \,:\, a \in {\rm Aut}(T), \pi \in S_k \}.
\]

If $P_G \ne A_k, S_k$ then \cite[Theorem 1.1]{fawcett} gives $b(G) = 2$, so we may assume $P_G$ contains $A_k$. Here \cite[Theorem 1.2]{fawcett} gives
\begin{equation}\label{e:diag}
b(G) \leqs \max\left\{4, \left\lceil \frac{\log k}{\log |T|}\right\rceil+2\right\},
\end{equation}
which is at most $4$ if $k \leqs |T|^2$. For the remainder, we may assume that $k > |T|^2$ and $P_G = A_k$ or $S_k$.  Fix $1\ne s \in T$ and set $g = (s,1,\ldots, 1) \in N$, so
\[
H \cap H^g = \{(a, \ldots, a)\pi \in G \,:\, a \in C_{{\rm Aut}(T)}(s), \pi \in S_k, 1^{\pi}=1 \}
\]
is a $2$-point stabilizer. Let $r$ be the largest prime less than $|T|^2$. Since $|{\rm Out}(T)| < |T|$, it follows that $(r, |{\rm Out}(T)|) = 1$ and thus \cite[Lemma 3.11]{fawcett} implies that $N{:}A_k \leqs G$. Therefore, $A_{k-1} \leqs H \cap H^g$ and we conclude that $H \cap H^g$ is nonsolvable.
\end{proof}

\begin{rem}
The bound in Proposition \ref{p:fn3} is best possible. For example, $b(G) = 4$ for the diagonal type group $G = A_5^2.2^2$ of degree $60$ and we calculate that every $2$-point stabilizer in this group is solvable (indeed, the largest $2$-point stabilizer has order $16$).
\end{rem}

\subsection{Product type groups}

\begin{prop}\label{p:fn4}
Let $G \leqs {\rm Sym}(\O)$ be a finite primitive group of product type with point stabilizer $H$. If every $2$-point stabilizer in $G$ is solvable, then $b(G) \leqs 6$.
\end{prop}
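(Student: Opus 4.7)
The plan is to follow the strategy of Proposition \ref{p:prod}, applying the bound \eqref{e:prod}, but with sharper input obtained from the 2-point stabilizer hypothesis. Let $N = S^k$ be the socle of $G$, where $S$ denotes the socle of $L$ and $k \geqs 2$, and identify $\Omega = \Delta^k$.

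First I would establish that $S_\delta$ is solvable for every $\delta \in \Delta$. Fix distinct $\delta, \delta' \in \Delta$ and set $\alpha = (\delta, \delta, \ldots, \delta)$ and $\beta = (\delta', \delta, \ldots, \delta)$. Since $N \leqs G$, the 2-point stabilizer $G_{\alpha,\beta}$ contains $N_{\alpha,\beta} = S_{\delta,\delta'} \times S_\delta^{k-1}$; as $k-1 \geqs 1$ and $G_{\alpha,\beta}$ is solvable by hypothesis, $S_\delta$ must itself be solvable. This rules out the diagonal type case for $L$ (where $S = T_0^m$ with $T_0$ a nonabelian simple group, and $S_\delta$ would contain a diagonal copy of $T_0$), so $L$ must be almost simple. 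Since $L/S \into {\rm Out}(S)$ is solvable by Schreier's conjecture, $L_\delta$ is solvable too, and \cite{Bur20} then gives $b(L) \leqs 5$.

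Next I would repeat the construction from Proposition \ref{p:prod}: fix a base $\{\delta_1, \ldots, \delta_b\} \subseteq \Delta$ for $L$ with $b = b(L) \geqs 2$, set $\alpha_i = (\delta_i, \ldots, \delta_i) \in \Omega$, and define $Q = \bigcap_{i=1}^{b} G_{\alpha_i}$. Then $Q \leqs G_{\alpha_1, \alpha_2}$ is contained in a 2-point stabilizer of $G$, so $Q$ is solvable by hypothesis, and Theorem \ref{t:dist}(i) yields $d(Q) \leqs 5$. Since the smallest degree of a primitive almost simple action is $5$ (realized by $A_5$ on $5$ points), we have $|\Delta| \geqs 5$, so the bound \eqref{e:prod} gives
\[
b(G) \leqs \left\lceil \log_{|\Delta|} d(Q) \right\rceil + b(L) \leqs \lceil \log_5 5 \rceil + 5 = 6.
\]

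The main obstacle (and the main improvement over Proposition \ref{p:prod}) is the first step: translating the hypothesis on 2-point stabilizers of $G$ into solvability of $S_\delta$ rather than merely $S_{\delta,\delta'}$. The crucial observation is that when $\beta$ differs from $\alpha$ in only one coordinate, the pointwise stabilizer $N_{\alpha,\beta}$ already contains $k-1$ direct factors isomorphic to $S_\delta$; this forces $S_\delta$ solvable, which in turn lets us invoke the sharp bound $b(L) \leqs 5$ from \cite{Bur20} in place of the weaker $b(L) \leqs 6$ that Theorem \ref{t:4} (almost simple case, already proven) would give.
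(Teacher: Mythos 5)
Your proof is correct and is genuinely cleaner than the paper's. The key improvement is the choice of the second point $\beta = (\delta', \delta, \ldots, \delta)$, which differs from $\alpha$ in only one coordinate. The paper instead uses two constant tuples $(\delta_1,\ldots,\delta_1)$ and $(\delta_2,\ldots,\delta_2)$, which only yields that $S_{\delta_1,\delta_2}^k \leqs G_{\a_1,\a_2}$, hence only that $2$-point stabilizers in $L$ are solvable; the paper then has to split on whether $L$ is almost simple or diagonal type, invokes the almost simple case of Theorem \ref{t:4} to get $b(L) \leqs 6$, and has to treat $b(L) = 6$ (i.e.\ $L = {\rm Sp}_6(2)$) as a separate case where \eqref{e:prod} gives only $b(G) \leqs 7$ and a regular-orbit computation in {\sc Magma} is required to close the gap. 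Your single-coordinate perturbation instead forces $S_\delta^{k-1}$ into the $2$-point stabilizer, giving $S_\delta$ solvable directly; this both rules out $L$ of diagonal type (since there $S_\delta \cong T_0$ is simple nonabelian) and shows $L_\delta$ is solvable, so \cite{Bur20} applies and gives the sharp bound $b(L) \leqs 5$. With $b(L) \leqs 5$ and $Q$ solvable (as $Q \leqs G_{\a_1,\a_2}$ when $b(L) \geqs 2$), the bound \eqref{e:prod} gives $b(G) \leqs 6$ uniformly, eliminating both the diagonal-type branch and the $L = {\rm Sp}_6(2)$ computation from the paper's argument. Everything checks out: $N_{\a,\b} = S_{\delta,\delta'} \times S_\delta^{k-1}$ is correct, $k \geqs 2$ holds for product type, $L/S$ is solvable by Schreier, $|\Delta| \geqs 5$ for primitive almost simple $L$, and $Q$ being contained in a $2$-point stabilizer is precisely what makes Theorem \ref{t:dist}(i) applicable.
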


\begin{proof}
Here we adopt the same notation as in the proof of Proposition \ref{p:prod}. In particular, $N \leqs G \leqs L \wr P$, where $N = T^k$ is the socle of $G$ and $L \leqs {\rm Sym}(\Delta)$ is a primitive group with socle $T$, which is either almost simple or diagonal type. In addition, $P \leqs S_k$ is the transitive group induced by the conjugation action of $G$ on the $k$ factors of $N$ and we write $\O = \Delta_1 \times \cdots \times \Delta_k = \Delta^k$.

\vs

\noindent \emph{Case 1. $L$ is almost simple.}

\vs

First assume $L$ is almost simple and fix $\delta_1, \delta_2 \in \Delta$ with $\delta_1 \ne \delta_2$. Set $\a_i = (\delta_i, \ldots, \delta_i) \in \O$ for $i=1,2$. Then $(T_{\delta_1,\delta_2})^k \leqs G_{\a_1,\a_2}$, so $T_{\delta_1,\delta_2}$ is solvable and we deduce that $L_{\delta_1,\delta_2}$ is solvable (recall that $L/T \leqs {\rm Out}(T)$ is solvable). Therefore, every $2$-point stabilizer in $L$ is solvable and thus $b(L) \leqs 6$ from our work in Section \ref{ss:as}. By repeating the argument in the proof of Proposition \ref{p:prod} we see that
\[
Q = \{ (x_1, \ldots, x_k)\pi  \in G \,:\, \mbox{$x_i=1$ for all $i$} \}
\]
is the pointwise stabilizer in $G$ of a specific set of $b(L)$ elements in $\O$. Since $b(L) \geqs 2$ it follows that $Q$ is solvable and thus Theorem \ref{t:dist}(i) implies that $d(Q) \leqs 5$. Then by applying the upper bound in \eqref{e:prod} we deduce that $b(G) \leqs 6$ if $b(L) \leqs 5$ (since $|\Delta| \geqs 5$).

Therefore, we may assume $b(L)=6$, in which case Theorem \ref{t:4} implies that $L = {\rm Sp}_{6}(2)$, $K = L_{\delta} = {\rm O}_{6}^{+}(2)$ and $|\Delta|=36$. Here $L=T$ and thus $G = L \wr P$. In particular, $P=Q$ is solvable. In this case, the bound in \eqref{e:prod} gives $b(G) \leqs 7$ and further work is needed to show that $b(G) \leqs 6$. Let ${\rm reg}(L,6)$ be the number of regular orbits of $L$ with respect to the natural coordinatewise action of $L$ on $\Delta^6$. Since $d(P) \leqs 5$, \cite[Theorem 2.13]{BC} implies that $b(G) \leqs 6$ if ${\rm reg}(L,6) \geqs 5$. As in the proof of \cite[Theorem 8.2]{Bur20}, we have ${\rm reg}(L,6) \geqs 5t/|L|$, where
\[
t = |\{ (\delta,\lambda_1, \ldots, \lambda_5) \in \Delta^6 \,:\, \bigcap_i K_{\lambda_i} = 1\}|,
\]
so ${\rm reg}(L,6) \geqs 5$ if $t \geqs |L|$. This is a straightforward {\sc Magma} computation.

\vs

\noindent \emph{Case 2. $L$ is diagonal type.}

\vs

To complete the proof of the proposition, we may assume $L$ is a diagonal type group with socle $T=S^{\ell}$, where $\ell \geqs 2$ and $S$ is a nonabelian simple group. Recall that
\[
S^{\ell} \normeq L \leqs S^{\ell}.({\rm Out}(S) \times A),
\]
where $A \leqs S_{\ell}$ is the group induced by the conjugation action of $L$ on the $\ell$ factors of $T = S^{\ell}$ (note that $A$ is either primitive, or $\ell=2$ and $A=1$).

If $\ell=2$ then \cite[Theorem 1.2]{fawcett} gives $b(L) \leqs 4$ and by repeating the argument in Case 1 we deduce that $b(G) \leqs 6$ via \eqref{e:prod}. Now assume $\ell \geqs 3$. As explained in \cite[Section 4.3.2]{HLM}, there exist three elements in $\O$ whose pointwise stabilizer in $G$ is a permutation group $R \leqs {\rm Sym}(\Lambda)$ with $|\Lambda| = k\ell$ such that
\begin{equation}\label{e:R}
b(G) \leqs 2\frac{\log d(R)}{\log |S|}+4.
\end{equation}
Since $R$ is solvable, Theorem \ref{t:dist}(i) gives $d(R) \leqs 5$ and the result follows since $|S| \geqs 60$.
\end{proof}

\subsection{Twisted wreath products}\label{ss:tw}

\begin{prop}\label{p:tw2}
Let $G \leqs {\rm Sym}(\O)$ be a finite primitive group of twisted wreath product type with point stabilizer $H$. If every $2$-point stabilizer in $G$ is solvable, then $b(G) \leqs 6$.
\end{prop}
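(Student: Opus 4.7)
Retain the notation of Proposition \ref{p:tw}: $N = T^k$ is the socle with $T$ a nonabelian simple group, $G = NH$ is a split extension with $N$ regular, $H \leqs S_k$ transitive, and $n = |T|^k$. Since $N$ is regular, identifying $\O$ with $N$ gives $G_{1, m} = C_H(m)$ for every $m \in N \setminus \{1\}$; indeed if $x \in H \cap H^{m^{-1}}$ then $[x,m] \in H \cap N = 1$. The hypothesis on $2$-point stabilizers therefore reads: $C_H(m)$ is solvable for every nontrivial $m \in N$. If $H$ itself is solvable, the main theorem of \cite{Bur20} gives $b(G) \leqs 5$, so I may assume $H$ nonsolvable.

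The plan is to apply the sharpened bound \eqref{e:faw}, namely $b(G) \leqs \lceil \log|G|/\log n \rceil + 3$. Since $\log|G|/\log n = 1 + \log|H|/(k \log|T|)$, the desired conclusion $b(G) \leqs 6$ follows as soon as $|H| \leqs |T|^{2k}$. For $k$ below a threshold of roughly $e \cdot 60^2 \approx 9784$, the crude estimate $|H| \leqs k!$ together with $|T| \geqs 60$ and Stirling's formula already delivers this. For larger $k$, I would argue that the centralizer hypothesis forces $H \in \Gamma_5$; Lemma \ref{l:2} then yields $|H| < 5^{k-1}$, which is well inside $|T|^{2k}$.

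To establish $H \in \Gamma_5$, suppose for a contradiction that $H$ admits an $A_5$-section, and set $H_0 \leqs H$ to be the stabilizer of a factor of $N$, with the twisting homomorphism $\phi \colon H_0 \to \mathrm{Aut}(T)$ satisfying $\mathrm{Inn}(T) \leqs \mathrm{im}(\phi)$. Using the transitivity of $H$ on the $k$ factors, the goal is to lift the $A_5$-section to a subgroup $A \leqs H$ and then exhibit $m \in N$ whose $H$-centralizer contains $A$, contradicting solvability of $C_H(m)$. A natural candidate is to take $m$ supported on the set of points fixed by $A$ under $H \leqs S_k$, with the coordinates on that support chosen generically enough that the induced twists by $\phi$ also stabilize them. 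The principal obstacle lies here: because the $H$-action on $N$ is twisted rather than pure coordinate permutation, a subgroup of $H$ fixing $S \subseteq \{1,\ldots,k\}$ pointwise may still act nontrivially on the $T$-coordinates indexed by $S$. Carefully balancing the choice of lift $A$ against the twisting data $\phi$ — or, alternatively, bounding the kernel $H \cap \mathrm{Aut}(T)^k$ via quantitative twisted wreath product tools of the kind used in \cite{fawcett_tw} to derive \eqref{e:faw} — is the technical crux of the argument.
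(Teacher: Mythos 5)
Your reduction of the $2$-point stabilizer condition to ``$C_H(m)$ is solvable for all $1 \ne m \in N$'' is correct, and \eqref{e:faw} together with the observation $\log|G|/\log n = 1 + \log|H|/(k\log|T|)$ is the right tool. However, the route you take from there -- bounding $|H|$ via a case split on $k$, with $H \in \Gamma_5$ intended as the workhorse for large $k$ -- has a genuine gap that cannot be repaired: the hypothesis does \emph{not} force $H \in \Gamma_5$. In fact, Remark \ref{r:tw2} of the paper gives a primitive twisted wreath product with $T = A_5$, $k = 6$, $H = A_6$ and all $2$-point stabilizers solvable; since $A_5 \leqs A_6$, this $H$ is not in $\Gamma_5$. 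So the structural conclusion you are aiming to establish in the second case is false in general, and the obstacle you correctly flag (the twisting homomorphism obstructs centralizing a chosen $m$ by a lifted $A_5$) is not merely technical.

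The fix is to bound the $2$-point stabilizer $L = C_H(m)$ directly rather than passing through $|H|$. Since $L \leqs H \leqs S_k$ is a solvable permutation group of degree $k$, it lies in $\Gamma_5$ (a solvable group has no $A_5$-section), and Lemma \ref{l:2} gives $|L| < 5^{k-1}$ with no hypothesis at all on $k$. Combined with $|G:L| \leqs n^2$ (as $L$ is a $2$-point stabilizer), this yields
\[
\frac{\log|G|}{\log n} < 2 + \frac{(k-1)\log 5}{k\log|T|} \leqs 2 + \frac{(k-1)\log 5}{k\log 60} < 3,
\]
and \eqref{e:faw} gives $b(G) < 7$ uniformly, with no case split and no claim about the structure of $H$ beyond what the hypothesis literally hands you. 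In short: use the solvability of $L$, which is the hypothesis, not solvability-flavoured structure of $H$, which does not follow.
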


\begin{proof}
Here $G = T^k{:}H$, $H \leqs S_k$ is transitive and $n = |\O| = |T|^k$, where $T$ is a nonabelian simple group. Let $L \leqs H$ be a $2$-point stabilizer and observe that $|G:L| \leqs n^2$. Since $L \leqs S_k$ is solvable, Lemma \ref{l:2} implies that $|L|< 5^{k-1}$ and so by combining the upper bound on $b(G)$ in \eqref{e:faw} with the obvious bound $|T| \geqs 60$, we deduce that
\[
b(G) < \frac{\log |G|}{\log n}+4 < \frac{(k-1)\log 5}{k \log 60}+6 < 7
\]
and the result follows.
\end{proof}

\vs

This completes the proof of Theorem \ref{t:4}. We close this section with some further comments on twisted wreath products, as well as the affine groups excluded in Theorem \ref{t:4}.

\begin{rem}\label{r:tw2}
Let $G = T^k{:}H$ be a twisted wreath product, where $T$ is a nonabelian simple group and $H \leqs S_k$ is transitive. Recall that $H$ is nonsolvable. It is worth noting that there are primitive groups of this form with the property that every $2$-point stabilizer is solvable (we thank Michael Giudici for drawing our attention to \cite{GLPST} and the following example). Following \cite[Example 4.15]{GLPST}, we can construct a primitive twisted wreath product with $T = A_5$, $k=6$ and $H = A_6$. By \cite[Theorem 1.5]{GLPST}, the minimal subdegree of $G$ is $12$ and we conclude that every $2$-point stabilizer in $G$ is solvable (since every proper nonsolvable subgroup of $H$ has index $6$). In this particular example, \cite[Theorem 1.1]{fawcett_tw} implies that $b(G)=2$. Indeed, this result states that $b(G)=2$ whenever $H$ is a quasiprimitive subgroup of $S_k$ (there are only partial base size results when $H$ is imprimitive and it is worth noting that $b(G)$ can be arbitrarily large in general).
\end{rem}

\begin{rem}\label{r:fn}
Let $G = VH$ be an affine group. Here the condition on $2$-point stabilizers implies that $H_v$ is solvable for all nonzero vectors $v \in V$ and it is worth noting that there are genuine examples with $H$ nonsolvable and $b(G) = 5$. For example, if $G = 2^4{:}{\rm Sp}_{4}(2)$ then $G$ is $2$-transitive, every $2$-point stabilizer is isomorphic to $2^4{:}(S_4 \times S_2)$ and one checks that $b(G) = 5$. By inspecting the {\sc Magma} database of primitive groups, it is straightforward to check that $b(G) \leqs 5$ for all relevant affine groups of degree at most $4095$. In the general case, if there exists a nonzero vector $v$ such that $H_v$ acts completely reducibly on $V$, then a theorem of Halasi and Mar\'{o}ti \cite[Theorem 1.1]{HM} implies that $b(G) \leqs 5$. The case where no $2$-point stabilizer $H_v$ is completely reducible remains open.
\end{rem}

\section{Proof of Theorem \ref{t:5}}\label{s:fn}

In this final section we prove Theorem \ref{t:5}. Let $G \leqs {\rm Sym}(\O)$ be a finite primitive permutation group with point stabilizer $H$ and assume $G$ is not of affine type. Set $n = |\O|$. Fix positive integers $c,d$ with $d \geqs 5$ and assume every $c$-point stabilizer in $G$ is in $\Gamma_d$. Our aim is to establish the existence of a function $g(c,d)$, which is linear in $c$ and $d$, such that $b(G) \leqs g(c,d)$ for all primitive groups $G$ of this form.

Notice that if $L$ is a $c$-point stabilizer then $|G:L| \leqs n^c$. Since the main theorem of \cite{HLM} gives
\[
b(G) \leqs 2\frac{\log |G|}{\log n} + 24,
\]
it suffices to show that $|L| \leqs n^{h(c,d)}$ for some function $h$ (linear in $c$ and $d$).

\begin{rem}
Let $c,d$ be positive integers as above. Then there exists an almost simple primitive group $G$ such that every $c$-point stabilizer is in $\Gamma_d$ and $b(G) = c+d-2$. For example, take the natural action of $G = S_{n}$ with $n=c+d-1$.
\end{rem}

\begin{prop}\label{p:fn2}
The conclusion to Theorem \ref{t:5} holds when $G$ is almost simple.
\end{prop}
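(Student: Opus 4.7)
The plan is to follow the Aschbacher-O'Nan-Scott case analysis used in the proof of Proposition \ref{p:as}, adapted to the weaker hypothesis on $c$-point stabilizers. By the reduction stated just above, it suffices to show $|L| \leq n^{h(c,d)}$ for every $c$-point stabilizer $L$, with $h$ linear in $c$ and $d$. If the action is nonstandard in the sense of \cite{BLSh}, then $b(G) \leq 7$ by the main theorem of \cite{BLSh}, so $|L| \leq |G| \leq n^7$ and we are done with $h = 7$. I therefore focus on standard actions, treating the alternating and classical cases in turn.

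For $G_0 = A_m$ with standard action, $H$ is either intransitive or imprimitive. In the intransitive action on $k$-subsets, a $c$-point stabilizer $L$ is a Young subgroup determined by the partition of $\{1, \ldots, m\}$ generated by the chosen $c$ subsets. Considering $c$ subsets of the form $A_i = \{1, \ldots, k-1, k+i-1\}$ for $i = 1, \ldots, c$, the resulting partition has atoms of sizes $k-1$, the $c$ singletons $\{k\}, \ldots, \{k+c-1\}$, and $m - k - c + 1$; the hypothesis $L \in \Gamma_d$ forces each atom to have size less than $d$, hence $m \leq 2d + c - 2$. The bound $b(G) \leq m - 1$ from \cite[Corollary 4.3]{Halasi} then yields a base size linear in $c$ and $d$. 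In the imprimitive action on uniform partitions, taking $c$ partitions that each differ from a base partition by one swap produces at most $2c$ affected blocks; the remaining $m/k - 2c$ unaffected blocks contribute a factor of $S_{m/k - 2c}$ in $L$, which being in $\Gamma_d$ forces $m/k \leq d + 2c$. Similarly $k \leq d$, so $m = O(d^2 + cd)$, and the HLM bound gives $b(G) = O(\log(c+d))$, which is linear in $c$ and $d$.

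For classical $G_0$ with natural module $V$ acting standardly, the $c$-point stabilizer $L$ preserves $c$ subspaces $U_1, \ldots, U_c$ of a common dimension $m$ and geometric type (or the analogous data in the case of ${\rm Sp}$ with point stabilizer of type ${\rm O}^{\pm}$ in even characteristic). Hence $L$ contains, up to outer and unipotent contributions, a classical group of appropriate type on a complement to $W = U_1 + \cdots + U_c$, of dimension at least $\dim V - cm$. Applying Lemma \ref{l:3} to this group forces $\dim V \leq cm + kd$ with $k \in \{1, 2\}$. Combining this with the ratio estimate $\log |G|/\log n = O(\dim V^2/(m(\dim V - m))) = O(\dim V/m) = O(c + d/m)$ and the HLM bound $b(G) \leq 2\log|G|/\log n + 24$ then gives $b(G) = O(c + d)$ across all regimes of $m$. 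The main obstacle will be executing this case analysis uniformly: the various classical subspace actions require careful attention to the Levi decomposition and the precise form of Lemma \ref{l:3}, and small-parameter cases (small $m$, $q$, or $\dim V$) are likely to demand direct verification along the lines of the detailed arguments in Proposition \ref{p:as}.
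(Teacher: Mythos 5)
Your proposal follows essentially the same approach as the paper: reduce to standard actions via \cite{BLSh}, then in each standard case choose $c$ points in $\Omega$ whose pointwise stabilizer contains a large alternating or classical section, derive a bound on the relevant parameter from the $\Gamma_d$ hypothesis (using Lemma \ref{l:3} in the classical case), and conclude with a known base-size bound from \cite{Halasi}, \cite{BCN} or \cite{HLM}. One point requires care: when $c$ is large relative to the other parameters your specific configuration of $c$ points is not available (for instance $\{1,\ldots,k-1,k+i-1\}$ with $1\le i\le c$ requires $c\le m-k+1$, and an analogous constraint on $c$ relative to $\dim V / k$ arises in the classical subspace actions), and the paper disposes of these regimes first with a crude estimate — $|L|\le |H|< n^{2(m-k)}\le n^{2c}$ for $k$-subsets, and $b(G)\le \dim V/k + 14 < 4c+14$ in the classical case — before invoking the carefully chosen $c$-point configuration; you would need to supply this split, and for uniform partitions the paper's use of the \cite{BCN} bound $b(G)\le \max\{6,\lceil\log_{m/k}k\rceil+3\}$ (needing only $k\le d$) is rather cleaner than your route through HLM, though both work.
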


\begin{proof}
Let $G \leqs {\rm Sym}(\O)$ be an almost simple group with socle $G_0$. Recall that if $G$ is non-standard, then $b(G) \leqs 7$, so we may assume $G$ is standard.

First assume $G_0 = A_m$ and $\O$ is the set of $k$-element subsets of $\{1,\ldots, m\}$, where $1 \leqs k < m/2$. Note that if $L$ is a $c$-point stabilizer, then
\[
|L| \leqs |H| \leqs |S_{m-k}||S_k| < (m-k)!^2 \leqs (m-k)^{2(m-k)} \leqs n ^{2(m-k)}.
\]
If $c \geqs m-k$ then $|L| \leqs n^{2c}$ and the result follows. On the other hand, if $c<m-k$ then the pointwise stabilizer of the $k$-sets
$\a_i = \{1,\ldots, k-1,k+i\}$ with $i=0,1, \ldots, c-1$ clearly contains $A_{m-k-c+1}$. This implies that $m-k \leqs c+d-2$ and we deduce that $|L| \leqs n^{2(c+d-2)}$, which gives the desired result.

Next suppose $G_0 = A_m$ and $\O$ is the set of partitions of $\{1, \ldots, m\}$ into $m/k$ parts of size $k$, where $1 < k \leqs m/2$ and $k$ divides $m$. Recall that the main theorem of \cite{BCN} gives
\[
b(G) \leqs \max\left\{6, \left\lceil \log_{m/k} k \right\rceil+3\right\}
\]
and so we may assume that $c \leqs k$.  Visibly, the point stabilizer of the following partitions
\[
\a_i = \left\{ \{1, \ldots, k-1, k+i\}, \{k,k+1, \ldots, 2k\} \setminus \{k+i\}, \ldots \right\} \in \O
\]
for $i=0,1, \ldots, c-1$ contains $A_{k-1}$ (note that all of these partitions only differ in the first two parts). Therefore, $k \leqs d$ and the argument in this case is complete.

To complete the proof, we may assume $G_0$ is a classical group in a subspace action. Let $V$ be the natural module for $G_0$ and set $m = \dim V$. There are three cases to consider:
\begin{itemize}\addtolength{\itemsep}{0.2\baselineskip}
\item[{\rm (a)}] $\O$ is a set of $k$-dimensional subspaces of $V$;
\item[{\rm (b)}] $G_0 = {\rm L}_{m}(q)$ and $\O$ is a set of pairs of subspaces $\{U,W\}$ of $V$ such that $\dim U = k$, $\dim W = m-k$ and either $V = U \oplus W$ or $U \subseteq W$;
\item[{\rm (c)}] $G_0 = {\rm Sp}_{m}(q)$, $q$ is even and $H \cap G_0 = {\rm O}_{m}^{\e}(q)$.
\end{itemize}

First consider (a). By combining \cite[Theorem 3.3]{HLM} and the proof of \cite[Theorem 3.1]{HLM}, we deduce that $b(G) \leqs m/k+14$ and so we may assume $m$ is large and $c \leqs m/4k$. Then we can choose $c$ spaces in $\O$ such that the pointwise stabilizer in $G_0$ of these spaces has a section isomorphic to a simple classical group with a natural module of dimension $\lfloor m/4\rfloor$. For example, suppose $G_0 = {\rm PSp}_{m}(q)$ and $\O$ is the set of $k$-dimensional nondegenerate subspaces of $V$, so $k$ is even. Let $\{e_1, \ldots, e_{m/2}, f_1, \ldots, f_{m/2}\}$ be a standard basis for $V$ and consider the $k$-spaces
\[
U_i = \la e_{1+ik/2}, \ldots, e_{(i+1)k/2}, f_{1+ik/2}, \ldots, f_{(i+1)k/2} \ra \in \O
\]
with $i=0,1, \ldots, c-1$. Then the point stabilizer of these $k$-spaces contains a subgroup ${\rm PSp}_{m-ck}(q)$ acting trivially on the $ck$-space $U_0 \perp \cdots \perp U_{c-1}$ and the claim follows since $m-ck \geqs m/4$. A similar argument applies in the remaining cases. Finally, Lemma \ref{l:3} now implies that $\lfloor m/4\rfloor \leqs 2d$ and the result follows.

A very similar argument applies in case (b) and we omit the details. By identifying ${\rm Sp}_{m}(q)$ with the orthogonal group ${\rm O}_{m+1}(q)$, case (c) can also be handled in a similar fashion, noting that $b(G) \leqs m+4$ by the proof of \cite[Theorem 3.1]{HLM}. To explain how this plays out, we proceed as in Case 3 in the proof of Proposition \ref{p:6class}. Let $W$ be the natural module for ${\rm O}_{m+1}(q)$ with corresponding quadratic form $Q$ and let $\la v \ra$ be the radical of the associated bilinear form. Note that $v$ is nonsingular with respect to $Q$, so $Q(v) \ne 0$. Identify $\O$ with the set of nondegenerate $m$-dimensional subspaces of $W$ of type $\e$ and recall that an $m$-space is nondegenerate if and only if it does not contain $v$. Also recall that we may assume $c \leqs m/4$ and $m$ is large. Fix a basis
\[
\{e_1, \ldots, e_{m/2-1}, f_1, \ldots, f_{m/2-1},x,y,v\}
\]
for $W$, where $\la e_1, \ldots, e_{m/2-1},f_1, \ldots, f_{m/2-1}\ra$ and $\la x,y \ra$ are nondegenerate spaces of plus-type and $\e$-type, respectively. Set
\[
U_0 = \la e_1, \ldots, e_{m/2-1}, f_1, \ldots, f_{m/2-1},x,y\ra \in \O
\]
and for $i = 1, \ldots, c-1$ define
\[
U_i = \la e_1, \ldots, e_{i-1}, e_i+v, e_{i+1}, \ldots, e_{m/2-1}, f_1, \ldots, f_{m/2-1},x,y\ra \in \O.
\]
Then the pointwise stabilizer in $G_0$ of the spaces $U_0, \ldots, U_{c-1}$ visibly has a section isomorphic to $\O_{m-2c+2}^{\e}(q)$ (acting trivially on the plus-type space $\la e_1, \ldots, e_{c-1}, f_1, \ldots, f_{c-1}\ra$) and we complete the proof by arguing as above.
\end{proof}

\begin{prop}\label{p:fn1}
The conclusion to Theorem \ref{t:5} holds when $G$ is a twisted wreath product.
\end{prop}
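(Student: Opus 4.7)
The plan is to mimic the strategy of Proposition \ref{p:tw2}, where the case $c=2$ of solvable $2$-point stabilizers was handled, and to carry it through for general $c$ using Lemma \ref{l:2} in place of the bound $|L| < 5^{k-1}$ that was used there.

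First I would fix a $c$-point stabilizer $L = G_{(\Delta)}$ where $\Delta = \{\omega_1,\dots,\omega_c\} \subseteq \O$ (recall $c \geqs 1$ in Theorem \ref{t:5}). Since $T^k$ acts regularly on $\O$, the point stabilizer $G_{\omega_1}$ is a complement to $T^k$ in $G$ and is therefore conjugate to $H$. Replacing $\Delta$ by a suitable $G$-translate we may assume $L \leqs H \leqs S_k$, so $L$ is a permutation group on $k$ letters. The hypothesis gives $L \in \Gamma_d$, and Lemma \ref{l:2} then yields $|L| < d^{k-1}$.

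Next I would feed this into the Fawcett-type bound \eqref{e:faw}, namely
\[
b(G) < \frac{\log |G|}{\log n} + 4.
\]
Since $L$ is the stabilizer of a set of size $c$, its orbit under $G$ has cardinality at most $n^c$, so $[G:L] \leqs n^c$ and hence $|G| \leqs |L| \cdot n^c < d^{k-1} n^c$. Using $n = |T|^k$ and $|T| \geqs 60$, this gives
\[
b(G) < \frac{(k-1)\log d}{k \log |T|} + c + 4 \;\leqs\; \frac{\log d}{\log 60} + c + 4,
\]
which is a function linear in $c$ and (more than) linear in $d$, completing the proof.

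There is no genuine obstacle here: the only small point to check is that a $c$-point stabilizer really does land (up to conjugacy) inside $H \leqs S_k$, which is immediate from regularity of the socle $T^k$. The proof is essentially a one-line generalization of Proposition \ref{p:tw2}: replace "solvable, hence $|L| < 5^{k-1}$" by "in $\Gamma_d$, hence $|L| < d^{k-1}$", and replace the trivial inequality $|G{:}L| \leqs n^2$ by $|G{:}L| \leqs n^c$.
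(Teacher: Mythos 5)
Your proof is correct and is essentially identical to the paper's argument: both reduce to the observation that a $c$-point stabilizer $L$ lies (up to conjugacy) in $H \leqs S_k$ by regularity of the socle, bound $|L| < d^{k-1}$ via Lemma \ref{l:2} and $|G:L| \leqs n^c$, and then feed $|G| < d^{k-1} n^c$ into \eqref{e:faw} to get $b(G) < \frac{\log d}{\log 60} + c + 4$. The only difference is that you spell out slightly more carefully why $L$ may be taken inside $H$; the bound you obtain and the route to it match the paper exactly.
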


\begin{proof}
It is easy to generalize the proof of Proposition \ref{p:tw2}. Write $G = T^k{:}H$, where $H \leqs S_k$ is transitive, $n = |\O| = |T|^k$ and $T$ is a nonabelian simple group. Let $L \leqs H$ be a $c$-point stabilizer and observe that $|G:L| \leqs n^c$. Since $L \leqs S_k$, Lemma \ref{l:2} implies that $|L|< d^{k-1}$ and thus \eqref{e:faw} yields
\[
b(G) < \frac{\log |G|}{\log n}+4 < \frac{(k-1)\log d}{k \log |T|}+c+4 < \frac{\log d}{\log 60}+c+4
\]
and the result follows.
\end{proof}

\begin{prop}\label{p:fn5}
The conclusion to Theorem \ref{t:5} holds when $G$ is of diagonal type.
\end{prop}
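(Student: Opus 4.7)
The plan is to adapt the strategy of Proposition \ref{p:fn3}, replacing the solvability condition on $2$-point stabilizers by the $\Gamma_d$ condition on $c$-point stabilizers. Let $N = T^k$ be the socle of $G$, where $T$ is a nonabelian simple group, and let $P_G \leqs S_k$ be the transitive subgroup induced by the conjugation action of $G$ on the $k$ factors of $N$. As in Proposition \ref{p:fn3}, we may assume
$H = \{(a,\ldots,a)\pi \in G : a \in {\rm Aut}(T),\ \pi \in S_k\}$.

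If $P_G$ does not contain $A_k$, then \cite[Theorem 1.1]{fawcett} immediately gives $b(G) = 2$, so assume $P_G \supseteq A_k$. Then \cite[Theorem 1.2]{fawcett} provides the bound \eqref{e:diag}, which is at most $4$ whenever $k \leqs |T|^2$. So the nontrivial case is $k > |T|^2$; here \cite[Lemma 3.11]{fawcett} yields $N{:}A_k \leqs G$, exactly as in the proof of Proposition \ref{p:fn3}.

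The key step is the following construction. For $i = 1,\ldots,c-1$, fix a nontrivial $s_i \in T$ and set $g_i = (1,\ldots,1,s_i,1,\ldots,1) \in N$ with $s_i$ in the $i$-th coordinate. Then $L := H \cap H^{g_1} \cap \cdots \cap H^{g_{c-1}}$ is the pointwise stabilizer of $c$ distinct points in $\O$ (distinctness holds because each product $g_j g_i^{-1}$ with $i \ne j$ has nontrivial coordinates in exactly the positions $i$ and $j$, so it is not diagonal and hence does not lie in $H$). A brief computation in $N \rtimes S_k$ shows that $g_i \pi g_i^{-1} = \pi$ for every $\pi \in A_k$ fixing position $i$; consequently every $\pi \in A_k$ fixing the set $\{1,2,\ldots,c-1\}$ pointwise lies in $L$, yielding $A_{k-c+1} \leqs L$. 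Since $L \in \Gamma_d$ by hypothesis, we deduce $k - c + 1 < d$, i.e.\ $k \leqs c + d - 2$. Substituting this back into \eqref{e:diag} and using $|T| \geqs 60$ gives
\[
b(G) \leqs \max\!\left\{4,\ \left\lceil \frac{\log(c+d-2)}{\log 60}\right\rceil + 2\right\},
\]
which is bounded above by a (in fact sublinear) function of $c$ and $d$, completing the proof.

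The only real technical point, rather than a serious obstacle, is verifying the embedding $A_{k-c+1} \leqs L$ via the semidirect product calculation; this is a direct generalization of the $c=2$ case already carried out in the proof of Proposition \ref{p:fn3}, so I do not anticipate any genuine difficulty.
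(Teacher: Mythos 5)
Your proposal is correct and follows essentially the same route as the paper's proof: reduce via \cite[Theorem 1.1]{fawcett} and \eqref{e:diag} to the case $k > |T|^2$ with $N{:}A_k \leqs G$, use the $c-1$ elements $g_i$ to exhibit $A_{k-c+1}$ inside a $c$-point stabilizer, conclude $k \leqs c+d-2$, and substitute back into \eqref{e:diag}. The only cosmetic difference is that you spell out the distinctness of the $c$ points and the semidirect-product calculation, whereas the paper leaves these to the reader by referring back to Proposition \ref{p:fn3}.
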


\begin{proof}
We adopt the notation from the proof of Proposition \ref{p:fn3}, so $G$ has socle $N=T^k$ and we have $N \leqs G \leqs N.({\rm Out}(T) \times S_k)$. By arguing as in the proof of Proposition \ref{p:fn3} we may assume that $k$ is large, $c \leqs k/4$ and $N{:}A_k \leqs G$.
Fix $1 \ne s \in T$ and define $g_i = (1, \ldots, 1, s, 1, \ldots, 1) \in N$ for $i = 1, \ldots, c-1$, where $s$ is the $i$-th coordinate of $g_i$. Then $A_{k-c+1}$ is contained in the $c$-point stabilizer $H \cap H^{g_1} \cap \cdots \cap H^{g_{c-1}}$ and thus $k \leqs c+d-2$. The result now follows by applying the upper bound on $b(G)$ in \eqref{e:diag}.
\end{proof}

\begin{prop}\label{p:fn6}
The conclusion to Theorem \ref{t:5} holds when $G$ is of product type.
\end{prop}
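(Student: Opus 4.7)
The plan is to adapt the argument of Proposition \ref{p:prod} (the $c=1$ case) to the $c$-point setting. I retain that proposition's notation: $N \leqs G \leqs L \wr P$ with socle $N = T^k$ and $\O = \Delta^k$, where $L \leqs \Sym(\Delta)$ is primitive with socle $T$ (almost simple or diagonal type) and $P \leqs S_k$ is transitive; every $g \in G$ has the form $(x_1, \ldots, x_k)\pi$. By \cite[2.2]{Kov}, I may assume that $G_1 = \{(x_1, \ldots, x_k)\pi \in G : 1^\pi = 1\}$ projects onto $L$ on the first coordinate. The edge case $|\Delta|<c$ is dispatched by the main theorem of \cite{HLM} applied to a direct order estimate, so I assume $|\Delta| \geqs c$.

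The central step is to transfer the $c$-point stabilizer hypothesis from $G$ on $\O$ to $L$ on $\Delta$. Fix distinct points $\delta_1, \ldots, \delta_c \in \Delta$ and set $\a_i = (\delta_i, \delta_1, \delta_1, \ldots, \delta_1) \in \O$ for $i=1,\ldots,c$; these are $c$ distinct elements of $\O$ differing only in the first coordinate. The pointwise stabilizer $M = G_{(\{\a_1,\ldots,\a_c\})}$ is in $\Gamma_d$ by hypothesis. For $c \geqs 2$, any $(x_1,\ldots,x_k)\pi \in M$ must fix coordinate $1$ under its $P$-component, since otherwise some $x_m$ would have to send each distinct $\delta_i$ to $\delta_1$, violating injectivity. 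Hence $M \leqs G_1$, with $x_1 \in L_{\delta_1,\ldots,\delta_c}$ and $x_j \in L_{\delta_1}$ for $j \geqs 2$. I next claim that $M$ projects onto $L_{\delta_1,\ldots,\delta_c}$ in the first coordinate: given $r$ in that subgroup, Kovacs' lemma supplies $(r,y_2,\ldots,y_k)\pi \in G_1$, and the transitivity of $T$ on $\Delta$ (\emph{not} $c$-transitivity) yields $t_j \in T$ with $t_j y_j \in L_{\delta_1}$, placing the corrected element $(1,t_2,\ldots,t_k)(r,y_2,\ldots,y_k)\pi$ in $M$. Thus $L_{\delta_1,\ldots,\delta_c}$ is a quotient of $M \in \Gamma_d$ and so itself belongs to $\Gamma_d$ by Lemma \ref{l:1}.

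With every $c$-point stabilizer of $L$ now known to lie in $\Gamma_d$, Propositions \ref{p:fn2} and \ref{p:fn5} furnish a linear function $g_0(c,d)$ with $b(L) \leqs g_0(c,d)$. Setting $b := \max(b(L), c)$, I select $b$ distinct points $\delta_1',\ldots,\delta_b' \in \Delta$ whose first $b(L)$ entries form an $L$-base, and form the diagonal points $\a_i' = (\delta_i', \ldots, \delta_i')$. Their common pointwise stabilizer $Q$ consists of pure permutation elements and is naturally identified with a subgroup of $P$; since $b \geqs c$, $Q$ is either a $c$-point stabilizer (when $b=c$) or contained in one (when $b=b(L)>c$), so $Q \in \Gamma_d$ and Theorem \ref{t:dist}(ii) gives $d(Q) \leqs d$. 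Applying the wreath-product bound from \cite[Lemma 3.8]{BS} and \cite[Lemma 2.1]{DHM} in the style of Proposition \ref{p:prod} then yields
\[
b(G) \leqs \lceil \log_{|\Delta|} d(Q) \rceil + b \leqs \log_2 d + \max\bigl(g_0(c,d),\,c\bigr),
\]
which is linear in $c$ and $d$, as required.

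The main obstacle is the transfer step. In the $c=1$ case one only needs transitivity of $T$ on $\Delta$ to correct the ``tail'' coordinates, but the naive version for $c \geqs 2$, working with the fully diagonal points $(\delta_i,\ldots,\delta_i)$ and trying to arrange $t_j y_j \in L_{\delta_1,\ldots,\delta_c}$, would demand $c$-transitivity of $T$ on $\Delta$, which generally fails. The device of giving the $c$ chosen points the common tail $(\delta_1,\ldots,\delta_1)$ confines the genuinely $c$-point constraint $L_{\delta_1,\ldots,\delta_c}$ to the first coordinate (where Kovacs' lemma can supply any desired element) while reducing the correction on each coordinate $j \geqs 2$ to the single-point condition $t_j y_j \in L_{\delta_1}$, which is handled by the ordinary transitivity of $T$.
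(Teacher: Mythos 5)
Your argument is correct in its main thrust but takes a genuinely different route from the paper's, and the two are worth contrasting.

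The paper also transfers the hypothesis from $G$ to $L$ when $L$ is almost simple and $c \leqs |\Delta|$, but by a simpler device: take the fully diagonal points $\a_i = (\delta_i,\ldots,\delta_i)$ and note that $(T_{\delta_1,\ldots,\delta_c})^k$ sits inside the $c$-point stabilizer $G_{(\{\a_1,\ldots,\a_c\})}$; closure of $\Gamma_d$ under subgroups then gives $T_{\delta_1,\ldots,\delta_c}\in\Gamma_d$, and closure under extensions gives $\bigcap_i L_{\delta_i}\in\Gamma_d$, using that $L/T$ is solvable. Your projection argument with the common-tail points $(\delta_i,\delta_1,\ldots,\delta_1)$ exhibits $\bigcap_i L_{\delta_i}$ as a quotient rather than extracting a normal piece of it, and it is a clever observation that the first coordinate absorbs the full $c$-point constraint while the tail coordinates require only ordinary transitivity of $T$. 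The real payoff of your version is that it does not need $L/T$ to be solvable, so it works verbatim for $L$ of diagonal type; this lets you treat Cases 1 and 2 of the paper uniformly by transferring to $L$ and invoking Propositions \ref{p:fn2} and \ref{p:fn5}. The paper instead handles diagonal $L$ by a separate argument: for $\ell=2$ it appeals to the absolute bound $b(L)\leqs 4$ from \cite{fawcett}, and for $\ell\geqs 3$ it never transfers to $L$ at all but uses the construction of \cite[Section 4.3.2]{HLM} directly on $G$, arriving at \eqref{e:R}. Your unified route is conceptually cleaner; the paper's is shorter in the almost simple case.

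Two points need tightening. First, your projection step requires $c\geqs 2$ (the injectivity argument that forces $1^\pi=1$ fails for $c=1$), so $c=1$ should be dispatched separately by citing Proposition \ref{p:prod}. Second, the edge case $|\Delta|<c$ is not really "a direct order estimate": $|G|\leqs n^c|R|$ for a $c$-point stabilizer $R$ is useless unless you know $R$ lives in the top group $P$ acting on $k$ points (giving $|R|<d^{k-1}$ by Lemma \ref{l:2}), and for that you must choose the $c$ points so that the first $b(L)$ of them are diagonal over a base of $L$. The paper handles this case instead by the distinguishing number bound \eqref{e:prod2} with $e=c$, which is essentially the same careful choice of points; your phrasing elides exactly the step that does the work.
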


\begin{proof}
We proceed as in the proof of Proposition \ref{p:fn4} and we freely adopt the notation therein,  which is consistent with the notation from the proof of Proposition \ref{p:prod}. In particular, we have $N \leqs G \leqs L \wr P$, where $N = T^k$ is the socle of $G$ and $L \leqs {\rm Sym}(\Delta)$ is a primitive group with socle $T$.

\vs

\noindent \emph{Case 1. $L$ is almost simple.}

\vs

First assume $L$ is almost simple and $c>|\Delta|$. Set $e=c$ and fix a base $\{\delta_1, \ldots, \delta_b\}$ for $L$, where $b=b(L)$. Choose $c$ distinct points $\a_1, \ldots, \a_c$ in $\O$ such that $\a_i = (\delta_i, \ldots, \delta_i)$ for $i = 1, \ldots, b$. Then
\[
R:=\bigcap_{i=1}^{c}G_{\a_i} \leqs \{(x_1, \ldots, x_k)\pi \in G \,:\, \mbox{$x_j=1$ for all $j$}\}
\]
is in $\Gamma_d$ and thus $d(R) \leqs d$
by Theorem \ref{t:dist}(ii). Finally, by applying \cite[Lemma 3.8]{BS} and \cite[Lemma 2.1]{DHM} we see that
\begin{equation}\label{e:prod2}
b(G) \leqs \left\lceil \log_{|\Delta|} d(R) \right\rceil+e
\end{equation}
and the result follows.

Next assume $L$ is almost simple and $c \leqs |\Delta|$. Choose any $c$ distinct points $\delta_1, \ldots, \delta_c$ in $\Delta$ and set $\a_i = (\delta_i, \ldots, \delta_i) \in \O$. Since the pointwise stabilizer in $G$ of these points is in $\Gamma_d$, it follows that the pointwise stabilizer of the $\delta_i$ in $T$ is also in $\Gamma_d$, which in turn implies that $\bigcap_i L_{\delta_i} \in \Gamma_d$ since $L/T$ is solvable. Therefore, every $c$-point stabilizer in $L$ is in $\Gamma_d$, so Proposition \ref{p:fn2} implies that $b(L) \leqs h(c,d)$ for some function $h$, which is linear in $c$ and $d$.

Let $e=\max\{c,b(L)\}$ and choose $e$ distinct points $\{\delta_1, \ldots, \delta_e\} \subseteq \Delta$ containing a base for $L$. Let $R$ be the pointwise stabilizer in $G$ of the elements $\a_i = (\delta_i, \ldots, \delta_i) \in \O$. Then $R \in \Gamma_d$ and by arguing as above we deduce that \eqref{e:prod2} holds. The desired result follows via the bound on $d(R)$ in Theorem \ref{t:dist}(ii) and the fact that $e \leqs h(c,d)+c$.

\vs

\noindent \emph{Case 2. $L$ is diagonal type.}

\vs

Now assume $L$ is a diagonal type group with socle $T=S^{\ell}$, where $\ell \geqs 2$ and $S$ is a nonabelian simple group, so
\[
S^{\ell} \normeq L \leqs S^{\ell}.({\rm Out}(S) \times A)
\]
and $A \leqs S_{\ell}$ is the group induced by the conjugation action of $L$ on the $\ell$ factors of $T$. If $\ell=2$ then $b(L) \leqs 4$ by \cite[Theorem 1.2]{fawcett} and we can repeat the argument in Case 1. For the remainder, we will assume $\ell \geqs 3$. Set $e=\max\{3,c\}$. By arguing as in \cite[Section 4.3.2]{HLM}, we may choose $e$ distinct points in $\O$ with the property that their pointwise stabilizer in $G$ is a permutation group $R \leqs {\rm Sym}(\Lambda)$ with $|\Lambda|=k\ell$ such that
\[
b(G) \leqs 2\frac{\log d(R)}{\log |S|}+e+1
\]
(see \eqref{e:R}). Since $R \in \Gamma_d$ we have $d(R) \leqs d$ by Theorem \ref{t:dist}(ii) and the result follows.
\end{proof}

\vs

This completes the proof of Theorem \ref{t:5}.

\end{document}